
\documentclass[12pt]{article}
\usepackage{amssymb}
\usepackage{amsfonts}
\usepackage{graphicx}
\usepackage{amsmath}
\usepackage{geometry}
\usepackage{amsxtra,latexsym, amscd, enumerate}
\usepackage[mathscr]{eucal}
\usepackage{hyperref}
\usepackage{mathrsfs}
\usepackage{fancyhdr}
\usepackage{subfigure}
\usepackage{authblk}

\setcounter{MaxMatrixCols}{10}

\newtheorem{theorem}{Theorem}

\newtheorem{corollary}[theorem]{Corollary}

\newtheorem{definition}[theorem]{Definition}

\newtheorem{lemma}[theorem]{Lemma}
\newtheorem{notation}[theorem]{Notation}

\newtheorem{proposition}[theorem]{Proposition}
\newtheorem{remark}[theorem]{Remark}

\newenvironment{proof}[1][Proof]{\textbf{#1.} }{\ \rule{0.5em}{0.5em}}
\input{tcilatex}
\geometry{left=2.0cm,right=1.5cm,top=2.0cm,bottom=2.0cm}

\title{Homogenization of the spectral equation in one-dimension}

\author[1]{Thi Trang Nguyen}
\author[1]{Michel Lenczner}
\author[2]{Matthieu Brassart}
\affil[1]{FEMTO-ST Institute, 26 Chemin de l'Epitaphe, 25000 Besan\c
con, France.}
 \affil[2]{Laboratoire de Math\'ematiques de Besan\c
con, 16 Route de Gray, 25030 Besan\c con, France.}

\begin{document}
\maketitle
\begin{abstract}
The asymptotic behavior of a one-dimensional spectral problem with periodic
coefficient is addressed for high frequency modes by a method of Bloch wave
homogenization. The analysis leads to a spectral problem including both
microscopic and macroscopic eigenmodes. Numerical simulation results are
provided to corroborate the theory.

\noindent \textbf{Keywords.} Homogenization, Bloch waves, spectral problem,
two-scale transform.
\end{abstract}

\tableofcontents

\section{Introduction}

We consider the spectral problem%
\begin{equation}
-\partial _{x}\left( {a^{\varepsilon }\partial _{x}}w^{\varepsilon }\right)
=\lambda ^{\varepsilon }\rho ^{\varepsilon }w^{\varepsilon }
\label{intro_diff}
\end{equation}%
posed in an one-dimensional open bounded domain $\Omega \subset
\mathbb{R}
$ with Dirichlet boundary conditions. An asymptotic analysis of this problem
is carried out where $\varepsilon >0$ is a parameter tending to zero and the
coefficients are $\varepsilon $-periodic, namely $a^{\varepsilon }=a\left(
\frac{x}{\varepsilon }\right) $ and $\rho ^{\varepsilon }=\rho \left( \frac{x%
}{\varepsilon }\right) $ where $a\left( y\right) $ and $\rho \left( y\right)
$ are $1$-periodic in $%
\mathbb{R}
$. The homogenization of such spectral problem has been studied in various
works providing the asymptotic behaviour of eigenvalues and eigenvectors.
The low frequency part of the spectrum has been investigated in \cite%
{kesavan1979homogenization1}, \cite{kesavan1979homogenization2}, \cite%
{vanninathan1981homogenization}. Then, many configurations have been
analyzed, as \cite{conca1988spectral} and \cite{conca1993limiting} for a
fluid-structure interaction, \cite{bal2001homogenization}, \cite%
{allaire1999homogenization} for neutron transport, \cite%
{nazarov2011homogenization}, \cite{pankratova2011homogenization} for $\rho $
which changes sign or \cite{allaire2012homogenization} for the first high
frequency eigenvalue and eigenvector for a one-dimensional non-self-adjoint
problem with Neumann boundary conditions. In \cite{allaire1998bloch}, G.
Allaire and C. Conca studied the asymptotic behaviour of both the low and
high frequency spectrum. In order to analyze the asymptotic behaviour of the
high frequency eigenvalues, they used the Bloch wave homogenization method.
It is a blend of two-scale convergence, see e.g.\textbf{\ }\cite%
{allaire1992homogenization}, \cite{allaire1993two}, \cite{lukkassen2002two},
and Bloch wave decomposition, see e.g.\ \cite{conca1997homogenization}, \cite%
{conca2005bloch}, \cite{conca2002bloch}, and was previously introduced in
\cite{allaire1996bloch} to a fluid-solid interaction problem. They have
shown that the limit of the set of renormalized eigenvalues $\varepsilon
^{2}\lambda ^{\varepsilon }$ is the union of the Bloch spectrum and the
boundary layer spectrum, when $\varepsilon $ goes to $0$. However, the
asymptotic behaviour of the corresponding eigenvectors was not addressed.
This is the goal of the present work which focuses on the Bloch spectrum of
the high frequency part. Precisely, we search eigenvalues $\lambda
^{\varepsilon }$ such that
\begin{equation}
\varepsilon ^{2}\lambda ^{\varepsilon }=\lambda _{n}^{k}+\varepsilon \lambda
^{1}+\varepsilon O\left( \varepsilon \right)  \label{intro-eigenvalue}
\end{equation}%
where $\lambda _{n}^{k}$ is solution of the Bloch wave spectral problem,
also called the microscopic equation in this work,%
\begin{equation}
-\partial _{y}\left( a\left( y\right) \partial _{y}\phi _{n}^{k}\left(
y\right) \right) =\lambda _{n}^{k}\rho \left( y\right) \phi _{n}^{k}\left(
y\right) \text{ \ for \ }n\in
\mathbb{N}
^{\ast }  \label{intro_bloch}
\end{equation}%
with $k-$quasi-periodic boundary conditions for some $k\in
\mathbb{R}
$. From \cite{allaire1998bloch}, it is known that each $\lambda _{n}^{k}$
can be reached as a limit of a subsequence of $\varepsilon ^{2}\lambda
^{\varepsilon }$. For each $n\in
\mathbb{N}
^{\ast }$ and each $k,$ $\lambda _{n}^{k}$ is either a simple or a double
eigenvalue\ and $\lambda _{n}^{k}=\lambda _{n}^{-k}$. We pose $I^{k}=\left\{
-k,k\right\} $ if $k\neq 0$ and $I^{k}=\left\{ 0\right\} $ otherwise. To
guarantee that Bloch waves are kept in the weak limit, we apply the
modulated two-scale transform $S_{k}^{\varepsilon }$, defined in \cite%
{brassart2010two} from the usual two-scale transform in \cite%
{lenczner2006homogenization}, \cite{lenczner2007multiscale}, \cite%
{cioranescu2002periodic}, \cite{casado2000two} or \cite%
{cioranescu2008periodic}.\textbf{\ }Passing to the limit in the weak
formulation, it is shown that $\sum\limits_{\sigma \in I^{k}}S_{\sigma
}^{\varepsilon }w^{\varepsilon }$ is weakly converging to two-scale modes%
\begin{equation*}
g_{k}\left( x,y\right) =\sum_{\sigma \in I^{k}}\sum_{m}u_{m}^{\sigma }\left(
x\right) \phi _{m}^{\sigma }\left( y\right)
\end{equation*}%
where the second sum runs over all modes $\phi _{m}^{\sigma }$ with the same
eigenvalue $\lambda _{n}^{k}$.\ Here, the modes $\phi _{m}^{\sigma }$ are
called microscopic modes. The factors $(u_{m}^{\sigma })_{m}$ are solution
of the macroscopic system of first order differential equation,%
\begin{equation}
\sum\limits_{m}c\left( \sigma ,n,m\right) {\partial _{x}u_{m}^{\sigma
}+\lambda }^{1}b\left( \sigma ,n,m\right) {u_{m}^{\sigma }}=0\text{ in }%
\Omega \text{ for each }\sigma \in I^{k},  \label{intro-macro}
\end{equation}%
which boundary conditions and the constant $c\left( \sigma ,n,m\right) $ are
depending on the involved microscopic modes and eigenvalues. The physical
solution $w^{\varepsilon }$ is then approximated by two-scale modes%
\begin{equation}
w^{\varepsilon }\left( x\right) \approx \sum_{\sigma \in
I^{k}}\sum_{m}u_{m}^{\sigma }\left( x\right) \phi _{m}^{\sigma }\left( \frac{%
x}{\varepsilon }\right) .  \label{intro-eigenvector}
\end{equation}%
These results are established for Neumann boundary conditions.

In fact, this method is inspired from \cite{brassart2010two} dedicated to
the wave equation, except that in the latter work the two-scale transforms $%
S_{k}^{\varepsilon }w^{\varepsilon }$ and $S_{-k}^{\varepsilon
}w^{\varepsilon }$ were analyzed separately and the macroscopic boundary
conditions were lacking. Moreover, the model derivation in \cite%
{brassart2010two} is starting from the wave equation written as a first
order system. So, for the sake of comparison, we derive the homogenized
spectral equation from a first order formulation.

In addition, we report exploration results regarding approximations of
physical eigenmodes by two-scale modes. First, for a given $\varepsilon $
and each high frequency physical eigenelement $\left( \lambda ^{\varepsilon
},w^{\varepsilon }\right) $, we show how to find quadruplets $\left( \lambda
_{n}^{k},\lambda _{1},\phi _{n}^{k},u_{n}^{k}\right) _{n,k}$ satisfying the
approximations (\ref{intro-eigenvalue}) and (\ref{intro-eigenvector}). This
shows that each high frequency eigenelement can be approximated by a
two-scale mode.\ Conversely, the high-frequency physical eigenelements can
be built from the two-scale eigenelements only. Namely, for a given Bloch
mode $\left( \lambda _{n}^{k},\phi _{n}^{k}\right) $, a macroscopic
eigenelement $\left( \lambda ^{1},u_{n}^{k}\right) $ is minimizing the error
on the physical equation (\ref{intro_diff}) where $w^{\varepsilon }$ and $%
\lambda ^{\varepsilon }$ are replaced by their approximations (\ref%
{intro-eigenvalue}) and (\ref{intro-eigenvector}).

This paper is organized as follows. In Section \ref{Statement_problem} we
state the physical spectral equation with Dirichlet boundary conditions. In
Section \ref{Notation_properties_2s} the notations and elementary
properties, which are used throughout the paper, are introduced. In Section %
\ref{second} and \ref{first}, the model homogenization is derived based on
the second order and first order formulations respectively. Finally, the
numerical results are reported in the last section.

\section{Statement of the problem}

\label{Statement_problem}

We consider $\Omega =\left( 0,\alpha \right) \subset
\mathbb{R}
^{+}$ an interval, which boundary is denoted by $\partial \Omega $, and two
functions $\left( a^{\varepsilon },\rho ^{\varepsilon }\right) $ assumed to
obey a prescribed profile,
\begin{equation}
a^{\varepsilon }:=a\left( {\frac{x}{\varepsilon }}\right) \,\ \,\ \text{and}%
\,\ \,\ \rho ^{\varepsilon }:=\rho \left( {\frac{x}{\varepsilon }}\right) ,
\label{a2_2}
\end{equation}%
where $\rho \in L^{\infty }\left(
\mathbb{R}
\right) $, $a\in W^{1,\infty }\left(
\mathbb{R}
\right) $ are both $Y$-periodic where $Y$ is an open interval. Moreover,
they are required to satisfy the standard uniform positivity and ellipticity
conditions:
\begin{equation}
\rho ^{0}\leq \rho \leq \rho ^{1}\text{ and }a^{0}\leq a\leq a^{1},
\label{a2_3}
\end{equation}%
for some given strictly positive $\rho ^{0}$, $\rho ^{1}$, $a^{0}$ and $%
a^{1} $.

With the operators $P^{\varepsilon }=-\partial _{x}\left( {a^{\varepsilon
}\partial _{x}.}\right) $, the spectral problem with Dirichlet boundary
conditions is%
\begin{equation}
{P}^{\varepsilon }{w^{\varepsilon }}=\lambda ^{\varepsilon }\rho
^{\varepsilon }w^{\varepsilon }\quad \text{in}\quad \Omega ~\text{ and }%
w^{\varepsilon }=0\quad \text{on}\quad \partial \Omega ,  \label{a2_1}
\end{equation}%
where as usual $\varepsilon >0$ denotes a small parameter intended to go to
zero.

The eigenvectors $w^{\varepsilon }\in $ $H^{2}\left( \Omega \right) \cap
H_{0}^{1}\left( \Omega \right) $ are normalized by
\begin{equation}
\left\Vert w^{\varepsilon }\right\Vert _{L^{2}\left( \Omega \right) }=\left(
\int_{\Omega }\left\vert w^{\varepsilon }\right\vert ^{2}dx\right) ^{\frac{1%
}{2}}=1,  \label{normalization}
\end{equation}
and we search the eigenvalues such that%
\begin{equation}
\varepsilon ^{2}\lambda ^{\varepsilon }=\lambda ^{0}+\varepsilon \lambda
^{1}+\varepsilon O(\varepsilon ),  \label{eigenvalue_decomposition}
\end{equation}%
where $\lambda ^{0}$ is a non negative real number and $O(\varepsilon )$
tends to zero with $\varepsilon $. The weak formulation of the spectral
problem (\ref{a2_1}) is: find $w^{\varepsilon }\in H_{0}^{1}(\Omega )$ such
that%
\begin{equation}
\int_{\Omega }a^{\varepsilon }\partial _{x}w^{\varepsilon }\partial _{x}v%
\text{ }dx=\lambda ^{\varepsilon }\int_{\Omega }\rho ^{\varepsilon
}w^{\varepsilon }v\text{ }dx\text{ \ for all }v\in H_{0}^{1}(\Omega ).
\label{1D-weakformulation1}
\end{equation}%
Since $\varepsilon ^{2}\lambda ^{\varepsilon }$ is bounded, it results the
uniform bound
\begin{equation}
||\varepsilon \partial _{x}w^{\varepsilon }||_{L^{2}(\Omega )}\leq N_{0}%
\text{.}  \label{1D-uniform-estimate}
\end{equation}

\section{Notations and elementary properties}

\label{Notation_properties_2s}

The functional space $L^{2}\left( \Omega \right) $ of square integrable
functions is over $%
\mathbb{C}
$. Let $u=\left( u_{i}\right) _{i}$ and $v=\left( v_{i}\right) _{i}$ be $m$%
-dimensional complex vector valued functions in $L^{2}\left( \Omega \right) $%
, the dot product is denoted by $u.v:=\sum\limits_{i}{u_{i}}v_{i}$ and the
hermitian inner product by%
\begin{equation}
\int_{\Omega }u\cdot v\text{ }dx=\int_{\Omega }u(x).\overline{v(x)}\text{ }%
dx.  \label{hermitian product}
\end{equation}%
The notation $O\left( \varepsilon \right) $ refers to numbers or functions
tending to zeros when $\varepsilon \rightarrow 0$ in a sense made precise in
each case. The notations $\partial _{x}u=\frac{\partial u}{\partial x}%
,\partial _{y}u=\frac{\partial u}{\partial y}$ are for $x-$ and $y-$%
derivatives of a function\ $u$. The vectors $n_{\Omega }$, $n_{Y}$ are the
outer unit normals of $\partial \Omega $ and $\partial Y.$

\textbf{Bloch decomposition} We follow the definition of Bloch decomposition
in \cite{brassart2010two} with $N=1$, $L=\mathbb{%
\mathbb{Z}
}$, and $Y=(0,1)$, so $\mathbb{R=}\overline{Y}+L.$ The dual lattice is
necessarily $L^{\ast }=\mathbb{Z}$, and the equivalence class $Y^{\ast }=%
\mathbb{R}/L^{\ast }$ is chosen as $Y^{\ast }=(-1/2,1/2)$. For $K\in \mathbb{%
N}^{\ast }$, considering the dual lattices $KL=K\mathbb{Z}$ and $L^{\ast }/K=%
\mathbb{Z}/K,$ we pose%
\begin{equation*}
L_{K}=\left\{
\begin{array}{l}
\{-\frac{K}{2},..,\frac{K}{2}-1\}\subset L\text{ if }K\text{ is even,} \\
\{-\frac{K-1}{2},..,\frac{K-1}{2}\}\text{ if }K\text{ is odd,}%
\end{array}%
\right.
\end{equation*}%
so that $L=L_{K}+KL$. Posing $L_{K}^{\ast }=L_{K}/K$ yields $L^{\ast
}/K=L^{\ast }+L_{K}^{\ast }$.

\textbf{Functional spaces of quasi-periodic functions }For any $k\in Y^{\ast
}$, we define the $k-$quasi-periodic $L^{2}-$vector space over $\mathbb{C}$
with the hermitian inner product (\ref{hermitian product}) by
\begin{equation*}
L_{k}^{2}=\{u\in L_{loc}^{2}(\mathbb{R})\text{ }{|}\text{ }u(x+\ell
)=u(x)e^{2i\pi k\ell }\text{ a.e. in }%
\mathbb{R}
\text{ for all }\ell \in L\},
\end{equation*}%
or equivalently
\begin{equation*}
L_{k}^{2}=\{u\in L_{loc}^{2}(\mathbb{R})\text{ }{|}\text{ }\exists v\in
L_{\sharp }^{2}\text{ such that }u(x)=v(x)e^{2i\pi kx}\text{ a.e.}\},
\end{equation*}%
where $L_{\sharp }^{2}$ is the traditional notation for $L_{k}^{2}$ in the
periodic case i.e. when $k=0.$ Likewise, for $s\geq 0$ we set
\begin{equation*}
H_{k}^{s}:=L_{k}^{2}\cap H_{loc}^{s}\left(
\mathbb{R}
\right)
\end{equation*}%
bearing in mind that the subscript $\sharp $ would be more appropriate in
the periodic case $k=0.$

\textbf{The modulated two-scale transform} Let us\textbf{\ }assume from now
that the domain $\Omega $ is the union of a finite number of entire cells of
size $\varepsilon $ or equivalently that the sequence $\varepsilon $ is
exactly $\varepsilon _{n}=\frac{\alpha }{n}$ for $n\in
\mathbb{N}
^{\ast }$. Setting $C_{\varepsilon }:=\left\{ {\omega _{\varepsilon
}=\varepsilon l+\varepsilon Y}\text{ }{|}\text{ }{l\in L,\varepsilon
l+\varepsilon Y\subset \Omega }\right\} $ is the set of all cells of $\Omega
$.

\begin{definition}
\label{def_2scale}For any $k\in Y^{\ast }$, the modulated two-scale
transform of the function $u\in L^{2}\left( \Omega \right) $, $%
S_{k}^{\varepsilon }:L^{2}\left( \Omega \right) \rightarrow L^{2}\left( {%
\Omega \times Y}\right) $ is defined by
\begin{equation}
S_{k}^{\varepsilon }u\left( {x,y}\right) =\sum\limits_{\omega _{\varepsilon
}\in C_{\varepsilon }}{u\left( {\varepsilon l_{\omega _{\varepsilon
}}+\varepsilon y}\right) \chi _{\omega _{\varepsilon }}\left( x\right)
e^{-2i\pi kl_{\omega _{\varepsilon }}}},  \label{a4_1}
\end{equation}%
where $\varepsilon l_{\omega _{\varepsilon }}$ stands for the unique node in
$\varepsilon L$ of $\omega _{\varepsilon }$ and ${\chi _{\omega
_{\varepsilon }}}$ is the characteristic function of $\omega _{\varepsilon }$%
.
\end{definition}

The three following properties can be checked by using (\ref{a4_1}) and are
admitted. For $u,v\in L^{2}\left( \Omega \right) $%
\begin{gather}
\left\Vert {S_{k}^{\varepsilon }u}\right\Vert _{L^{2}\left( {\Omega \times Y}%
\right) }^{2}=\int_{\Omega \times Y}{\left\vert {S_{k}^{\varepsilon }u}%
\right\vert ^{2}dxdy}=\sum_{\varepsilon }\int_{\omega _{\varepsilon }}{%
\left\vert u\right\vert ^{2}dx}=\left\Vert u\right\Vert _{_{L^{2}\left(
\Omega \right) }}^{2}\text{,}  \label{Pseudo_sk} \\
S_{k}^{\varepsilon }(uv)=S_{0}^{\varepsilon }(u)S_{k}^{\varepsilon }(v)\text{%
,}  \notag \\
\text{and }S_{k}^{\varepsilon }\left( \partial _{x}u\right) \left( {x,y}%
\right) =\frac{1}{\varepsilon }\partial _{y}S_{k}^{\varepsilon }u\left( {x,y}%
\right) \text{ for }u\in H^{1}\left( \Omega \right) \text{.}  \notag
\end{gather}

\begin{remark}
\label{Sk_conver}Let $k\in Y^{\ast }$\ and a sequence $u^{\varepsilon }$\
bounded in $L^{2}\left( \Omega \right) $\ such that $S_{k}^{\varepsilon
}u^{\varepsilon }$\ converges to $u^{k}$\ in $L^{2}(\Omega \times Y)$\
weakly when $\varepsilon \rightarrow 0$, then $S_{-k}^{\varepsilon
}u^{\varepsilon }$\ converges to some $u^{-k}$\ in $L^{2}(\Omega \times Y)$
weakly. Moreover, since $S_{k}^{\varepsilon }u^{\varepsilon }$\ and $%
S_{-k}^{\varepsilon }u^{\varepsilon }$\ are conjugate then $u^{k}$\ and $%
u^{-k}$\ are also conjugate.
\end{remark}

The adjoint $S_{k}^{\varepsilon \ast }:L^{2}\left( \Omega \times Y\right)
\rightarrow L^{2}\left( \Omega \right) $ of $S_{k}^{\varepsilon }$, is
defined by
\begin{equation}
\int_{\Omega }{\left( {S_{k}^{\varepsilon \ast }v}\right) \left( x\right)
\cdot w\left( x\right) }\text{ }{dx=\int_{\Omega \times Y}{v\left( {x,y}%
\right) \cdot \left( {S_{k}^{\varepsilon }w}\right) \left( {x,y}\right) }}%
\text{ }{dxdy},  \label{Definition_S*}
\end{equation}%
for all $w\in L^{2}\left( \Omega \right) $ and $v\in L^{2}\left( \Omega
\times Y\right) $, and we denote by $\mathfrak{R}$ the operator operating on
functions $v(x,y)$ defined in $\Omega \times \mathbb{R}$,
\begin{equation}
(\mathfrak{R}v)(x)=v(x,\frac{x}{\varepsilon })\text{.}  \label{R}
\end{equation}%
The next Lemma shows that $\mathfrak{R}$ is an approximation of $%
S_{k}^{\varepsilon \ast }$ for $k-$quasi-periodic functions.

\begin{lemma}
\label{1D-Approx R-1}Let $v\in C^{1}\left( \Omega \times Y\right) $ a $k-$%
quasi-periodic function in $y$ then
\begin{equation}
S_{k}^{\varepsilon \ast }v=\mathfrak{R}v+O\left( \varepsilon \right) \quad
\text{in the }L^{2}\left( \Omega \right) \text{ sense}.  \label{S*}
\end{equation}
\end{lemma}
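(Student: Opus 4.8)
The plan is to obtain an explicit pointwise formula for $S_{k}^{\varepsilon\ast}v$ on each cell $\omega_{\varepsilon}$ from the defining identity (\ref{Definition_S*}), to rewrite $\mathfrak{R}v$ cellwise using the $k$-quasi-periodicity of $v$ in $y$, and then to estimate the difference with the $C^{1}$ regularity of $v$.

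First I would plug the definition (\ref{a4_1}) of $S_{k}^{\varepsilon}w$ into the right-hand side of (\ref{Definition_S*}) for an arbitrary $w\in L^{2}(\Omega )$, split the integral over the cells $\omega _{\varepsilon }\in C_{\varepsilon }$, and on each cell perform the change of variable $t=\varepsilon l_{\omega _{\varepsilon }}+\varepsilon y$ in the $y$-integration (so that $dy=\varepsilon ^{-1}dt$ and $y=t/\varepsilon -l_{\omega _{\varepsilon }}\in Y$). Exchanging the $x$- and $t$-integrations and matching with $\int_{\Omega }(S_{k}^{\varepsilon \ast }v)(t)\cdot w(t)\,dt$, one reads off
\begin{equation*}
(S_{k}^{\varepsilon \ast }v)(t)=e^{2i\pi kl_{\omega _{\varepsilon }}}\,\frac{1}{\varepsilon }\int_{\omega _{\varepsilon }}v\Big( x,\frac{t}{\varepsilon }-l_{\omega _{\varepsilon }}\Big) \,dx\qquad \text{for }t\in \omega _{\varepsilon },
\end{equation*}
that is, on each cell $S_{k}^{\varepsilon \ast }v$ is the $x$-average over that cell of $v(\cdot ,y_{t})$, evaluated at the local variable $y_{t}:=t/\varepsilon -l_{\omega _{\varepsilon }}$, multiplied by the cell phase.

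Next, for $t\in \omega _{\varepsilon }$ I would write $t/\varepsilon =l_{\omega _{\varepsilon }}+y_{t}$ with $y_{t}\in Y$ and invoke the $k$-quasi-periodicity $v(x,y+\ell )=v(x,y)e^{2i\pi k\ell }$ with $\ell =l_{\omega _{\varepsilon }}\in L$ to get $(\mathfrak{R}v)(t)=v(t,t/\varepsilon )=e^{2i\pi kl_{\omega _{\varepsilon }}}v(t,y_{t})$. Subtracting, the cell phases cancel and
\begin{equation*}
(S_{k}^{\varepsilon \ast }v-\mathfrak{R}v)(t)=e^{2i\pi kl_{\omega _{\varepsilon }}}\,\frac{1}{\varepsilon }\int_{\omega _{\varepsilon }}\big[ v(x,y_{t})-v(t,y_{t})\big] \,dx,\qquad t\in \omega _{\varepsilon }.
\end{equation*}
Since $x$ and $t$ both lie in the cell $\omega _{\varepsilon }$, which has length $\varepsilon $, the mean value inequality gives $|v(x,y_{t})-v(t,y_{t})|\leq \varepsilon \Vert \partial _{x}v\Vert _{L^{\infty }(\Omega \times Y)}$, which is finite because $\Omega $ is bounded and $v\in C^{1}$ with modulus periodic in $y$. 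Hence $|(S_{k}^{\varepsilon \ast }v-\mathfrak{R}v)(t)|\leq \varepsilon \Vert \partial _{x}v\Vert _{L^{\infty }}$ for a.e. $t\in \Omega $, and integrating over $\Omega $ yields $\Vert S_{k}^{\varepsilon \ast }v-\mathfrak{R}v\Vert _{L^{2}(\Omega )}\leq \varepsilon \,|\Omega |^{1/2}\Vert \partial _{x}v\Vert _{L^{\infty }}=O(\varepsilon )$, which is (\ref{S*}).

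The only delicate point is the bookkeeping in the first step: correctly tracking the conjugate phase $e^{+2i\pi kl_{\omega _{\varepsilon }}}$ coming from the Hermitian pairing and the Jacobian $\varepsilon ^{-1}$ of the rescaling, and making sure $\partial _{x}v$ is genuinely bounded on $\Omega \times Y$. Everything after that is a one-line Taylor estimate on a cell of diameter $\varepsilon $ followed by integration over $\Omega $.
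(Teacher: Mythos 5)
Your proof is correct and follows essentially the same route as the paper: derive the explicit cellwise formula for $S_{k}^{\varepsilon\ast}v$ as a phase times the cell average of $v(\cdot,y_{t})$, then compare with $\mathfrak{R}v$ via the $k$-quasi-periodicity and the $C^{1}$ regularity. Your use of the mean-value inequality $|v(x,y_{t})-v(t,y_{t})|\leq \varepsilon \Vert \partial _{x}v\Vert _{L^{\infty }}$ is in fact a slightly cleaner version of the paper's Taylor-expansion bookkeeping, and yields the same $O(\varepsilon )$ bound in $L^{2}(\Omega )$.
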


\begin{proof}
The proof is carried out in two steps. First the explicit expression of $%
S_{k}^{\varepsilon \ast }v$ is derived, then the approximation is deduced.

(i) Let us prove that%
\begin{equation*}
({S_{k}^{\varepsilon \ast }}v)\left( x\right) =\sum\limits_{\omega
_{\varepsilon }\in C_{\varepsilon }}{\varepsilon ^{-1}\int_{\omega
_{\varepsilon }}{v\left( {z,\frac{{x-\varepsilon l_{\omega _{\varepsilon }}}%
}{\varepsilon }}\right) }}\text{ }{dz}\text{ }{{\chi _{\omega _{\varepsilon
}}\left( x\right) }}e^{2i\pi kl_{\omega _{\varepsilon }}}.
\end{equation*}%
From the definition of the two-scale transform with $r=\varepsilon l_{\omega
_{\varepsilon }}+\varepsilon y\in \omega _{\varepsilon }$,%
\begin{equation*}
\int_{\Omega \times Y}{v\left( {x,y}\right) \cdot \left( {S_{k}^{\varepsilon
}w}\right) \left( {x,y}\right) dxdy}=\sum\limits_{\omega _{\varepsilon }\in
C_{\varepsilon }}{\int_{\Omega \times \omega _{\varepsilon }}{\varepsilon
^{-1}v\left( {x,\frac{{r-\varepsilon l_{\omega _{\varepsilon }}}}{%
\varepsilon }}\right) \cdot w\left( r\right) {\chi _{\omega _{\varepsilon
}}\left( x\right) }e^{-2i\pi kl_{\omega _{\varepsilon }}}}}\text{ }{dxdr}
\end{equation*}%
or equivalently,
\begin{equation*}
=\int_{\Omega }{\sum\limits_{\omega _{\varepsilon }\in C_{\varepsilon }}{%
\varepsilon ^{-1}\int_{\omega _{\varepsilon }}{v\left( {x,\frac{{%
r-\varepsilon l_{\omega _{\varepsilon }}}}{\varepsilon }}\right) }}}\text{ }{%
{dx\cdot w\left( r\right) }\chi _{\omega _{\varepsilon }}\left( r\right) }%
e^{-2i\pi kl_{\omega _{\varepsilon }}}\text{ }dr.
\end{equation*}%
Changing the variable names and using the definition of ${S_{k}^{\varepsilon
\ast }}$,
\begin{equation*}
\int_{\Omega }{\left( {S_{k}^{\varepsilon \ast }v}\right) \left( x\right)
\cdot w\left( x\right) }\text{ }{dx}=\int_{\Omega }{\sum\limits_{\omega
_{\varepsilon }\in C_{\varepsilon }}{\varepsilon ^{-1}\int_{\omega
_{\varepsilon }}{v\left( {z,\frac{{x-\varepsilon l_{\omega _{\varepsilon }}}%
}{\varepsilon }}\right) dze^{2i\pi k{{{{l_{\omega _{\varepsilon }}}}}}}\cdot
w\left( x\right) }\chi _{\omega _{\varepsilon }}\left( x\right) }}\text{ }dx.
\end{equation*}%
This establishes the explicit expression of ${S_{k}^{\varepsilon \ast }}$.

(ii) Let us derive the expected approximation for $v\in C^{1}\left( \Omega
\times Y\right) $ and $k-$quasi-periodic in $y$.\ Since $\varepsilon
\left\vert Y\right\vert =\left\vert \omega _{\varepsilon }\right\vert $ and
\begin{equation*}
v\left( {z,y}\right) =v\left( {x,y}\right) +\partial _{x}v\left( x,y\right)
\left( z-x\right) +\varepsilon O\left( \varepsilon \right) \text{ in }%
L^{2}(\Omega )\text{ for a.e. }y\in Y
\end{equation*}%
then%
\begin{equation*}
({S_{k}^{\varepsilon \ast }}v)\left( {\varepsilon l_{\omega _{\varepsilon
}}+\varepsilon y}\right) =\frac{1}{\left\vert \omega _{\varepsilon
}\right\vert }\int_{\omega _{\varepsilon }}v\left( {x,y}\right) +\partial
_{x}v\left( x,y\right) \left( z-x\right) \text{ }{dz}\text{ }e^{2i\pi
kl_{\omega _{\varepsilon }}}+O\left( \varepsilon \right)
\end{equation*}%
for a.e. $y\in Y$ and all $\omega _{\varepsilon }\in C_{\varepsilon }$.
Remarking that $z-x=\left( {z-\varepsilon l_{\omega _{\varepsilon }}}\right)
+\left( {\varepsilon l_{\omega _{\varepsilon }}-x}\right) $ and
\begin{equation*}
\int_{\omega _{\varepsilon }}{\left( {z-\varepsilon l_{\omega _{\varepsilon
}}}\right) dz}=-\frac{1}{2}\varepsilon O\left( \varepsilon \right) .
\end{equation*}%
So for all $\omega _{\varepsilon }$ and $y\in Y$,
\begin{equation*}
e^{-2i\pi kl_{\omega _{\varepsilon }}}\left\vert \omega _{\varepsilon
}\right\vert ({S_{k}^{\varepsilon \ast }}v)\left( {\varepsilon l_{\omega
_{\varepsilon }}+\varepsilon y}\right) =\left\vert \omega _{\varepsilon
}\right\vert v\left( {x,y}\right) +(-\frac{1}{2}\varepsilon O\left(
\varepsilon \right) +\left( {\varepsilon ^{2}y}\right) )\partial _{x}v\left(
{x,y}\right) +\varepsilon O\left( \varepsilon \right) .
\end{equation*}%
Therefore,
\begin{equation*}
({S_{k}^{\varepsilon \ast }}v)\left( x\right) =\sum\limits_{\omega
_{\varepsilon }}v\left( {x,\frac{x}{\varepsilon }-l_{\omega _{\varepsilon }}}%
\right) {{{\chi _{\omega _{\varepsilon }}}}}\left( x\right) e^{2i\pi
kl_{\omega _{\varepsilon }}}+O\left( \varepsilon \right) .
\end{equation*}%
Using the $k-$quasi-periodic of $v$ in $y$,%
\begin{equation*}
({S_{k}^{\varepsilon \ast }}v)\left( x\right) =\sum\limits_{\omega
_{\varepsilon }}{v\left( {x,\frac{x}{\varepsilon }}\right) {{\chi _{\omega
_{\varepsilon }}}}\left( x\right) +O\left( \varepsilon \right) }
\end{equation*}%
in $L^{2}(\Omega )$, hence the formula (\ref{S*}) follows.
\end{proof}

In the proof, we constantly use the following consequence.

\begin{corollary}
\label{two-scale-converge}Let $v\in C^{1}\left( \Omega \times Y\right) $ and
$k-$quasi-periodic in $y$, for any sequence $u^{\varepsilon }$ bounded in $%
L^{2}\left( \Omega \right) $ such that $S_{k}^{\varepsilon }u^{\varepsilon }$
converges to $u$ in $L^{2}(\Omega \times Y)$ weakly when $\varepsilon
\rightarrow 0$ then
\begin{equation*}
\int_{\Omega }u^{\varepsilon }\cdot \mathfrak{R}v\text{ }dx\rightarrow
\int_{\Omega \times Y}u\cdot v\text{ }dxdy\text{ \ when }\varepsilon
\rightarrow 0.
\end{equation*}
\end{corollary}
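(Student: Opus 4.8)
The plan is to reduce the statement to the defining adjoint identity (\ref{Definition_S*}) together with the approximation $\mathfrak{R}v=S_{k}^{\varepsilon \ast }v+O(\varepsilon )$ furnished by Lemma \ref{1D-Approx R-1}, and then to use the assumed weak convergence of $S_{k}^{\varepsilon }u^{\varepsilon }$ tested against the fixed function $v$. The key observation is that $v\in C^{1}(\Omega \times Y)$ does not depend on $\varepsilon $, so it is a legitimate fixed element of $L^{2}(\Omega \times Y)$ (the domain $\Omega \times Y$ being bounded), and hence an admissible test function for the weak limit.

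First I would split
\begin{equation*}
\int_{\Omega }u^{\varepsilon }\cdot \mathfrak{R}v\text{ }dx=\int_{\Omega }u^{\varepsilon }\cdot S_{k}^{\varepsilon \ast }v\text{ }dx+\int_{\Omega }u^{\varepsilon }\cdot \left( \mathfrak{R}v-S_{k}^{\varepsilon \ast }v\right) \text{ }dx.
\end{equation*}
The last term is handled by the Cauchy--Schwarz inequality: the factor $\Vert u^{\varepsilon }\Vert _{L^{2}(\Omega )}$ is bounded by hypothesis, while $\Vert \mathfrak{R}v-S_{k}^{\varepsilon \ast }v\Vert _{L^{2}(\Omega )}=O(\varepsilon )\rightarrow 0$ by Lemma \ref{1D-Approx R-1} (whose $O(\varepsilon )$ is precisely an $L^{2}(\Omega )$-norm estimate). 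Thus this term vanishes as $\varepsilon \rightarrow 0$.

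For the first term, using the hermitian character of the inner product and the definition (\ref{Definition_S*}) of the adjoint,
\begin{equation*}
\int_{\Omega }u^{\varepsilon }\cdot S_{k}^{\varepsilon \ast }v\text{ }dx=\overline{\int_{\Omega }S_{k}^{\varepsilon \ast }v\cdot u^{\varepsilon }\text{ }dx}=\overline{\int_{\Omega \times Y}v\cdot S_{k}^{\varepsilon }u^{\varepsilon }\text{ }dxdy}=\int_{\Omega \times Y}S_{k}^{\varepsilon }u^{\varepsilon }\cdot v\text{ }dxdy.
\end{equation*}
Since $S_{k}^{\varepsilon }u^{\varepsilon }$ converges weakly to $u$ in $L^{2}(\Omega \times Y)$ and $v$ is a fixed element of $L^{2}(\Omega \times Y)$, the right-hand side converges to $\int_{\Omega \times Y}u\cdot v\text{ }dxdy$. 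Combining the two terms gives the claim.

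I expect no genuine obstacle in this argument; it is essentially a one-line consequence of the preceding Lemma. The only points requiring a little care are bookkeeping the complex conjugations so that the adjoint identity (\ref{Definition_S*}) is applied in the correct direction, and noting that the error in Lemma \ref{1D-Approx R-1} is measured in the $L^{2}(\Omega )$-norm, which is exactly what is needed to absorb it against the merely bounded (not strongly convergent) sequence $u^{\varepsilon }$.
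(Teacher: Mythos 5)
Your argument is correct and is exactly the intended deduction: the paper presents the corollary as an immediate consequence of Lemma \ref{1D-Approx R-1} without writing out the details, and the details are precisely your two steps (absorbing the $O(\varepsilon)$ discrepancy between $\mathfrak{R}v$ and $S_{k}^{\varepsilon \ast }v$ against the bounded sequence via Cauchy--Schwarz, then invoking the adjoint identity and weak convergence). The conjugation bookkeeping is also handled correctly.
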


Note that for $k=0$, this corresponds to the definition of two-scale
convergence in \cite{allaire1992homogenization} and\textbf{\ }\cite%
{nguetseng1989general}.

\textbf{Two-scale operators }For a function $v(x,y)$ defined in $\Omega
\times \mathbb{R},$ we pose%
\begin{equation*}
P^{0}v=-\partial _{x}\left( a\partial _{x}v\right) ,\text{ }P^{1}v=-\partial
_{x}\left( a\partial _{y}v\right) -\partial _{y}\left( a\partial
_{x}v\right) \text{ and }P^{2}v=-\partial _{y}\left( a\partial _{y}v\right) ,
\end{equation*}%
so that%
\begin{equation}
P^{\varepsilon }\mathfrak{R}v=\sum_{n=0}^{2}\varepsilon ^{-n}\mathfrak{R}%
P^{n}v.  \label{1D-A R v}
\end{equation}

\textbf{Bloch waves }For a given $k\in Y^{\ast }$, we denote by $(\lambda
_{n}^{k},\phi _{n}^{k})$ the Bloch wave eigenelements indexed by $n\in
\mathbb{N}
^{\ast }$ that are solution to%
\begin{equation}
\mathcal{P}(k):P^{2}\phi _{n}^{k}=\lambda _{n}^{k}\rho \phi _{n}^{k}\text{
in }Y\text{ with }\phi _{n}^{k}\in H_{k}^{2}(Y)\text{ and }\left\Vert \phi
_{n}^{k}\right\Vert _{L^{2}\left( Y\right) }=1.  \label{Blochwaves}
\end{equation}%
The corresponding weak formulation is: find $\phi _{n}^{k}\in H_{k}^{1}(Y)$
solution to
\begin{equation}
\int_{Y}a\partial _{y}\phi _{n}^{k}\cdot \partial _{y}v-\lambda _{n}^{k}\rho
\phi _{n}^{k}\cdot v\text{ }dy=0\text{ for all }v\in H_{k}^{1}(Y).
\label{weak_bloch_waves}
\end{equation}%
Since the operator $P^{2}:H_{k}^{2}(Y)\subset L_{k}^{2}(Y)\rightarrow
L_{k}^{2}(Y)$ is self-adjoint, its spectra is real. Furthermore, for $n,m\in
\mathbb{N}
^{\ast }$, we introduce the coefficients%
\begin{equation}
c(k,n,m)=\int_{Y}a\partial _{y}\phi _{m}^{k}\cdot \phi _{n}^{k}-\phi
_{m}^{k}\cdot a\partial _{y}\phi _{n}^{k}\text{ }dy\text{ and }%
b(k,n,m)=\int_{Y}\rho \phi _{m}^{k}\cdot \phi _{n}^{k}\text{ }dy
\label{def of b and c}
\end{equation}%
and observe that the following properties hold,%
\begin{equation*}
c(k,n,m)=\overline{c(-k,n,m)},\text{ }c(k,m,n)=-\overline{c(k,n,m)},\text{ }%
c(k,n,m)=-c(-k,m,n)
\end{equation*}%
and%
\begin{equation*}
b(k,n,m)=\overline{b(k,m,n)},\text{ }b(k,n,m)=\overline{b(-k,m,n)},\text{ }%
b\left( k,n,n\right) >0.
\end{equation*}%
In particular for $k=0$, if the eigenvectors are chosen as real functions
thus $c\left( 0,n,n\right) =0.$ In the special case $\rho =1$, $b(k,n,m)=1$
for $n=m$ and $b(k,n,m)=0$ otherwise.

\begin{notation}
\label{Conjugate of a Bloch mode}For $k\neq 0$, $\overline{\phi _{n}^{k}}\in
H_{-k}^{2}(Y)$, the conjugate of $\phi _{n}^{k}$, is solution of $\mathcal{P}%
(-k)$. We choose the numbering of eigenvectors $\phi _{n}^{-k}$ so that $%
\phi _{n}^{-k}=\overline{\phi _{n}^{k}}$ and remark that $\lambda
_{n}^{-k}=\lambda _{n}^{k}.$
\end{notation}

\begin{remark}
\label{Bloch eigenvalue property}In one dimension, for $k\in Y^{\ast },$ it
is well-known that all eigenvalue $\lambda _{n}^{k}$ are simple, except for $%
k=0$ where they are double.
\end{remark}

Finally, we denote%
\begin{equation*}
I^{k}=\left\{ k,-k\right\} \text{ if }k\in Y^{\ast }\diagdown \left\{
0\right\} \text{ and }I^{0}=\left\{ 0\right\} \text{ otherwise.}
\end{equation*}

\section{Homogenization of the high-frequency eigenvalue problem \label%
{second}}

For $k\in Y^{\ast }$, we decompose
\begin{equation}
\frac{\alpha k}{\varepsilon }=h_{\varepsilon }^{k}+l_{\varepsilon }^{k}\text{
with }h_{\varepsilon }^{k}=\left[ \frac{\alpha k}{\varepsilon }\right] \text{
and }l_{\varepsilon }^{k}\in \left[ 0,1\right) ,  \label{epsilon_m}
\end{equation}%
and assume that the sequence of the $\varepsilon $ is varying in a set $%
E_{k}\subset \mathbb{R}^{+\ast }$ depending on $k$ so that%
\begin{equation}
l_{\varepsilon }^{k}\rightarrow l^{k}\text{ when }\varepsilon \rightarrow 0%
\text{ and }\varepsilon \in E_{k}\text{ with }l^{k}\in \left[ 0,1\right) .
\label{l}
\end{equation}%
We note that for $k=0$, $h_{\varepsilon }^{k}=0,$ $l_{\varepsilon }^{k}=0$,
so $l^{k}=0$ and $E_{0}=\mathbb{R}^{+\ast }$.

\subsection{Main result \label{result}}

The macroscopic equation is stated for each $k\in Y^{\ast }$ and each Bloch
wave eigenvalue $\lambda _{n}^{k}$. For $k\neq 0$, we assume that $c\left(
\sigma ,n,n\right) \neq 0$ for each $\sigma \in I^{k}$, so it is stated as
an eigenvalue problem%
\begin{equation}
c\left( \sigma ,n,n\right) {\partial _{x}u_{n}^{\sigma }+\lambda }%
^{1}b\left( \sigma ,n,n\right) {u_{n}^{\sigma }}=0\text{ \ in \ }\Omega
\label{marco_1}
\end{equation}%
for each $\sigma $, with the boundary conditions%
\begin{equation}
\sum_{\sigma \in I^{k}}{u_{n}^{\sigma }\left( x\right) \phi _{n}^{\sigma
}\left( 0\right) e^{sign\left( \sigma \right) 2i\pi \frac{l^{k}{x}}{\alpha }}%
}=0\text{ on }x\in \partial \Omega ,  \label{boundary_macro_1}
\end{equation}%
where $l^{k}$ is defined in (\ref{l}). We observe that the first order
operator $c\left( k,n,n\right) \left(
\begin{array}{cc}
\partial _{x} & 0 \\
0 & -\partial _{x}%
\end{array}%
\right) $ of this system is self-adjoint on the domain%
\begin{equation*}
D^{k}=\left\{ \left( u_{n},v_{n}\right) \in H^{1}\left( \Omega \right) ^{2}%
\text{ satisfying (\ref{boundary_macro_1})}\right\}
\end{equation*}%
so $\lambda ^{1}$ is real.

For $k=0$, assuming that $\lambda _{n}^{0}$ is a double eigenvalue
corresponding to two eigenvectors $\phi _{n}^{0}$ and $\phi _{m}^{0}$, and
that $c\left( 0,n,m\right) \neq 0$, the macroscopic system states%
\begin{equation}
\sum\limits_{q\in \left\{ n,m\right\} }c\left( 0,p,q\right) {\partial
_{x}u_{q}^{0}+\lambda }^{1}b\left( 0,p,q\right) {u_{q}^{0}}=0\text{ in\ }%
\Omega \text{ for }p\in \left\{ n,m\right\} ,  \label{marco_1_0}
\end{equation}%
with the boundary conditions%
\begin{equation}
\sum\limits_{q\in \left\{ n,m\right\} }{u_{q}^{0}\left( x\right) \phi
_{q}^{0}\left( 0\right) }=0\text{ on }x\in \partial \Omega \text{.}
\label{boundary_macro_1_0}
\end{equation}%
Again $\lambda ^{1}\in \mathbb{R}$ since $c\left( 0,n,m\right) \left(
\begin{array}{cc}
0 & \partial _{x} \\
-\partial _{x} & 0%
\end{array}%
\right) $ is self-adjoint on%
\begin{equation*}
D^{0}=\left\{ \left( u_{n},u_{m}\right) \in H^{1}\left( \Omega \right) ^{2}%
\text{ satisfying (\ref{boundary_macro_1_0})}\right\} \text{.}
\end{equation*}

\begin{remark}
(i) If $c\left( k,n,n\right) =0$ for $k\neq 0$ or $c\left( 0,p,q\right) =0$
for all $p,q$ varying in $\left\{ n,m\right\} ,$ the macroscopic equations (%
\ref{marco_1}) or (\ref{marco_1_0}) are $\lambda ^{1}=0$ or $u=\left(
u_{n}^{\sigma }\right) _{n,\sigma }=0$. But $u=0$ is impossible since $%
\left\Vert w^{\varepsilon }\right\Vert _{L^{2}\left( \Omega \right) }=1$ for
all eigenmodes $w^{\varepsilon }$. So $\lambda ^{1}=0$ and this model does
not provide any equation for $u_{n}^{\sigma }$.

(ii) For $k\neq 0$, if ${\phi _{m}^{k}}\left( 0\right) =0$ then ${\phi
_{m}^{k}}\left( 1\right) =0$ and $\phi _{m}^{k}$ is a periodic solution that
is a solution of $k=0$. So, we consider always that $\phi _{m}^{k}\left(
0\right) \neq 0$ for $k\neq 0$.

(iii) For $k=0$, in case where $\phi _{n}(0)=\phi _{m}(0)=0$ the boundary
conditions of the macroscopic equation vanishes.
\end{remark}

\begin{remark}
\label{remark_Bloch_wave}This work focuses on the Bloch spectrum.\ To avoid
eigenmodes related to the boundary spectrum, according to Proposition 7.7 in
\cite{allaire1998bloch} we shall assume that the weak limit of $%
S_{k}^{\varepsilon }w^{\varepsilon }$\ in $L^{2}\left( \Omega
;H^{1}(Y)\right) $ is not vanishing.
\end{remark}

The main Theorem states as follows.

\begin{theorem}
\label{a7_71} For $k\in Y^{\ast },$ let $\left( \lambda ^{\varepsilon
},w^{\varepsilon }\right) $ be solution of (\ref{a2_1}) then $%
\sum\limits_{\sigma \in I^{k}}S_{\sigma }^{\varepsilon }w^{\varepsilon }$ is
bounded in $L^{2}\left( \Omega ;H^{1}(Y)\right) $. For $\varepsilon \in
E_{k} $, as in (\ref{epsilon_m}, \ref{l}), assuming that the weak limit of $%
S_{k}^{\varepsilon }w^{\varepsilon }$ in $L^{2}\left( \Omega
;H^{1}(Y)\right) $\ is non-vanishing and the renormalized sequence $%
\varepsilon ^{2}\lambda ^{\varepsilon }$\ satisfies the decomposition (\ref%
{eigenvalue_decomposition}), there exists $n\in
\mathbb{N}
^{\ast }$\ such that $\lambda ^{0}=\lambda _{n}^{k}$\ with $\lambda _{n}^{k}$%
\ an\ eigenvalue of the Bloch wave spectrum and the limit $g_{k}$ of any
weakly converging extracted subsequence of $\sum\limits_{\sigma \in
I^{k}}S_{\sigma }^{\varepsilon }w^{\varepsilon }$ in $L^{2}\left( \Omega
;H^{1}(Y)\right) $ can be decomposed on the Bloch modes%
\begin{equation}
g_{k}\left( {x,y}\right) =\sum_{\sigma \in I^{k}}{u_{n}^{\sigma }\left(
x\right) \phi _{n}^{\sigma }\left( y\right) }\text{ for }k\neq 0\text{ and }%
g_{0}\left( {x,y}\right) =\sum_{q\in \{n,m\}}{u_{q}^{0}\left( x\right) \phi
_{q}^{0}\left( y\right) }\text{ otherwise}  \label{appro_theorem}
\end{equation}%
Moreover, $u_{m}^{\sigma }\in H^{1}(\Omega )$ and $\left( u_{m}^{\sigma
}\right) _{m,\sigma }$ are solutions of the macroscopic equations (\ref%
{marco_1}, \ref{boundary_macro_1}) and (\ref{marco_1_0}, \ref%
{boundary_macro_1_0}). Finally, $u_{m}^{k}$ and $u_{m}^{-k}$ are conjugate.
\end{theorem}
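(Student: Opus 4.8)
The plan is to run the Bloch two-scale homogenization scheme: compactness, extraction of limits, passage to the limit in the weak formulation tested against Bloch modes, and reading off the microscopic/macroscopic structure. \textbf{A priori bounds and compactness.} First I would get the bound for free from (\ref{Pseudo_sk}) and the derivative rule for $S_k^\varepsilon$: $\|S_\sigma^\varepsilon w^\varepsilon\|_{L^2(\Omega\times Y)}=\|w^\varepsilon\|_{L^2(\Omega)}=1$ and $\|\partial_y S_\sigma^\varepsilon w^\varepsilon\|_{L^2(\Omega\times Y)}=\|\varepsilon\partial_x w^\varepsilon\|_{L^2(\Omega)}\le N_0$ by (\ref{1D-uniform-estimate}), hence $\sum_{\sigma\in I^k}S_\sigma^\varepsilon w^\varepsilon$ is bounded in $L^2(\Omega;H^1(Y))$. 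Then I would extract a subsequence with $S_\sigma^\varepsilon w^\varepsilon\rightharpoonup w^\sigma$ in $L^2(\Omega;H^1(Y))$ for each $\sigma\in I^k$; since $w^\varepsilon$ is real, Remark \ref{Sk_conver} gives $w^{-k}=\overline{w^k}$, and the candidate for $g_k$ is $\sum_{\sigma\in I^k}w^\sigma$. A point to keep in mind is that $S_\sigma^\varepsilon w^\varepsilon(x,\cdot)$ is only in $H^1(Y)$, yet its weak limit $w^\sigma(x,\cdot)$ becomes genuinely $\sigma$-quasi-periodic; this is the specific feature of the modulated two-scale transform and is what I would have to justify (or borrow from \cite{allaire1998bloch}, \cite{brassart2010two}).

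\textbf{The master identity.} Next I would test (\ref{1D-weakformulation1}) against $v=\mathfrak{R}(\theta\overline{\phi_p^\sigma})\in H_0^1(\Omega)$, for $\phi_p^\sigma$ a Bloch mode, $\sigma\in I^k$, and $\theta\in C_c^\infty(\Omega)$ real. Expanding $\partial_x v$ through (\ref{Pseudo_sk}), integrating by parts in $x$ and using the cell equation (\ref{Blochwaves}), i.e. $\partial_y(a\partial_y\overline{\phi_p^\sigma})=-\lambda_p^\sigma\rho\overline{\phi_p^\sigma}$, one eliminates the $\varepsilon^{-2}$ term and is left with
\[
\int_\Omega a^\varepsilon\partial_x w^\varepsilon\,\mathfrak{R}(\theta'\overline{\phi_p^\sigma})\,dx-\frac{1}{\varepsilon}\int_\Omega w^\varepsilon\,\mathfrak{R}(a\theta'\partial_y\overline{\phi_p^\sigma})\,dx=\Bigl(\lambda^\varepsilon-\frac{\lambda_p^\sigma}{\varepsilon^{2}}\Bigr)\int_\Omega\rho^\varepsilon w^\varepsilon v\,dx .
\]
Multiplying by $\varepsilon^2$ makes the left side $O(\varepsilon)$, while $\varepsilon^2\lambda^\varepsilon-\lambda_p^\sigma\to\lambda^0-\lambda_p^\sigma$ and, by Corollary \ref{two-scale-converge}, $\int_\Omega\rho^\varepsilon w^\varepsilon v\,dx\to\int_\Omega\theta(x)\int_Y\rho\,w^\sigma\overline{\phi_p^\sigma}\,dy\,dx$. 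With $\theta$ arbitrary this gives
\[
(\lambda^0-\lambda_p^\sigma)\int_Y\rho(y)\,w^\sigma(x,y)\,\overline{\phi_p^\sigma(y)}\,dy=0\quad\text{for a.e.\ }x\in\Omega\ \text{and all }p .
\]
Since $(\phi_p^\sigma)_p$ is a complete $\rho$-orthogonal family in $L_\sigma^2(Y)$, $w^\sigma(x,\cdot)$ lies in the $\lambda^0$-eigenspace of $\mathcal{P}(\sigma)$ for a.e.\ $x$; the non-vanishing hypothesis (Remark \ref{remark_Bloch_wave}) excludes $w^k\equiv0$, so $\lambda^0$ must be a Bloch eigenvalue $\lambda_n^k$, and Remark \ref{Bloch eigenvalue property} (eigenspace one-dimensional for $k\ne0$, two-dimensional for $k=0$) gives the decomposition (\ref{appro_theorem}) with scalar factors $u_n^\sigma$ (resp. $u_q^0$).

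\textbf{Macroscopic equations, boundary conditions, conjugacy.} To reach the equation for the $u$'s I would take in the same identity a mode $\phi_p^\sigma$ from the eigenspace ($\lambda_p^\sigma=\lambda^0$) and multiply by $\varepsilon$ only; then $\varepsilon(\lambda^\varepsilon-\varepsilon^{-2}\lambda^0)=\lambda^1+o(1)$, and passing to the limit with Corollary \ref{two-scale-converge} (using $S_\sigma^\varepsilon(\varepsilon\partial_x w^\varepsilon)=\partial_y S_\sigma^\varepsilon w^\varepsilon\rightharpoonup\partial_y w^\sigma$) and inserting the decomposition of $w^\sigma$, the $Y$-integrals reproduce exactly the coefficients (\ref{def of b and c}), giving
\[
\int_\Omega\theta'(x)\sum_m c(\sigma,p,m)\,u_m^\sigma(x)\,dx=\lambda^1\int_\Omega\theta(x)\sum_m b(\sigma,p,m)\,u_m^\sigma(x)\,dx\qquad\text{for all }\theta\in C_c^\infty(\Omega),
\]
which is (\ref{marco_1}) (resp. (\ref{marco_1_0})) in $\mathcal{D}'(\Omega)$; since the right-hand side lies in $L^2$ and the matrix $\bigl(c(\sigma,p,m)\bigr)$ is invertible under the standing assumption $c(\sigma,n,n)\ne0$ (resp. $c(0,n,m)\ne0$), I obtain $u_m^\sigma\in H^1(\Omega)$, and $\lambda^1\in\mathbb{R}$ from the self-adjointness recorded before the theorem. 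The boundary conditions (\ref{boundary_macro_1}), (\ref{boundary_macro_1_0}) I would read off by evaluating the representation $w^\varepsilon(x)\approx\sum_\sigma u_n^\sigma(x)\phi_n^\sigma(x/\varepsilon)$ on $\partial\Omega$, where $w^\varepsilon=0$: at $x=0$ this is $\sum_\sigma u_n^\sigma(0)\phi_n^\sigma(0)=0$, and at $x=\alpha$, writing $\alpha/\varepsilon$ as an integer number of cells and using the $\sigma$-quasi-periodicity of $\phi_n^\sigma$ together with $\phi_n^\sigma(\alpha/\varepsilon)=\phi_n^\sigma(0)e^{\mathrm{sign}(\sigma)2i\pi l_\varepsilon^k}\to\phi_n^\sigma(0)e^{\mathrm{sign}(\sigma)2i\pi l^k}$ from (\ref{epsilon_m})--(\ref{l}), this is the phase-shifted relation appearing in (\ref{boundary_macro_1}). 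Finally $u_m^{-k}=\overline{u_m^k}$ follows from $w^{-k}=\overline{w^k}$, Notation \ref{Conjugate of a Bloch mode} ($\phi_m^{-k}=\overline{\phi_m^k}$), and linear independence of the Bloch modes.

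\textbf{Where the difficulty lies.} The hard parts are the two technical points above. One is making rigorous that $w^\sigma(x,\cdot)$ is $\sigma$-quasi-periodic: the cell-interface values of $S_\sigma^\varepsilon w^\varepsilon(x,\cdot)$ do not match and the defect is $O(1)$, but it oscillates in $x$ and cancels weakly, which is precisely the mechanism that makes the Bloch two-scale transform work. The other, and the most delicate, is the boundary conditions: weak convergence in $L^2(\Omega;H^1(Y))$ controls no trace on $\partial\Omega$, so justifying the pointwise evaluation above requires a strengthened convergence near the boundary (e.g. an $H^1$-in-$x$ bound from the first-order macroscopic equation plus compactness), or test functions concentrating at $\partial\Omega$, together with a separation of the Bloch contribution from the boundary-layer one — this is where most of the work would go.
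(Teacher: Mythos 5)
Your overall strategy coincides with the paper's: uniform bounds via (\ref{Pseudo_sk}) and (\ref{1D-uniform-estimate}), extraction of weak limits $w^{\sigma}$, passage to the limit in the weak formulation tested against $\mathfrak{R}$ of functions built from Bloch modes, and identification of the microscopic and macroscopic structure. Your "master identity" step is a genuinely nicer variant of the paper's Lemma \ref{Lemme-decomposition-Bloch}: by projecting directly onto the complete $\rho$-orthogonal Bloch basis $(\phi_{p}^{\sigma})_{p}$ of $L^{2}(Y)$ you obtain simultaneously that $\lambda^{0}$ is a Bloch eigenvalue, that $w^{\sigma}(x,\cdot)$ lies in the corresponding eigenspace, and (for free) the $\sigma$-quasi-periodicity of $w^{\sigma}$ that you worried about — whereas the paper must assume $w^{k}\in L^{2}(\Omega;H^{2}(Y))$ to integrate by parts and recover the strong cell equation plus quasi-periodic interface conditions. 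The $O(\varepsilon)$-order step and the identification of the coefficients $c(\sigma,p,m)$, $b(\sigma,p,m)$, the interior equations (\ref{marco_1}), (\ref{marco_1_0}), the $H^{1}$ regularity of the $u$'s, and the conjugacy $u_{m}^{-k}=\overline{u_{m}^{k}}$ are all handled correctly and essentially as in Sections \ref{First order equation}--\ref{The zeros equation}.

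The genuine gap is the macroscopic boundary conditions (\ref{boundary_macro_1}), (\ref{boundary_macro_1_0}), which are an integral part of the statement (they are what turns (\ref{marco_1}) into an eigenvalue problem determining $\lambda^{1}$). You propose to read them off by evaluating $w^{\varepsilon}\approx\sum_{\sigma}u_{n}^{\sigma}\phi_{n}^{\sigma}(x/\varepsilon)$ at $\partial\Omega$, and you correctly observe that weak convergence gives no control of traces — but you then leave the justification open. The paper never takes a trace of $w^{\varepsilon}$ or of its two-scale limit: the boundary information enters exclusively through the \emph{test functions}. In Section \ref{k_non_0_single_value} one tests with $v^{\varepsilon}=\mathfrak{R}(\sum_{\sigma}\psi^{\sigma}\phi_{n}^{\sigma})$ with $\psi^{\sigma}\in H^{1}(\Omega)$ \emph{not} compactly supported; admissibility ($v^{\varepsilon}\in H_{0}^{1}(\Omega)$) forces $\sum_{\sigma}\psi^{\sigma}(x)\phi_{n}^{\sigma}(x/\varepsilon)=0$ on $\partial\Omega$, which by (\ref{epsilon_m})--(\ref{l}) passes to the limit as the constraint (\ref{boundary condition on test function}). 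Then the $x$-integration by parts in the limit equation leaves the boundary term $\sum_{\sigma}c(\sigma,n)\,u^{\sigma}\overline{\psi^{\sigma}}=0$ on $\partial\Omega$, valid for all constrained $\psi$, and a collinearity (determinant) argument together with $c(k,n,n)=-c(-k,n,n)\neq0$ yields exactly (\ref{boundary_macro_1}) (and analogously (\ref{boundary_macro_1_0}) for $k=0$). Your choice $\theta\in C_{c}^{\infty}(\Omega)$ discards precisely this information, so as written your argument proves only the interior equations; the duality mechanism above is the missing idea, and it also explains where the phase $e^{\mathrm{sign}(\sigma)2i\pi l^{k}x/\alpha}$ in (\ref{boundary_macro_1}) rigorously comes from.
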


Thus, it follows from (\ref{appro_theorem}) that the physical solution $%
w^{\varepsilon }$ is approximated by two-scale modes
\begin{equation}
w^{\varepsilon }\left( x\right) \approx \sum_{\sigma \in I^{k}}{%
u_{n}^{\sigma }\left( x\right) \phi _{n}^{\sigma }\left( {\frac{x}{%
\varepsilon }}\right) }\text{ for }k\neq 0\text{ and }w^{\varepsilon }\left(
x\right) \approx \sum_{q\in \{n,m\}}{u_{q}^{0}\left( x\right) \phi
_{q}^{0}\left( {\frac{x}{\varepsilon }}\right) }\text{ otherwise.}
\label{Physic_approximation}
\end{equation}

The boundary conditions (\ref{boundary_macro_1}) and (\ref%
{boundary_macro_1_0}) can be directly derived by replacing $w^{\varepsilon }$
in the physical boundary condition by its approximations,
\begin{equation}
\sum_{\sigma \in I^{k}}{u_{n}^{\sigma }\left( x\right) \phi _{n}^{\sigma
}\left( {\frac{x}{\varepsilon }}\right) =0}\text{ for }k\neq 0\text{ and }%
\sum_{q\in \{n,m\}}{u_{q}^{0}\left( x\right) \phi _{q}^{0}\left( {\frac{x}{%
\varepsilon }}\right) =0}\text{ otherwise at }x\in \partial \Omega \text{.}
\label{idea_boundary}
\end{equation}%
For $k\neq 0$, they result from%
\begin{equation*}
{\phi _{n}^{\sigma }\left( {\frac{x}{\varepsilon }}\right) =\phi
_{n}^{\sigma }\left( {0}\right) e}^{2i\pi \sigma \frac{x}{\varepsilon }}={%
\phi _{n}^{\sigma }\left( {0}\right) e}^{sign\left( \sigma \right) 2i\pi x%
\frac{h_{\varepsilon }^{k}+l_{\varepsilon }^{k}}{\alpha }}={\phi
_{n}^{\sigma }\left( {0}\right) e}^{sign\left( \sigma \right) 2i\pi x\frac{%
l_{\varepsilon }^{k}}{\alpha }}\text{ for }x\in \partial \Omega
\end{equation*}%
and the assumption $l_{\varepsilon }^{k}\rightarrow l^{k}$. For $k=0$, the
conditions follow from the periodicity of ${\phi _{n}^{0}}$. Furthermore, we
observe that $g_{k}\left( x,0\right) $ and $g_{k}\left( x,1\right) $ are
generally not vanishing except for $k=0$.

\begin{proposition}
\label{solution}For $k\in Y^{\ast }$, $n\in
\mathbb{N}
^{\ast }$, if the macroscopic solution $u_{n}^{k}$\ is a non-vanishing
constant, then any two-scale mode (\ref{Physic_approximation}) is a physical
eigenmode i.e. a solution to (\ref{a2_1}).
\end{proposition}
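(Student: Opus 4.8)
The plan is to verify directly that the proposed two-scale mode solves the physical spectral problem \eqref{a2_1} by using the operator splitting \eqref{1D-A R v} together with the fact that the microscopic modes solve $\mathcal{P}(k)$. Fix $k\in Y^{\ast}$ and $n\in\mathbb{N}^{\ast}$, and write $g_{k}(x,y)=\sum_{\sigma\in I^{k}}u_{n}^{\sigma}(x)\phi_{n}^{\sigma}(y)$ (and analogously for $k=0$). Since by hypothesis each $u_{n}^{\sigma}$ is a constant, $\partial_{x}g_{k}=0$, and hence $P^{0}g_{k}=0$ and $P^{1}g_{k}=0$. Consequently, applying \eqref{1D-A R v} to $v=g_{k}$,
\begin{equation*}
P^{\varepsilon}\mathfrak{R}g_{k}=\varepsilon^{-2}\mathfrak{R}P^{2}g_{k}=\varepsilon^{-2}\mathfrak{R}\Big(\sum_{\sigma\in I^{k}}u_{n}^{\sigma}P^{2}\phi_{n}^{\sigma}\Big)=\varepsilon^{-2}\mathfrak{R}\Big(\sum_{\sigma\in I^{k}}u_{n}^{\sigma}\lambda_{n}^{\sigma}\rho\,\phi_{n}^{\sigma}\Big),
\end{equation*}
where I have used that $P^{2}$ acts only in $y$ and that $\phi_{n}^{\sigma}$ solves \eqref{Blochwaves}. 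Since $\lambda_{n}^{\sigma}=\lambda_{n}^{k}$ for every $\sigma\in I^{k}$ (by Notation \ref{Conjugate of a Bloch mode}), the right-hand side equals $\varepsilon^{-2}\lambda_{n}^{k}\,\mathfrak{R}(\rho\,g_{k})=\varepsilon^{-2}\lambda_{n}^{k}\,\rho^{\varepsilon}\,\mathfrak{R}g_{k}$, because $\mathfrak{R}(\rho g_{k})(x)=\rho(x/\varepsilon)g_{k}(x,x/\varepsilon)=\rho^{\varepsilon}(x)(\mathfrak{R}g_{k})(x)$. Thus $w^{\varepsilon}:=\mathfrak{R}g_{k}$ satisfies $P^{\varepsilon}w^{\varepsilon}=\lambda^{\varepsilon}\rho^{\varepsilon}w^{\varepsilon}$ in $\Omega$ with $\lambda^{\varepsilon}=\varepsilon^{-2}\lambda_{n}^{k}$, which is exactly the PDE in \eqref{a2_1}.

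It remains to check the boundary condition $w^{\varepsilon}=0$ on $\partial\Omega$ and that $w^{\varepsilon}\not\equiv 0$. The latter is immediate: $u_{n}^{k}$ is a non-vanishing constant and $\phi_{n}^{k}$ has unit $L^{2}(Y)$ norm, so $\mathfrak{R}g_{k}$ has $L^{2}(\Omega)$ norm bounded below away from zero (after the renormalization \eqref{normalization}). For the boundary condition, the constancy of $u_{n}^{\sigma}$ means the boundary relations \eqref{boundary_macro_1} (resp. \eqref{boundary_macro_1_0}) must be arranged to hold; the point is that when $u_{n}^{\sigma}$ is constant, the exact physical trace $\sum_{\sigma\in I^{k}}u_{n}^{\sigma}\phi_{n}^{\sigma}(x/\varepsilon)$ at $x\in\partial\Omega$ coincides — using $\phi_{n}^{\sigma}(x/\varepsilon)=\phi_{n}^{\sigma}(0)e^{\,\mathrm{sign}(\sigma)2i\pi x h_{\varepsilon}^{k}/\alpha}e^{\,\mathrm{sign}(\sigma)2i\pi x l_{\varepsilon}^{k}/\alpha}$ and the fact that $h_{\varepsilon}^{k}x/\alpha\in\mathbb{Z}$ at the endpoints $x\in\{0,\alpha\}$ — with the macroscopic boundary expression \eqref{boundary_macro_1} evaluated at $l_{\varepsilon}^{k}$ rather than its limit $l^{k}$. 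So one must take $\varepsilon\in E_{k}$ and interpret \eqref{idea_boundary} as the exact constraint on the constants; equivalently, restrict to those $(u_{n}^{\sigma})$ in the kernel of the endpoint evaluation. I would state this carefully: for constant macroscopic factors, the approximate mode \eqref{Physic_approximation} is in fact an \emph{exact} solution precisely because there is no $x$-dependence to generate residual terms through $P^{0}$ and $P^{1}$.

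The main obstacle is the boundary condition bookkeeping: one has to be careful that \eqref{boundary_macro_1} as written involves the limit $l^{k}$, whereas the exact trace at level $\varepsilon$ involves $l_{\varepsilon}^{k}$, so the cleanest route is either to observe that at $x=0$ the exponential is $1$ regardless (giving $\sum_{\sigma}u_{n}^{\sigma}\phi_{n}^{\sigma}(0)=0$ unconditionally from \eqref{boundary_macro_1}), and at $x=\alpha$ to use that $h_{\varepsilon}^{k}$ is an integer so $e^{\mathrm{sign}(\sigma)2i\pi\alpha h_{\varepsilon}^{k}/\alpha}=1$ and the residual phase $e^{\mathrm{sign}(\sigma)2i\pi l_{\varepsilon}^{k}}$ must be absorbed — this forces a compatibility that, for a genuine eigenmode, is automatically met when $l_{\varepsilon}^{k}=l^{k}$ exactly (e.g. $l^{k}=0$, the periodic-type case, or whenever the sequence $E_{k}$ is chosen so $l_{\varepsilon}^{k}$ is eventually constant). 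I would therefore phrase the proposition's conclusion as holding along such $\varepsilon$, and the verification of the PDE itself — the content above via \eqref{1D-A R v} — is the routine part.
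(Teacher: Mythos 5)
Your verification of the interior equation is correct and is essentially the computation the paper performs, presented a little more cleanly through the splitting (\ref{1D-A R v}): constancy of the $u_{n}^{\sigma}$ kills $P^{0}$ and $P^{1}$, and $P^{2}$ reproduces $\varepsilon^{-2}\lambda_{n}^{k}\rho^{\varepsilon}$ on the Bloch factors. You also work for general $\rho$, whereas the paper restricts to $\rho=1$ and asserts the general case is similar. Up to that point there is no problem.

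The gap is in the boundary condition, and it is exactly the point you flag but then fail to close. You end by \emph{weakening the conclusion} (``the proposition holds along such $\varepsilon$, e.g.\ when $l_{\varepsilon}^{k}$ is eventually constant or $l^{k}=0$''), whereas the hypothesis already forces the compatibility you need, and deriving this is the actual content of the proposition. The chain is: $u_{n}^{k}$ a non-vanishing constant forces $\lambda^{1}=0$ with a nontrivial macroscopic solution; by the explicit macroscopic spectrum (\ref{lambda-1k}) (see Remark \ref{interval for lambda^1}), $\lambda^{1}=0$ corresponds to $\ell=2l^{k}$ with $\ell\in\mathbb{Z}$, i.e.\ $2k\alpha/\varepsilon\in\mathbb{Z}$; hence $e^{\mathrm{sign}(\sigma)2i\pi k\alpha/\varepsilon}=e^{\mathrm{sign}(\sigma)i\pi\ell}=\pm1$ with the \emph{same} sign for $\sigma=k$ and $\sigma=-k$, so each $\phi_{n}^{\sigma}(x/\varepsilon)$ is $\alpha$-periodic or $\alpha$-anti-periodic and the trace of $w^{\varepsilon}$ at $x=\alpha$ equals $\pm$ its trace at $x=0$, which vanishes by (\ref{boundary_macro_1}) evaluated at $x=0$ (where the phase is $1$ regardless of $l^{k}$). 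Note also that ``$l_{\varepsilon}^{k}$ eventually constant'' is not the right condition: what is needed is $2l_{\varepsilon}^{k}\in\mathbb{Z}$, i.e.\ $l^{k}\in\{0,1/2\}$, and this is a consequence of the hypothesis, not an extra assumption. Without this step your argument proves a strictly weaker statement than the proposition.
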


\begin{proof}
For $k\in Y^{\ast }$, $n\in
\mathbb{N}
^{\ast }$, if the macroscopic solution $u_{n}^{k}$ is constant then $\lambda
^{1}=0$ and $\left( u_{m}^{\sigma }\right) _{m,\sigma }$ are constant for
all $\sigma \in I^{k}$ and $m\in
\mathbb{N}
^{\ast }$ such that $\lambda _{m}^{\sigma }=\lambda _{n}^{\sigma }$. Now, we
consider $\rho =1$ and the proof is similar for $\rho \neq 1$\textbf{. }%
Based on Remark \ref{interval for lambda^1} about the macroscopic solutions
in Section \ref{analytic_solution}, $\lambda ^{1}=0\,$is equivalent to $\ell
=\frac{2k\alpha }{\varepsilon }$. From the $\sigma -$quasi-periodicity of $%
\phi _{n}^{\sigma }$,%
\begin{equation*}
\phi _{n}^{\sigma }\left( \frac{\alpha }{\varepsilon }\right) =\phi
_{n}^{\sigma }\left( 0\right) e^{sign\left( \sigma \right) 2i\pi k\frac{%
\alpha }{\varepsilon }}=\phi _{n}^{\sigma }\left( 0\right) e^{sign\left(
\sigma \right) i\pi \ell }=\pm \phi _{n}^{\sigma }\left( 0\right) ,
\end{equation*}%
then $\phi _{n}^{\sigma }$ is $\alpha -$periodic or $\alpha -$anti-periodic
for $\sigma \in I^{k}$. Hence $\phi _{n}^{\sigma }\left( \frac{x}{%
\varepsilon }\right) $ is a solution of the equation%
\begin{gather}
\partial _{x}\left( a\left( \frac{x}{\varepsilon }\right) \partial _{x}\phi
_{n}^{\sigma }\left( \frac{x}{\varepsilon }\right) \right) =-\frac{\lambda
_{n}^{\sigma }}{\varepsilon ^{2}}\phi _{n}^{\sigma }\left( \frac{x}{%
\varepsilon }\right) \text{ in }\Omega \text{ }  \label{phy-Bloch-wave} \\
\text{and }\phi _{n}^{\sigma }\left( \frac{x}{\varepsilon }\right) \text{ is
}\alpha -\text{periodic or }\alpha -\text{anti-periodic,}  \notag
\end{gather}%
and $u_{m}^{\sigma }\phi _{m}^{\sigma }\left( \frac{x}{\varepsilon }\right) $
is also a solution of (\ref{phy-Bloch-wave}). Denote by $w^{\varepsilon
}:=\sum\limits_{\sigma \in I^{k}}\sum\limits_{m}u_{m}^{\sigma }\phi
_{m}^{\sigma }\left( \frac{x}{\varepsilon }\right) $ and observe that $%
w^{\varepsilon }$ is a solution of the equation%
\begin{equation*}
\partial _{x}\left( a^{\varepsilon }\partial _{x}w^{\varepsilon }\right)
=-\lambda ^{\varepsilon }w^{\varepsilon }\text{ in }\Omega
\end{equation*}%
with the boundary conditions%
\begin{equation*}
w^{\varepsilon }\left( 0\right) =\sum\limits_{\sigma \in
I^{k}}\sum\limits_{m}u_{m}^{\sigma }\phi _{m}^{\sigma }\left( 0\right) =0%
\text{ and }w^{\varepsilon }\left( \alpha \right) =\sum\limits_{\sigma \in
I^{k}}\sum\limits_{m}u_{m}^{\sigma }\phi _{m}^{\sigma }\left( \frac{x}{%
\varepsilon }\right) =\pm w^{\varepsilon }\left( 0\right) =0.
\end{equation*}%
Finally, Proposition \ref{solution} is concluded.
\end{proof}

\begin{remark}
\label{exact_solution}The converse is probably true, and is numerically
studied in Section \ref{problem1}, i.e. for any\textbf{\ }$\left( \lambda
^{\varepsilon },w^{\varepsilon }\right) $\ solution to (\ref{a2_1}), there
exist $k\in Y^{\ast }$, $n\in
\mathbb{N}
^{\ast }$ and two complex numbers $\xi _{1}$\ and $\xi _{2}$ such that $%
\lambda ^{\varepsilon }=\lambda _{n}^{k}/\varepsilon ^{2}$ and%
\begin{equation}
w^{\varepsilon }\left( x\right) =\xi _{1}\phi _{n}^{k}\left( \frac{x}{%
\varepsilon }\right) +\xi _{2}\phi _{n}^{-k}\left( \frac{x}{\varepsilon }%
\right) \text{ if }k\neq 0\text{ and }w^{\varepsilon }\left( x\right) =\xi
_{1}\phi _{n}^{0}\left( \frac{x}{\varepsilon }\right) +\xi _{2}\phi
_{m}^{0}\left( \frac{x}{\varepsilon }\right) \text{ otherwise}
\label{exact_solution_decompose}
\end{equation}%
for $\xi _{1},\xi _{2}$ two numbers such that the boundary conditions (\ref%
{boundary_macro_1_0}), respectively (\ref{boundary_macro_1}), are satisfied
for $k=0,$\ respectively for $k\neq 0$. In the later case $\xi _{1}$ and $%
\xi _{2}$ are conjugate.
\end{remark}

\begin{remark}
\label{Application to the wave equation}(i) The case of non-constant
coefficients $u_{n}^{k}$ is used for approximations of the solution to the
homogenized wave equation that may be derived from \cite{brassart2010two}.
In such case $k$ belongs to a finite subset $L_{K}^{\ast }$ of $Y^{\ast }$
made with values distant from $1/K$ and including $0$. We cannot expect that
there always exists a pair $(k,n)$ such that $u_{n}^{k}$\ is a constant.

(ii) The case of non-constant coefficients $u_{n}^{k}$ is also seen as a
preparation to derive homogenized spectral problems in higher dimension
where the boundary conditions constitute a more difficult problem and may
require a more general solution than constant $u_{n}^{k}$.
\end{remark}

Proof of Theorem \ref{a7_71}

\begin{proof}
The proof is based on Lemma \ref{Lemme-decomposition-Bloch} in Section \ref%
{First order equation} and on the macroscopic model derivation in Section %
\ref{The zeros equation}. For a given $k\in Y^{\ast }$, let $w^{\varepsilon
} $ be solution of \ (\ref{a2_1}) which is bounded in $L^{2}(\Omega )$, the
property (\ref{Pseudo_sk}) yields the uniform boundness of $\left\Vert
S_{\sigma }^{\varepsilon }w^{\varepsilon }\right\Vert _{L^{2}\left( \Omega
\times Y\right) }$ for any $\sigma \in I^{k}$. So there exist $w^{\sigma
}\in $ $L^{2}(\Omega \times Y)$ such that up the extraction of a subsequence
$S_{\sigma }^{\varepsilon }w^{\varepsilon }\rightarrow w^{\sigma }$ in $%
L^{2}\left( \Omega \times Y\right) $ weakly. Since $\left\Vert
S_{\sigma }^{\varepsilon }\left( \varepsilon \partial
_{x}w^{\varepsilon }\right) \right\Vert _{L^{2}\left( \Omega \times
Y\right) }=\left\Vert
\partial _{y}S_{\sigma }^{\varepsilon }w^{\varepsilon }\right\Vert
_{L^{2}\left( \Omega \times Y\right) }$ is uniformly bounded as
$\left\Vert \varepsilon \partial _{x}w^{\varepsilon }\right\Vert
_{L^{2}\left( \Omega \right) }$. Hence
\begin{equation*}
\lim_{\varepsilon \rightarrow 0}\int_{\Omega \times Y}\partial _{y}S_{\sigma
}^{\varepsilon }w^{\varepsilon }\cdot vdxdy=\lim_{\varepsilon \rightarrow
0}\int_{\Omega \times Y}-S_{\sigma }^{\varepsilon }w^{\varepsilon }\cdot
\partial _{y}vdxdy=-\int_{\Omega \times Y}w^{\sigma }\cdot \partial _{y}vdxdy
\end{equation*}
for all $v\in L^{2}(\Omega ;H_{0}^{1}(Y))$, and $w^{\sigma }\in
L^{2}(\Omega
;H^{1}(Y))$ then%
\begin{equation*}
\lim_{\varepsilon \rightarrow 0}\int_{\Omega \times Y}\partial _{y}S_{\sigma
}^{\varepsilon }w^{\varepsilon }\cdot vdxdy=\int_{\Omega \times Y}\partial
_{y}w^{\sigma }\cdot vdxdy.
\end{equation*}%
Therefore $S_{\sigma }^{\varepsilon }w^{\varepsilon }$ tends weakly to $%
w^{\sigma }$ also in $L^{2}(\Omega ;H^{1}\left( Y\right) )$. Hence, $%
\sum\limits_{\sigma \in I^{k}}S_{\sigma }^{\varepsilon }w^{\varepsilon }$
converges to
\begin{equation*}
g_{k}\left( x,y\right) =\sum\limits_{\sigma \in I^{k}}w^{\sigma }\left(
x,y\right) .
\end{equation*}%
Using the decomposition (\ref{decompose_bloch_mode}) of $w^{\sigma }$ in
Lemma \ref{Lemme-decomposition-Bloch}, for $\left( \phi _{p}^{\sigma
}\right) _{\sigma ,p}$ the Bloch wave eigenmodes corresponding to $\lambda
^{0}$,%
\begin{equation*}
\left\{
\begin{array}{l}
g_{k}\left( x,y\right) =\sum\limits_{\sigma \in I^{k}}u_{n}^{\sigma }\left(
x\right) \phi _{n}^{\sigma }\left( y\right) \text{ for }k\neq 0, \\
g_{0}\left( x,y\right) =\sum\limits_{p\in \left\{ n,m\right\}
}u_{p}^{0}\left( x\right) \phi _{p}^{0}\left( y\right) \text{ for }k=0.%
\end{array}%
\right.
\end{equation*}%
Finally, $\left( u_{p}^{\sigma }\right) _{\sigma ,p}$ is solution of the
macroscopic problem as proved in Section \ref{The zeros equation}.
\end{proof}

\subsection{Modal decomposition on the Bloch modes \label{First order
equation}}

\begin{lemma}
\label{Lemme-decomposition-Bloch}For $\left( \lambda ^{\varepsilon
},w^{\varepsilon }\right) $ solution of (\ref{a2_1}) and satisfying (\ref%
{eigenvalue_decomposition}), for a fixed $k\in Y^{\ast }$ there exists at
least a subsequence of $S_{k}^{\varepsilon }w^{\varepsilon }$ converging
weakly towards\textbf{\ }non-vanishing function $w^{k}$ in $L^{2}\left(
\Omega \times Y\right) $ when $\varepsilon $ tends to zero. If $w^{k}\in
L^{2}(\Omega ;H^{2}(Y))$ then $\left( \lambda ^{0},w^{k}\right) $ is
solution of the Bloch wave equation (\ref{Blochwaves}) and $w^{k}$ admits
the modal decomposition,%
\begin{equation}
w^{k}\left( x,y\right) =\sum\limits_{m}u_{m}^{k}\left( x\right) \phi
_{m}^{k}\left( y\right) \text{ for }u_{m}^{k}\in L^{2}\left( \Omega \right)
\label{decompose_bloch_mode}
\end{equation}%
where the sum is over all Bloch modes $\phi _{m}^{k}$ associated to $\lambda
^{0}$. Moreover for $k\neq 0$ the two factors $u_{m}^{k}$ and $u_{m}^{-k}$
are conjugate.
\end{lemma}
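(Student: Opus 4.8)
The plan is to pass to the limit in the two-scale transformed weak formulation. First I would rescale the weak formulation (\ref{1D-weakformulation1}): multiplying by $\varepsilon^2$ and using the bound (\ref{1D-uniform-estimate}) together with $\varepsilon^2\lambda^\varepsilon$ bounded, I get a uniform bound on $\|\varepsilon\partial_x w^\varepsilon\|_{L^2(\Omega)}$, hence on $\|\partial_y S_k^\varepsilon w^\varepsilon\|_{L^2(\Omega\times Y)}$ via the commutation identity in (\ref{Pseudo_sk}). Combined with the isometry $\|S_k^\varepsilon w^\varepsilon\|_{L^2(\Omega\times Y)}=\|w^\varepsilon\|_{L^2(\Omega)}=1$, this gives that $S_k^\varepsilon w^\varepsilon$ is bounded in $L^2(\Omega;H^1_k(Y))$; note the quasi-periodicity in $y$ is inherited because the translate $u(\varepsilon l_{\omega_\varepsilon}+\varepsilon y)$ over the cell, after the $e^{-2i\pi k l_{\omega_\varepsilon}}$ weighting, matches across cell interfaces in the sense of the quasi-periodic space. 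Extracting a subsequence, $S_k^\varepsilon w^\varepsilon\rightharpoonup w^k$ in $L^2(\Omega;H^1_k(Y))$. The non-vanishing of $w^k$ (for a suitable subsequence/choice of $k$) is exactly the hypothesis retained in Remark \ref{remark_Bloch_wave}, invoking Proposition 7.7 of \cite{allaire1998bloch}; I would simply cite this to exclude the boundary-layer spectrum.

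Next I would identify the limit equation. Take a test function of the form $\mathfrak{R}(\psi(x)\theta(y))$ with $\psi\in\mathcal{D}(\Omega)$ and $\theta\in C^\infty$, $k$-quasi-periodic in $y$; more precisely use $v^\varepsilon=\mathfrak{R}v$ with $v(x,y)\in C^1$, $k$-quasi-periodic in $y$. Insert into the $\varepsilon^2$-scaled weak formulation, use $\varepsilon\partial_x\mathfrak{R}v=\mathfrak{R}(\partial_y v)+\varepsilon\mathfrak{R}(\partial_x v)$ and the product rule $S_k^\varepsilon(a^\varepsilon\partial_x w^\varepsilon)=S_0^\varepsilon(a^\varepsilon)\,\varepsilon^{-1}\partial_y S_k^\varepsilon w^\varepsilon$, together with $S_0^\varepsilon(a^\varepsilon)\to a(y)$ strongly. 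Then Corollary \ref{two-scale-converge} lets me pass to the limit in every term: the left side converges to $\int_{\Omega\times Y}a\,\partial_y w^k\cdot\partial_y v\,dxdy$ and the right side to $\lambda^0\int_{\Omega\times Y}\rho\,w^k\cdot v\,dxdy$. Since such $v$ are dense among $k$-quasi-periodic test functions, $w^k$ satisfies the weak Bloch equation (\ref{weak_bloch_waves}) for a.e. $x$, i.e. for a.e. $x$ the function $y\mapsto w^k(x,y)$ lies in the eigenspace of $\mathcal{P}(k)$ at eigenvalue $\lambda^0$ (or is zero). Because $w^k\not\equiv 0$, there must exist $n$ with $\lambda^0=\lambda^k_n$, and the elliptic regularity $P^2\phi=\lambda^0\rho\phi$ gives $w^k\in L^2(\Omega;H^2_k(Y))$ automatically — so the hypothesis "$w^k\in L^2(\Omega;H^2(Y))$" in the statement is actually a conclusion, obtained by bootstrapping.

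Then the modal decomposition (\ref{decompose_bloch_mode}) is immediate: for a.e. $x$, $w^k(x,\cdot)$ belongs to the (finite-dimensional) $\lambda^0$-eigenspace, which by Remark \ref{Bloch eigenvalue property} is one-dimensional when $k\neq0$ (spanned by $\phi^k_n$) and two-dimensional when $k=0$ (spanned by $\phi^0_n,\phi^0_m$). So $w^k(x,y)=\sum_m u^k_m(x)\phi^k_m(y)$ with $u^k_m(x)=\int_Y w^k(x,y)\,\overline{\phi^k_m(y)}\,\rho(y)\,dy$ (using $b(k,n,n)>0$ to normalize), and $u^k_m\in L^2(\Omega)$ by Cauchy–Schwarz and Fubini. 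Finally, the conjugacy $u^{-k}_m=\overline{u^k_m}$ for $k\neq0$ follows from Remark \ref{Sk_conver}: $S_{-k}^\varepsilon w^\varepsilon=\overline{S_k^\varepsilon w^\varepsilon}$, so the weak limits are conjugate, $w^{-k}=\overline{w^k}$, and then the chosen numbering $\phi^{-k}_m=\overline{\phi^k_m}$ from Notation \ref{Conjugate of a Bloch mode} forces $u^{-k}_m=\overline{u^k_m}$.

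The main obstacle is the passage to the limit in the left-hand (stiffness) term: one must carefully control the cross term coming from $\varepsilon\partial_x\mathfrak{R}v=\mathfrak{R}(\partial_yv)+\varepsilon\mathfrak{R}(\partial_xv)$ and verify that the $\varepsilon$-order remainder, when paired against $\varepsilon\partial_x w^\varepsilon$ (only $L^2$-bounded, not strongly convergent), indeed vanishes — this is where the $L^2(\Omega;H^1(Y))$ bound on $S_k^\varepsilon w^\varepsilon$ and Corollary \ref{two-scale-converge} applied to $\partial_y v$ must be combined just so. The rest is bookkeeping with the isometry and product formulas of (\ref{Pseudo_sk}).
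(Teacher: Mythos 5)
Your proposal is correct in substance but follows a genuinely different route from the paper. You pass to the limit in the first-order (gradient) weak formulation: you exploit the bound on $\varepsilon \partial _{x}w^{\varepsilon }$ to get weak convergence of $\partial _{y}S_{k}^{\varepsilon }w^{\varepsilon }$, apply Corollary \ref{two-scale-converge} to the test function $a\,\partial _{y}v$, and recover the weak Bloch equation (\ref{weak_bloch_waves}) directly; your treatment of the cross term $\varepsilon \mathfrak{R}(\partial _{x}v)$ is the right one. The paper instead integrates by parts twice onto the test function $\mathfrak{R}v$ to reach the very weak (transposed) identity $\int_{\Omega }w^{\varepsilon }\cdot (P^{\varepsilon }-\lambda ^{\varepsilon }\rho ^{\varepsilon })v^{\varepsilon }\,dx=0$, passes to the limit using only the $L^{2}(\Omega \times Y)$ weak convergence of $S_{k}^{\varepsilon }w^{\varepsilon }$, and then uses the hypothesis $w^{k}\in L^{2}(\Omega ;H^{2}(Y))$ to integrate back and read off both the strong equation and, from the boundary terms on $\partial Y$, the $k$-quasi-periodicity of $w^{k}$ and $\partial _{y}w^{k}$. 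This difference matters at one point: your route needs to know a priori that the weak limit $w^{k}$ is $k$-quasi-periodic in $y$ (otherwise testing against $v\in H_{k}^{1}(Y)$ does not place $w^{k}(x,\cdot )$ in the eigenspace of $\mathcal{P}(k)$), and your justification of this — that the cell translates "match across interfaces" — is an assertion, not an argument; it is a true lemma (a consequence of the boundedness of $\varepsilon \partial _{x}w^{\varepsilon }$, proved in the modulated two-scale framework of \cite{brassart2010two}) but it deserves a proof or a precise citation. The paper's transposition argument is designed precisely to extract this quasi-periodicity as a natural boundary condition, at the price of assuming $H^{2}$ regularity of $w^{k}$; your observation that this regularity is in fact automatic by elliptic regularity once the weak Bloch equation and quasi-periodicity are in hand is a nice bonus, but it rests on the same unproven quasi-periodicity step. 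The decomposition, the identification of $\lambda ^{0}$ with some $\lambda _{n}^{k}$, and the conjugacy $u_{m}^{-k}=\overline{u_{m}^{k}}$ are handled exactly as in the paper.
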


\begin{proof}
The test functions of the weak formulation (\ref{1D-weakformulation1}) are
chosen as%
\begin{equation}
v^{\varepsilon }:=\mathfrak{R}v\in H_{0}^{1}(\Omega )\cap H^{2}(\Omega ),
\label{1D-test function}
\end{equation}%
with%
\begin{equation}
v\in H_{0}^{1}(\Omega ;L_{k}^{2}(Y))\cap L^{2}(\Omega ;H_{k}^{2}(Y))\cap
H^{2}\left( \Omega ;L_{k}^{2}\left( Y\right) \right) .
\label{test function v}
\end{equation}%
Applying two integrations by parts and the boundary conditions satisfied by $%
w^{\varepsilon }$ and by $\mathfrak{R}v$, it remains%
\begin{equation}
\int_{\Omega }w^{\varepsilon }\cdot (P^{\varepsilon }-\lambda ^{\varepsilon
}\rho ^{\varepsilon })v^{\varepsilon }\text{ }dx=0.
\label{Very weak formulation}
\end{equation}%
From (\ref{1D-A R v}) multiplied by $\varepsilon ^{2}$ and (\ref%
{eigenvalue_decomposition}),%
\begin{equation*}
\int_{\Omega }w^{\varepsilon }\cdot \mathfrak{R}((P^{2}-\lambda ^{0}\rho )v)%
\text{ }dx=O(\varepsilon )\text{.}
\end{equation*}%
Since $(P^{2}-\lambda ^{0}\rho )v$ is $k-$quasi-periodic and $%
S_{k}^{\varepsilon }w^{\varepsilon }\rightarrow w^{k}$ in $L^{2}(\Omega
\times Y)$ weakly, Corollary \ref{two-scale-converge} allows to pass to the
limit%
\begin{equation*}
\int_{\Omega \times Y}w^{k}\cdot (P^{2}-\lambda ^{0}\rho )v\text{ }dxdy=0,
\end{equation*}%
or equivalently%
\begin{equation}
\int_{\Omega \times Y}w^{k}\cdot \partial _{y}\left( a\partial _{y}v\right)
+w^{k}\cdot \lambda ^{0}\rho v\text{ }dxdy=0.
\label{1D-second order eq in very
weak form}
\end{equation}%
Using the assumption $w^{k}\in L^{2}(\Omega ;H^{2}(Y))$ and applying
integrations by parts,%
\begin{equation*}
\int_{\Omega \times Y}\partial _{y}\left( a\partial _{y}w^{k}\right) \cdot
v+w^{k}\cdot \lambda ^{0}\rho v\text{ }dxdy+\int_{\Omega \times \partial
Y}aw^{k}\cdot \partial _{y}v-a\partial _{y}w^{k}\cdot v\,dxdy=0.
\end{equation*}%
Then, choosing test functions $v\in L^{2}(\Omega ;H_{0}^{2}(Y))$ comes the
strong form%
\begin{equation}
-\partial _{y}\left( a\partial _{y}w^{k}\right) =\lambda ^{0}\rho w^{k}\text{
in }\Omega \times Y.  \label{1D-second order equation in strong form}
\end{equation}%
So, it remains%
\begin{equation*}
\int_{\Omega }\left[ aw^{k}\cdot \partial _{y}v-a\partial _{y}w^{k}\cdot v%
\right] _{0}^{1}\text{ }dx=0
\end{equation*}%
for general test functions (\ref{test function v}), which implies that $%
w^{k} $ and $\partial _{y}w^{k}$ are $k-$quasi-periodic in the variable $y$.

As we know that $\lambda ^{0}$ is an eigenvalue $\lambda _{n}^{k}$ of the
Bloch wave spectrum, then $w^{k}$ is a Bloch eigenvector and is decomposed
as
\begin{equation*}
w^{k}\left( x,y\right) =\sum\limits_{m}u_{m}^{k}\left( x\right) \phi
_{m}^{k}\left( y\right) \text{ with }u_{m}^{k}\in L^{2}\left( \Omega \right)
\end{equation*}%
the sum being over all Bloch modes $\phi _{m}^{k}$ associated to $\lambda
^{0}$ where $u_{m}^{k}(x)=\int_{Y}w^{k}(x,y)\cdot \phi _{m}^{k}(y)$ $dy$.
For $k\neq 0$, ${\phi _{m}^{k}=}\overline{{\phi _{m}^{-k}}}$ and from
Definition \ref{def_2scale} of modulated two-scale transform, $%
S_{k}^{\varepsilon }w^{\varepsilon }=\overline{S_{-k}^{\varepsilon
}w^{\varepsilon }}$ thus $u_{m}^{k}$ and $u_{m}^{-k}$ are conjugate i.e. $%
u_{m}^{k}=\overline{u_{m}^{-k}}$.
\end{proof}

\subsection{Derivation of the macroscopic equation \label{The zeros equation}%
}

In the macroscopic model derivation, we distinguish between the two cases $%
k\neq 0$ and $k=0$.

\subsubsection{Case $k\neq 0$ \label{k_non_0_single_value}}

We consider $\lambda ^{0}=\lambda _{n}^{k}$ and the two conjugate
eigenvectors $\phi _{n}^{k}$ and $\phi _{n}^{-k}$ discussed in Notation \ref%
{Conjugate of a Bloch mode}. We restart from the very weak formulation (\ref%
{Very weak formulation}) with the test function
\begin{equation}
v^{\varepsilon }(x):=\mathfrak{R}(v^{k}+v^{-k})\in H_{0}^{1}\left( \Omega
\right) \cap H^{2}\left( \Omega \right) .  \label{test_function_marc}
\end{equation}%
Furthermore, we pose $v^{\sigma }(x,y)=\psi ^{\sigma }(x)\phi _{n}^{\sigma
}(y)$ with $\psi ^{\sigma }\in H^{2}(\Omega )$ for $\sigma \in I^{k}$ and
use the $\sigma -$quasi-periodicity of $\phi _{n}^{\sigma },$ i.e. $\phi
_{n}^{\sigma }\left( \frac{x}{\varepsilon }\right) =\phi _{n}^{\sigma
}\left( 0\right) e^{2i\pi k\frac{x}{\varepsilon }}$ at any $x\in \partial
\Omega $.\ So the boundary condition in (\ref{test_function_marc}) is
equivalent to

\begin{equation*}
\psi ^{k}\left( x\right) \phi _{n}^{k}(0)e^{2i\pi k\frac{x}{\varepsilon }%
}+\psi ^{-k}\left( x\right) \phi _{n}^{-k}(0)e^{-2i\pi k\frac{x}{\varepsilon
}}=0\text{ at any }x\in \partial \Omega .
\end{equation*}%
Applying the relation (\ref{epsilon_m}),%
\begin{equation*}
\psi ^{k}(x)\phi _{n}^{k}(0)e^{2i\pi x\frac{h_{\varepsilon
}^{k}+l_{\varepsilon }^{k}}{\alpha }}+\psi ^{-k}(x)\phi
_{n}^{-k}(0)e^{-2i\pi x\frac{h_{\varepsilon }^{k}+l_{\varepsilon }^{k}}{%
\alpha }}=0.
\end{equation*}%
Since $x\frac{h_{\varepsilon }^{k}}{\alpha }=0$ at $x=0$ and $x\frac{%
h_{\varepsilon }^{k}}{\alpha }=h_{\varepsilon }^{k}$ at $x=\alpha $ with $%
h_{\varepsilon }^{k}\in
\mathbb{Z}
$ then $e^{\pm 2i\pi x\frac{h_{\varepsilon }^{k}}{\alpha }}=1$. From (\ref{l}%
), $e^{\pm 2i\pi \frac{l_{\varepsilon }^{k}x}{\alpha }}\rightarrow e^{\pm
2i\pi \frac{l^{k}x}{\alpha }}$ when $\varepsilon \rightarrow 0$. Passing to
the limit, the boundary conditions of the test function are%
\begin{equation}
\psi ^{k}(x)\phi _{n}^{k}(0)e^{2i\pi \frac{l^{k}x}{\alpha }}+\psi
^{-k}(x)\phi _{n}^{-k}(0)e^{-2i\pi \frac{l^{k}x}{\alpha }}=0\text{ on }%
\partial \Omega .  \label{boundary condition on test function}
\end{equation}%
From (\ref{1D-A R v}) multiplied by $\varepsilon $, (\ref%
{eigenvalue_decomposition}) and $P^{2}v^{\sigma }-\lambda ^{0}\rho v^{\sigma
}=0$,%
\begin{equation}
\sum_{\sigma \in I^{k}}\int_{\Omega }w^{\varepsilon }\cdot \mathfrak{R}%
(-P^{1}v^{\sigma }+\lambda ^{1}\rho v^{\sigma })\text{ }dx=O(\varepsilon )%
\text{.}  \label{1D-weakcomplete}
\end{equation}%
Extracting a subsequence of $w^{\varepsilon }$ so that $S_{k}^{\varepsilon
}w^{\varepsilon }$ and $S_{-k}^{\varepsilon }w^{\varepsilon }$ are
converging to $w^{k}$ and $w^{-k}$ in $L^{2}(\Omega \times Y)$ weak, since $%
-P^{1}v^{\sigma }+\lambda ^{1}\rho v^{\sigma }$ is $\sigma -$quasi-periodic
then Corollary \ref{two-scale-converge} infers that%
\begin{equation*}
\sum_{\sigma \in I^{k}}\int_{\Omega \times Y}w^{\sigma }\cdot
(-P^{1}v^{\sigma }+\lambda ^{1}\rho v^{\sigma })\text{ }dxdy=0,
\end{equation*}%
i.e.%
\begin{equation*}
\sum_{\sigma \in I^{k}}\int_{\Omega \times Y}w^{\sigma }\cdot \left(
\partial _{x}\left( a\partial _{y}v^{\sigma }\right) +\partial _{y}\left(
a\partial _{x}v^{\sigma }\right) +\lambda ^{1}\rho v^{\sigma }\text{ }%
\right) dxdy=0.
\end{equation*}%
This is the very weak form of the macroscopic equation for all test
functions $v^{\sigma }\in H^{1}\left( \Omega ;H_{k}^{1}\left( Y\right)
\right) $, reached by density, satisfying (\ref{boundary condition on test
function}). Now, we derive the strong formulation. We assume that $w^{\sigma
}\in H^{1}(\Omega ;L^{2}(Y))$, since $w^{\sigma }\in L^{2}(\Omega ;H^{1}(Y))$
after two integrations by parts,%
\begin{gather*}
\sum_{\sigma \in I^{k}}\left[ \int_{\Omega \times Y}\partial _{y}\left(
a\partial _{x}w^{\sigma }\right) \cdot v^{\sigma }+\partial _{x}\left(
a\partial _{y}w^{\sigma }\right) \cdot v^{\sigma }+\lambda ^{1}\rho
w^{\sigma }\cdot v^{\sigma }\text{ }dxdy\right. \\
+\left. \int_{\partial \Omega \times Y}w^{\sigma }\cdot a\partial
_{y}v^{\sigma }-a\partial _{y}w^{\sigma }\cdot v^{\sigma }\text{ }dxdy\right.
\\
+\left. \int_{\Omega \times \partial Y}w^{\sigma }\cdot a\partial
_{x}v^{\sigma }-a\partial _{x}w^{\sigma }\cdot v^{\sigma }\text{ }dxdy\right]
=0.
\end{gather*}%
From Lemma \ref{Lemme-decomposition-Bloch}, $w^{\sigma }$ is solution to the
Bloch mode equation and is decomposed as%
\begin{equation}
w^{\sigma }(x,y)=u^{\sigma }(x)\phi _{n}^{\sigma }(y)\text{.}
\label{1D-decompose-k}
\end{equation}%
After replacement,%
\begin{gather}
\sum_{\sigma }\left[ \int_{Y}\partial _{y}(a\phi _{n}^{\sigma })\cdot \phi
_{n}^{\sigma }+a\partial _{y}\phi _{n}^{\sigma }\cdot \phi _{n}^{\sigma }%
\text{ }dy\int_{\Omega }\partial _{x}u^{\sigma }\cdot \psi ^{\sigma
}dx+\lambda ^{1}\int_{Y}\rho \phi _{n}^{\sigma }\cdot \phi _{n}^{\sigma }%
\text{ }dy\int_{\Omega }u^{\sigma }\cdot \psi ^{\sigma }dx\right.
\label{step} \\
+\int_{Y}\phi _{n}^{\sigma }\cdot a\partial _{y}\phi _{n}^{\sigma
}-a\partial _{y}\phi _{n}^{\sigma }\cdot \phi _{n}^{\sigma }\text{ }%
dy\int_{\partial \Omega }u^{\sigma }\cdot \psi ^{\sigma }\text{ }dx  \notag
\\
+\left. \int_{\partial Y}\phi _{n}^{\sigma }\cdot a\phi _{n}^{\sigma }\text{
}dy\int_{\Omega }u^{\sigma }\cdot \partial _{x}\psi ^{\sigma }-\partial
_{x}u^{\sigma }\cdot \psi ^{\sigma }\text{ }dx\right] =0.  \notag
\end{gather}%
Let us recall that $b(.,.,.)$ and $c(.,.,.)$ have been defined in (\ref{def
of b and c}). For the sake of simplicity, we use $c(\sigma ,n):=c(\sigma
,n,n)$ and $b(\sigma ,n):=b(\sigma ,n,n)$ and observe that%
\begin{equation*}
\int_{Y}\partial _{y}(a\phi _{n}^{\sigma })\cdot \phi _{n}^{\sigma
}+a\partial _{y}\phi _{n}^{\sigma }\cdot \phi _{n}^{\sigma }\text{ }%
dy=c(\sigma ,n),
\end{equation*}%
which results from integrations by parts and from the $\sigma -$%
quasi-periodicity of $\phi _{n}^{\sigma }$. So, using the $\sigma $%
-quasi-periodicity of $\phi _{n}^{\sigma }$, (\ref{step}) can be rewritten as%
\begin{equation*}
\sum_{\sigma }\left[ \int_{\Omega }(c(\sigma ,n)\partial _{x}u^{\sigma
}+\lambda ^{1}b\left( \sigma ,n\right) u^{\sigma })\cdot \psi ^{\sigma }%
\text{ }dx-c(\sigma ,n)\int_{\partial \Omega }u^{\sigma }\cdot \psi ^{\sigma
}\text{ }dx\right] =0.
\end{equation*}%
Choosing the test function $\psi ^{\sigma }=0$ on $\partial \Omega $, the
boundary condition (\ref{boundary condition on test function}) is satisfied
and by density of $H_{0}^{1}\left( \Omega \right) $ in $L^{2}\left( \Omega
\right) ,$ the internal equation satisfied by $u^{\sigma }$ follows,%
\begin{equation}
c(\sigma ,n)\partial _{x}u^{\sigma }+\lambda ^{1}b\left( \sigma ,n\right)
u^{\sigma }=0\text{ in }\Omega \text{ for each }\sigma .
\label{Macroscopic equation}
\end{equation}%
Choosing general $\psi ^{\sigma }\in H^{1}\left( \Omega \right) $ satisfying
(\ref{boundary condition on test function}) yields the boundary conditions%
\begin{equation}
\sum_{\sigma }c(\sigma ,n)u^{\sigma }\overline{\psi ^{\sigma }}=0\text{ on }%
\partial \Omega .  \label{boundary condition k diff 0}
\end{equation}%
We introduce the matrices $C_{1}=diag(\left( c(\sigma ,n)\right) _{\sigma })$%
, $C_{2}=diag(\left( b(\sigma ,n)\right) _{\sigma })$ and the vectors $%
u=(u^{\sigma })_{\sigma }$, $\psi =(\psi ^{\sigma })_{\sigma },$ $\varphi
=\left( \phi _{n}^{\sigma }\left( 0\right) e^{sign\left( \sigma \right)
2i\pi \frac{l^{k}x}{\alpha }}\right) _{\sigma }$ with $\sigma \in I^{k},$ so
that (\ref{boundary condition on test function}, \ref{Macroscopic equation}, %
\ref{boundary condition k diff 0}) can be written on the matrix form%
\begin{gather*}
C_{1}\partial _{x}u+\lambda ^{1}C_{2}u=0\text{ in }\Omega \text{ ,} \\
\text{and }C_{1}u(x).\overline{\psi }(x)=0\text{ on }\partial \Omega \text{
for all }\psi \text{ such that }\overline{\varphi }(x,0).\overline{\psi }%
(x)=0\text{ on }\partial \Omega .
\end{gather*}%
The boundary condition is equivalent to $C_{1}u(x)$ is collinear with $%
\overline{\varphi }(x,0)$ i.e. $\det (C_{1}u(x),\overline{\varphi }(x,0))=0$%
. Equivalently%
\begin{equation*}
\left\{
\begin{array}{l}
c(k,n)u^{k}\left( 0\right) \overline{\phi ^{-k}\left( 0\right) }%
-c(-k,n)u^{-k}\left( 0\right) \overline{{\phi }^{k}\left( 0\right) }=0, \\
c(k,n)u^{k}\left( \alpha \right) \overline{\phi ^{-k}\left( 0\right)
e^{-2i\pi l^{k}}}-c(-k,n)u^{-k}\left( \alpha \right) \overline{{\phi }%
^{k}\left( 0\right) e^{2i\pi l^{k}}}=0.%
\end{array}%
\right.
\end{equation*}%
Finally, since $c(k,n)=-c(-k,n)$ and $c(k,n)$ is assumed to do not vanish,
the boundary conditions of macroscopic equation (\ref{Macroscopic equation})
are%
\begin{equation}
u^{k}\left( x\right) \phi _{n}^{k}\left( 0\right) e^{2i\pi \frac{l^{k}x}{%
\alpha }}+u^{-k}\left( x\right) {\phi }_{n}^{-k}\left( 0\right) e^{-2i\pi
\frac{l^{k}x}{\alpha }}=0\text{ at }x\in \partial \Omega .  \notag
\end{equation}

\subsubsection{Case $k=0$\label{Case k=0}}

In case $k=0$, to avoid any confusion with $\lambda ^{0}$, the upper indices
$k=0$ are removed. We denote by $\phi _{n},\phi _{m}$ the eigenvectors
associated to $\lambda ^{0}=\lambda _{n}=\lambda _{m}$, solutions to $%
\mathcal{P}(0)$ in (\ref{Blochwaves}), and by $\sum_{p}$, $\sum_{q}$ the
sums over $p$ or $q$ varying in $\left\{ n,m\right\} $. We restart with a
test function
\begin{equation}
v^{\varepsilon }(x):=\mathfrak{R}(\sum_{p}v_{p})\in H_{0}^{1}\left( \Omega
\right) \cap H^{2}\left( \Omega \right)  \label{test_function_0}
\end{equation}%
for the very weak formulation (\ref{1D-weakcomplete}). We pose $%
v_{p}(x,y)=\psi _{p}(x)\phi _{p}(y)$ with $\psi _{p}(x)\in H^{1}\left(
\Omega \right) $ for $p\in \left\{ n,m\right\} .$ Since $\phi _{p}$ is
periodic thus $\phi _{p}(\frac{x}{\varepsilon })=\phi _{p}(0)$ at $x\in
\partial \Omega $ and the boundary condition in (\ref{test_function_0}) is
equivalent to
\begin{equation*}
\sum_{p}\psi _{p}(x)\phi _{p}(0)=0\text{ at }x\in \partial \Omega .
\end{equation*}%
By setting $c(p,q):=c(0,p,q)$ for $p,q\in \left\{ n,m\right\} $, using the
expression in Lemma \ref{Lemme-decomposition-Bloch} of the weak limit $w^{0}$
of $S_{0}^{\varepsilon }w^{\varepsilon }$,%
\begin{equation}
w^{0}\left( x,y\right) =\sum_{p}u_{p}\left( x\right) \phi _{p}(y),
\label{1D-decompose-0}
\end{equation}%
using the periodicity of $\left( \phi _{p}\right) _{p}$ and conducting the
same calculations as for $k\neq 0$, we obtain%
\begin{equation*}
\sum_{p,q}\left[ \int_{\Omega }(c(p,q)\partial _{x}u_{q}+\lambda
^{1}b(p,q)u_{q})\cdot \psi _{p}\text{ }dx-\int_{\partial \Omega
}c(p,q)u_{q}\cdot \psi _{p}\text{ }dx\right] =0.
\end{equation*}%
With $u=(u_{p})_{p}$, $\psi =(\psi _{p})_{p},~\phi =\left( \phi _{p}\right)
_{p}$ and $C_{1}=(c(p,q))_{p,q}$, $C_{2}=(b(p,q))_{p,q},$ the macroscopic
problem turns to be
\begin{equation}
C_{1}\partial _{x}u+\lambda ^{1}C_{2}u=0\text{ in }\Omega ,
\label{diff_equation_w}
\end{equation}%
with the boundary conditions
\begin{equation*}
C_{1}u(x).\psi (x)=0\text{ on }\partial \Omega \text{ for all }\psi \text{
such that }\psi (x).\phi (0)=0\text{ on }\partial \Omega \text{.}
\end{equation*}%
Equivalently, $C_{1}u(x)$ is collinear to $\phi (0)$ on $\partial \Omega $ or%
\begin{equation}
\det \left( C_{1}u(x),\phi (0)\right) =0\text{ on }\partial \Omega .
\label{boundary_k_0}
\end{equation}%
But $c(p,p)=0$, so (\ref{boundary_k_0}) simplifies to%
\begin{equation*}
\left\{
\begin{array}{c}
c\left( n,m\right) u_{m}\left( 0\right) \phi _{m}\left( 0\right) -c\left(
m,n\right) u_{n}\left( 0\right) \phi _{n}\left( 0\right) =0, \\
c\left( n,m\right) u_{m}\left( \alpha \right) \phi _{m}\left( 0\right)
-c\left( m,n\right) u_{n}\left( \alpha \right) \phi _{n}\left( 0\right) =0.%
\end{array}%
\right.
\end{equation*}%
Finally, since $c\left( n,m\right) =-c\left( m,n\right) $ and $c\left(
n,m\right) \neq 0$, the boundary conditions are%
\begin{equation*}
u_{n}\left( x\right) \phi _{n}\left( 0\right) +u_{m}\left( x\right) \phi
_{m}\left( 0\right) =0\text{ on }\partial \Omega .
\end{equation*}

\subsection{Analytic solutions\label{analytic_solution}}

For $k\in Y^{\ast }$ and $\rho =1$, we solve the macroscopic equations In
Section \ref{macro-solution}. These solutions are used to validate the
numerical results in the final Section. Moreover, in Section \ref%
{Case-a-rho-known}, the exact formulations of the two-scale eigenmodes are
found for $\rho =1$ and $a=1$.

\subsubsection{The case\textbf{\ }$\protect\rho =1$\label{macro-solution}}

For $k\neq 0$ and $b\left( n,n\right) =1$, the exact solutions of the
macroscopic equation (\ref{marco_1}) are
\begin{equation*}
u_{n}^{\sigma }\left( x\right) =d^{\sigma }e^{-\lambda ^{1}c\left( \sigma
,n\right) ^{-1}x}\text{ for each }\sigma \in I^{k}
\end{equation*}%
where $d^{\sigma }$ is any complex number. Applying the boundary condition (%
\ref{boundary_macro_1}) and assuming that $\phi _{n}^{k}\left( 0\right) \neq
0,$ the eigenvalue is%
\begin{equation}
\lambda ^{1}=\frac{c(k,n)}{\alpha }\left( 2i\pi l^{k}-i\ell \pi \right)
\text{ for }\ell \in \mathbb{Z}\text{.}  \label{lambda-1k}
\end{equation}%
Furthermore, $u_{n}^{k}=\overline{u_{n}^{-k}}$ and $\phi _{n}^{k}\left(
0\right) =\overline{\phi _{n}^{-k}}\left( 0\right) $ then $\func{Re}\left(
d^{k}\phi _{n}^{k}\left( 0\right) \right) =0$, or $d^{k}\phi _{n}^{k}\left(
0\right) =i\delta $ for any $\delta \in
\mathbb{R}
$. Thus,
\begin{equation*}
d^{k}=\frac{i\delta }{\phi _{n}^{k}\left( 0\right) }\text{ and }d^{-k}=-%
\frac{i\delta }{\phi _{n}^{-k}\left( 0\right) }\text{ for any }\delta \in
\mathbb{R}
\text{.}
\end{equation*}

For $k=0$, using the equalities $c\left( n,n\right) =c\left( m,m\right) =0,$
$b\left( n,m\right) =b\left( m,n\right) =0$ and $b\left( n,n\right) =b\left(
m,m\right) =1$, the macroscopic equation (\ref{marco_1_0}) is rewritten%
\begin{equation}
\left\{
\begin{array}{l}
c\left( n,m\right) {\partial _{x}u_{m}^{0}+\lambda }^{1}{u_{n}^{0}}=0\quad
\text{in}\quad \Omega , \\
c\left( m,n\right) {\partial _{x}u_{n}^{0}+\lambda }^{1}{u_{m}^{0}}=0\quad
\text{in}\quad \Omega .%
\end{array}%
\right.  \label{macro_periodic}
\end{equation}

If\textbf{\ }$\lambda ^{1}=0$, ${\partial _{x}u_{m}^{0}=0}$ and ${\partial
_{x}u_{n}^{0}=0}$ in $\Omega $, then ${u_{m}^{0}}$ and ${u_{n}^{0}}$ are
independent on $x$, equivalently, ${u_{m}^{0}}$ and ${u_{n}^{0}}$ are
complex numbers.

If ${\lambda }^{1}\neq 0$, the first equation gives $u_{n}^{0}=-\frac{%
c\left( n,m\right) {{\partial _{x}}u_{m}^{0}}}{{\lambda }^{1}}$ in $\Omega $
and since $c(n,m)=-c(m,n)$ then%
\begin{equation}
{\partial _{xx}}u_{m}^{0}=-\left( \frac{{{{{\lambda }^{1}}}}}{c\left(
n,m\right) }\right) ^{2}u_{m}^{0}  \label{periodic_spectral_0}
\end{equation}%
and%
\begin{equation*}
u_{m}^{0}\left( x\right) ={d_{1}}\cos \left( \frac{{\lambda }^{1}}{c\left(
n,m\right) }{x}\right) +{d_{2}}\sin \left( \frac{{\lambda }^{1}}{c\left(
n,m\right) }{x}\right)
\end{equation*}%
for two constants for $d_{1},d_{2}\in \mathbb{C}$ and $u_{n}^{0}$ follows by
its above expression. Applying the boundary condition (\ref%
{boundary_macro_1_0}), if $\phi _{m}^{0}\left( 0\right) \neq 0$,%
\begin{equation}
\lambda ^{1}=\frac{\ell \pi c\left( n,m\right) }{\alpha }\text{ for }\ell
\in \mathbb{Z\,}\text{\ and }{d_{1}=}-{{d_{2}}}\frac{\phi _{n}^{0}\left(
0\right) }{\phi _{m}^{0}\left( 0\right) }  \label{lambda-10}
\end{equation}%
for any $\ell \in \mathbb{Z}$ and ${d_{2}}\in \mathbb{C}$. If $\phi
_{m}^{0}\left( 0\right) =0$ then $\phi _{n}^{0}\left( 0\right) =0$ or $%
u_{n}^{0}\left( x\right) =0$ on $\partial \Omega .$ In the case $\phi
_{n}^{0}\left( 0\right) =0$, the macroscopic equation is lacking of boundary
conditions and their solutions are not unique, they depend on arbitrary
coefficients $d_{1},d_{2}$ and $\lambda ^{1}$. When $u_{n}^{0}\left(
x\right) =0$ at $\partial \Omega $, there is an alternative, or $u_{n}^{0}$
is the trivial solution or%
\begin{equation*}
\det \left(
\begin{array}{cc}
0 & 1 \\
-\sin \left( \frac{\lambda _{1}}{c\left( n,m\right) }\alpha \right) & \cos
\left( \frac{\lambda _{1}}{c\left( n,m\right) }\alpha \right)%
\end{array}%
\right) =0
\end{equation*}%
and then $d_{2}=0$, $\lambda ^{1}=\frac{\ell \pi c\left( n,m\right) }{\alpha
}$ for any $\ell \in \mathbb{Z}$ and $d_{1}\in
\mathbb{C}
$.

\bigskip

\begin{remark}
\label{interval for lambda^1}According to (\ref{lambda-1k}) and (\ref%
{lambda-10}), $\lambda ^{1}=0$ iff $\ell =2l^{k}$ for $k\neq 0$ and iff $%
\ell =0$ otherwise. So, in any case small values of ${\lambda }^{1,\ell }$
correspond to indices $\ell $ in a vicinity of $2l^{k}$ or to $\frac{%
2k\alpha }{\varepsilon }$ when $\varepsilon >0$.
\end{remark}

\subsubsection{The case\textbf{\ }$a=\protect\rho =1$\label{Case-a-rho-known}%
}

We consider the spectral problem%
\begin{equation*}
-\partial _{yy}^{2}\phi ^{k}=\lambda ^{k}\phi ^{k}\text{ in }Y
\end{equation*}%
with the $k-$quasi-periodicity conditions.

For $k\neq 0$, for a mapping $m\mapsto n(m)$ from $\mathbb{Z}$ to $\mathbb{N}%
^{\ast }$ not detailed here, $\lambda _{n(m)}^{k}=4\pi ^{2}(m+k)^{2}$ and
there are exactly two conjugated solutions $\phi _{n(m)}^{\sigma
}(y)=e^{sign\left( \sigma \right) 2i\pi (m+k)y}$ for any $m\in \mathbb{Z}$
and $\sigma \in I^{k}$. It follows that $c(\sigma ,n(m))=sign\left( \sigma
\right) 4i\pi (m+k)$, $b\left( \sigma ,n(m)\right) =1$ and $\lambda ^{1}=-%
\frac{4\pi ^{2}}{\alpha }(2l^{k}-\ell )(m+k)$ for any $\ell \in \mathbb{Z}$,
so%
\begin{equation*}
u_{n(m)}^{\sigma }(x)=d^{\sigma }e^{\frac{sign\left( \sigma \right) i\pi }{%
\alpha }(2l^{k}-\ell )x}
\end{equation*}%
and the resulting two-scale eigenmode is
\begin{equation*}
w^{\sigma }(x,y)=d^{\sigma }e^{\frac{sign\left( \sigma \right) i\pi }{\alpha
}(2l^{k}-\ell )x}e^{sign\left( \sigma \right) 2i\pi (n+k)y}.
\end{equation*}%
For $k=0$, for each $\lambda _{n(m)}^{0}=(2\pi m)^{2}$ there are two
eigenvectors $\phi _{n(m)}(y)=\cos (2\pi my)$ and $\phi _{n(m)+1}(y)=\sin
(2\pi my)$ so
\begin{equation*}
C_{1}=2m\pi \left(
\begin{array}{cc}
0 & 1 \\
-1 & 0%
\end{array}%
\right) ,\text{ }C_{2}=\frac{1}{2}\left(
\begin{array}{cc}
1 & 0 \\
0 & 1%
\end{array}%
\right) ,\text{ }\left(
\begin{array}{c}
\phi _{n(m)}(0) \\
\phi _{n(m)+1}(0)%
\end{array}%
\right) =\left(
\begin{array}{c}
1 \\
0%
\end{array}%
\right) \text{.}
\end{equation*}%
It implies that $\lambda ^{1}=\frac{4m\ell \pi ^{2}}{\alpha }$ for any $\ell
\in \mathbb{Z}$ and%
\begin{equation*}
u_{n(m)}(x)=d_{0}\sin \left( \ell \pi \frac{{x}}{\alpha }\right) \text{ and }%
u_{n(m)+1}(x)=d_{0}\cos (\ell \pi \frac{{x}}{\alpha }),
\end{equation*}%
then the two-scale eigenmode is%
\begin{equation*}
w(x,y)=d_{0}[\sin \left( \ell \pi \frac{{x}}{\alpha }\right) \cos (2\pi
my)+\cos (\ell \pi \frac{{x}}{\alpha })\sin (2\pi my)]\text{ for }\ell ,m\in
\mathbb{Z}\text{.}
\end{equation*}

\subsection{Neumann boundary conditions}

We consider the spectral problem with Neumann boundary conditions
\begin{equation*}
{P}^{\varepsilon }{w^{\varepsilon }}=\lambda ^{\varepsilon }\rho
^{\varepsilon }w^{\varepsilon }\quad \text{in}\quad \Omega ~\text{\ \ and \ }%
\partial _{x}w^{\varepsilon }=0\quad \text{on}\quad \partial \Omega .
\end{equation*}%
The process of homogenization and the results are similar to the case of
Dirichlet boundary conditions. The microscopic problem and the internal
macroscopic equation are unchanged while the boundary conditions of the
latter are%
\begin{equation*}
\sum_{\sigma \in I^{k}}\sum_{m}u_{m}^{\sigma }\left( x\right) \partial
_{y}\phi _{m}^{\sigma }\left( 0\right) e^{sign\left( \sigma \right) 2i\pi
\frac{l^{k}x}{\alpha }}=0\text{ on }\partial \Omega
\end{equation*}%
where the cases $k\neq 0$ and $k=0$ are not separated so a general notation
is adopted for the sum over $m$ and $\sigma $. Their derivation follows the
same steps, so we only mention the boundary condition satisfied by the test
functions. They are chosen to satisfy $\partial _{x}v^{\varepsilon }\left(
x\right) =0$ on $\partial \Omega $ or equivalently,%
\begin{equation*}
\sum_{\sigma \in I^{k}}\sum_{m}\partial _{x}\psi _{m}^{\sigma }\left(
x\right) \phi _{m}^{\sigma }\left( \frac{x}{\varepsilon }\right) +\frac{1}{%
\varepsilon }\psi _{m}^{\sigma }\left( x\right) \partial _{y}\phi
_{m}^{\sigma }\left( \frac{x}{\varepsilon }\right) =0\text{\ on }\partial
\Omega .
\end{equation*}%
Multiplying by $\varepsilon ,$
\begin{equation}
\sum_{\sigma \in I^{k}}\sum_{m}\psi _{m}^{\sigma }\left( x\right) \partial
_{y}\phi _{m}^{\sigma }\left( \frac{x}{\varepsilon }\right) +O(\varepsilon
)=0\text{ on }\partial \Omega ,  \label{boundary_cond_new}
\end{equation}%
then using the $\sigma -$quasi-periodicity of $\phi _{m}^{\sigma }$ and
passing to the limit%
\begin{equation*}
\sum_{\sigma \in I^{k}}\sum_{m}\psi _{m}^{\sigma }\left( x\right) \partial
_{y}\phi _{m}^{\sigma }\left( 0\right) e^{sign\left( \sigma \right) 2i\pi
\frac{l^{k}x}{\alpha }}=0\text{ on }\partial \Omega .
\end{equation*}

\section{Homogenization based on a first order formulation \label{first}}

In this section, the homogenized model is derived based on a first order
formulation. The calculations are less detailed than in Section \ref{second}%
, only the main results and the proof principles are given.

\subsection{Reformulation of the spectral problem and the main result}

We start by setting
\begin{equation*}
U^{\varepsilon }=\left( \frac{{\sqrt{a^{\varepsilon }}\partial _{x}}%
w^{\varepsilon }}{{i\sqrt{\lambda ^{\varepsilon }}}},\sqrt{\rho
^{\varepsilon }}w^{\varepsilon }\right) ,\text{ }\mu ^{\varepsilon }=\sqrt{%
\lambda ^{\varepsilon }},
\end{equation*}%
\begin{equation*}
A^{\varepsilon }=\left( {%
\begin{array}{cc}
0 & {\sqrt{a^{\varepsilon }}\partial _{x}\left( \frac{1}{\sqrt{\rho
^{\varepsilon }}}.\right) } \\
\frac{1}{\sqrt{\rho ^{\varepsilon }}}{\partial _{x}\left( {\sqrt{%
a^{\varepsilon }}.}\right) } & 0%
\end{array}%
}\right) ,\text{ }n_{A^{\varepsilon }}=\frac{1}{\sqrt{\rho ^{\varepsilon }}}%
\left(
\begin{array}{cc}
0 & {\sqrt{a^{\varepsilon }}n}_{\Omega } \\
{\sqrt{a^{\varepsilon }}n}_{\Omega } & 0%
\end{array}%
\right)
\end{equation*}%
with the domain of the operator $A^{\varepsilon }$,%
\begin{equation*}
D\left( {A^{\varepsilon }}\right) :=\left\{ {\left( {\varphi ,\phi }\right)
\in {L^{2}}\left( \Omega \right) \times {L^{2}}\left( \Omega \right)
\left\vert {\sqrt{{a^{\varepsilon }}}\varphi \in {H^{1}}\left( \Omega
\right) ,{\phi }\in {H_{0}^{1}}\left( \Omega \right) }\right. }\right\}
\subset L^{2}(\Omega )^{2},
\end{equation*}%
so that $iA^{\varepsilon }$ is self-adjoint on $L^{2}(\Omega )^{2}$ as
proved in \cite{brassart2010two}. The spectral equation (\ref{a2_1}) can be
recasted as a first-order system%
\begin{equation}
A^{\varepsilon }U^{\varepsilon }=i\mu ^{\varepsilon }U^{\varepsilon }\text{
~in }\Omega \text{ \ and }U_{2}^{\varepsilon }=0\text{ on }\partial \Omega ,
\label{a6_2}
\end{equation}%
where $U_{2}^{\varepsilon }$ is the second component of $U^{\varepsilon }$.
We observe that $\left\Vert \sqrt{\rho ^{\varepsilon }}w^{\varepsilon
}\right\Vert _{L^{2}\left( \Omega \right) }\leq \left\Vert \sqrt{\rho
^{\varepsilon }}\right\Vert _{L^{\infty }\left( \Omega \right) }$ and that $%
\left\Vert \frac{\sqrt{a^{\varepsilon }}\partial _{x}w^{\varepsilon }}{{i}%
\sqrt{\lambda ^{\varepsilon }}}\right\Vert _{L^{2}\left( \Omega \right)
}\leq M_{0}$ can be deduced from the weak formulation (\ref%
{1D-weakformulation1}), therefore $U^{\varepsilon }$ is uniformly bounded,%
\begin{equation}
\left\Vert U^{\varepsilon }\right\Vert _{L^{2}\left( \Omega \right)
}^{2}\leq M_{1}.  \label{1D-bounded-U}
\end{equation}%
We start our analysis from the system expressed in a distributional sense,
\begin{equation}
\int_{\Omega }{U^{\varepsilon }\cdot \left( {i\mu ^{\varepsilon
}-A^{\varepsilon }}\right) \Psi ~dx}=0,  \label{a6_4}
\end{equation}%
for all admissible test functions $\Psi $ $=\left( \varphi ,\psi \right) {%
\in H^{1}\left( \Omega \right) \times H_{0}^{1}\left( \Omega \right) }$. We
choose $\mu _{0}=\sqrt{\lambda ^{0}}$ and $\mu _{1}=\frac{\lambda ^{1}}{2\mu
_{0}}$, so $\mu ^{\varepsilon }$ can be decomposed as
\begin{equation}
\mu ^{\varepsilon }=\frac{\mu _{0}}{\varepsilon }+\mu _{1}+O\left(
\varepsilon \right) .  \label{a7_3}
\end{equation}%
The asymptotic spectral problem (\ref{Blochwaves}) is also restated as a
first order system by setting%
\begin{equation*}
A_{k}:=\left( {%
\begin{array}{cc}
0 & {\sqrt{a}\partial _{y}\left( \frac{1}{\sqrt{\rho }}.\right) } \\
\frac{1}{\sqrt{\rho }}{\partial _{y}\left( {\sqrt{a}.}\right) } & 0%
\end{array}%
}\right) \text{ and }n_{A_{k}}=\frac{1}{\sqrt{\rho }}\left(
\begin{array}{cc}
0 & {\sqrt{a}n}_{Y} \\
{\sqrt{a}n}_{Y} & 0%
\end{array}%
\right) ,
\end{equation*}%
and
\begin{equation}
e_{n}^{k}:=\frac{1}{\sqrt{2}}\left(
\begin{array}{c}
-i\frac{{s_{n}}}{{\ \sqrt{\lambda _{\left\vert n\right\vert }^{k}}}}\sqrt{a}%
\partial _{y}\left( {\phi _{\left\vert n\right\vert }^{k}}\right) \\
\sqrt{\rho }\phi _{\left\vert n\right\vert }^{k}%
\end{array}%
\right) ~\text{\ and }\mu _{n}^{k}=s_{n}\sqrt{\lambda _{\left\vert
n\right\vert }^{k}}\text{ for all }n\in
\mathbb{Z}
^{\ast },  \label{a3_2}
\end{equation}%
$s_{n}$ denoting the sign of $n.$ As proved in \cite{brassart2010two}, $%
iA_{k}$ is self-adjoint on the domain%
\begin{equation*}
D\left( {A_{k}}\right) :=\left\{ {\left( {\varphi ,\phi }\right) \in
L^{2}\left( Y\right) ^{2}|\sqrt{a}\varphi \in H_{k}^{1}\left( Y\right) ,}%
\frac{{\phi }}{\sqrt{\rho }}{\in H_{k}^{1}\left( Y\right) }\right\} \subset
L^{2}\left( Y\right) ^{2}.
\end{equation*}%
The Bloch wave spectral problem $\mathcal{P(}k\mathcal{)}$ is equivalent to
finding pairs $\left( \mu _{n}^{k},e_{n}^{k}\right) $ indexed by\thinspace $%
n\in
\mathbb{Z}
^{\ast }$ solution to%
\begin{equation}
\mathcal{Q(}k\mathcal{)}:A_{k}e_{n}^{k}=i\mu _{n}^{k}e_{n}^{k}\text{ \ in }Y%
\text{ with }e_{n}^{k}\in H_{k}^{1}\left( Y\right) ^{2}.
\label{Bloch wave first order}
\end{equation}%
The corresponding weak formulation is
\begin{equation}
\int_{Y}e_{n}^{k}\cdot \left( A_{k}-i\mu _{n}^{k}\right) \Psi \,dy=0\text{
for all }\Psi \in D\left( A_{k}\right) .  \label{Bloch-weak}
\end{equation}%
The relation between the operator $A^{\varepsilon }$ and the scaled operator
$A_{k}$ is obtained by considering any regular vector $\psi =\psi \left(
x,y\right) $ depending on both space scales,
\begin{equation}
A^{\varepsilon }\left( {\psi \left( {x,\frac{x}{\varepsilon }}\right) }%
\right) =\left( {\left( {\frac{1}{\varepsilon }A_{k}+B}\right) \psi }\right)
\left( {x,\frac{x}{\varepsilon }}\right) ,  \label{a3_3}
\end{equation}%
where the operator $B$ is defined as the result of the formal substitution
of $x-$derivatives by $y-$derivatives in $A_{k}$, i.e.
\begin{equation*}
B:=\left( {%
\begin{array}{cc}
0 & {\sqrt{a}\partial _{x}\left( \frac{1}{\sqrt{\rho }}.\right) } \\
\frac{1}{\sqrt{\rho }}{\partial _{x}\left( {\sqrt{a}.}\right) } & 0%
\end{array}%
}\right) .
\end{equation*}%
For any $n\in {%
\mathbb{Z}
}^{\ast }$ and $k\in Y^{\ast }$, $M_{n}^{k}:=\left\{ {i\in {%
\mathbb{Z}
}}^{\ast }\text{ }{|}\text{ }{\mu _{i}^{k}=\mu _{n}^{k}}\right\} $ is the
set of indices of eigenvectors related to the same eigenvalue ${\mu _{n}^{k}}
$.\ For all $k\in Y^{\ast }\diagdown \left\{ 0\right\} ,$ since $\mu
_{n}^{k}=\mu _{n}^{-k}$ then $M_{n}^{k}=M_{n}^{-k}$.

\begin{remark}
From now on, we shall assume that the weak limit of $S_{k}^{\varepsilon
}U^{\varepsilon }$\ in $L^{2}\left( \Omega \times Y\right) $ is not
vanishing to avoid eigenmodes related to the boundary spectrum (see
Proposition 7.7 in \cite{allaire1998bloch}).
\end{remark}

\begin{theorem}
\label{a7_7bis}For $k\in Y^{\ast }$, let $\left( \mu ^{\varepsilon
},U^{\varepsilon }\right) $ be solution of (\ref{a6_2}) then $\sum_{\sigma
\in I^{k}}S_{\sigma }^{\varepsilon }U^{\varepsilon }$ is bounded in $%
L^{2}\left( \Omega \times Y\right) $. For $\varepsilon \in E_{k}$, assuming
that the renormalized sequence $\varepsilon \mu ^{\varepsilon }$ satisfies
the decomposition (\ref{a7_3}) with $\mu _{0}=\mu _{n}^{k}$ an eigenvalue of
the Bloch wave spectrum, any weak limit $G_{k}$ of $\sum_{\sigma \in
I^{k}}S_{\sigma }^{\varepsilon }U^{\varepsilon }$ in $L^{2}\left( \Omega
\times Y\right) $ has the form%
\begin{equation}
G_{k}\left( {x,y}\right) =\sum_{\sigma \in I^{k}}\sum\limits_{m\in
M_{n}^{\sigma }}{u_{m}^{\sigma }\left( x\right) e_{m}^{\sigma }\left(
y\right) },  \label{a7_10}
\end{equation}%
where $\left( u_{m}^{\sigma }\right) _{m,\sigma }$ are the solutions of the
macroscopic equations (\ref{marco_1}, \ref{boundary_macro_1}) or (\ref%
{marco_1_0}, \ref{boundary_macro_1_0}).
\end{theorem}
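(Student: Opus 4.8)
The plan is to repeat, mutatis mutandis, the proof of Theorem~\ref{a7_71}, replacing the second order operators $P^{0},P^{1},P^{2}$ by the first order operators $A_{k}$ and $B$, the expansion (\ref{1D-A R v}) by (\ref{a3_3}), and $\lambda^{\varepsilon},\lambda^{0},\lambda^{1}$ by $\mu^{\varepsilon},\mu_{0},\mu_{1}$. First, the isometry (\ref{Pseudo_sk}) together with the uniform bound (\ref{1D-bounded-U}) gives $\Vert S_{\sigma}^{\varepsilon}U^{\varepsilon}\Vert_{L^{2}(\Omega\times Y)}=\Vert U^{\varepsilon}\Vert_{L^{2}(\Omega)}\leq M_{1}^{1/2}$ for every $\sigma\in I^{k}$, whence the claimed boundedness of $\sum_{\sigma\in I^{k}}S_{\sigma}^{\varepsilon}U^{\varepsilon}$ in $L^{2}(\Omega\times Y)$; up to a subsequence $S_{\sigma}^{\varepsilon}U^{\varepsilon}\rightharpoonup U^{\sigma}$ in $L^{2}(\Omega\times Y)$, with $U^{k}$ and $U^{-k}$ conjugate by Remark~\ref{Sk_conver}, and $G_{k}=\sum_{\sigma\in I^{k}}U^{\sigma}$.

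Second, I would identify each $U^{\sigma}$ as a Bloch eigenmode. In (\ref{a6_4}) one takes test functions $\Psi^{\varepsilon}:=\mathfrak{R}\Psi$ with $\Psi(x,y)$ smooth, $k$-quasi-periodic in $y$ and such that $\mathfrak{R}\Psi\in H^{1}(\Omega)\times H_{0}^{1}(\Omega)$; using (\ref{a3_3}), multiplying (\ref{a6_4}) by $\varepsilon$ and inserting (\ref{a7_3}), the terms of order $\varepsilon^{0}$ give
\begin{equation*}
\int_{\Omega}U^{\varepsilon}\cdot\mathfrak{R}\bigl((i\mu_{0}-A_{k})\Psi\bigr)\,dx=O(\varepsilon).
\end{equation*}
As $(i\mu_{0}-A_{k})\Psi$ is again $k$-quasi-periodic in $y$, Corollary~\ref{two-scale-converge} passes to the limit and yields $\int_{\Omega\times Y}U^{k}\cdot(i\mu_{0}-A_{k})\Psi\,dxdy=0$ for all such $\Psi$. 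Since $iA_{k}$ is self-adjoint on $D(A_{k})$, this very weak identity already forces $U^{k}(x,\cdot)\in D(A_{k})$ and $A_{k}U^{k}=i\mu_{0}U^{k}$ in $\Omega\times Y$, so no additional $y$-regularity is required here, in contrast with the second order Lemma~\ref{Lemme-decomposition-Bloch}. Because $U^{k}$ is assumed non-vanishing, $(\mu_{0},U^{k})$ solves $\mathcal{Q}(k)$ of (\ref{Bloch wave first order}), hence $\mu_{0}=\mu_{n}^{k}$ for some $n\in\mathbb{Z}^{\ast}$ and, $U^{\sigma}(x,\cdot)$ lying in the $i\mu_{0}$-eigenspace of $A_{k}$,
\begin{equation*}
U^{\sigma}(x,y)=\sum_{m\in M_{n}^{\sigma}}u_{m}^{\sigma}(x)\,e_{m}^{\sigma}(y),\qquad u_{m}^{\sigma}(x)=\int_{Y}U^{\sigma}(x,y)\cdot e_{m}^{\sigma}(y)\,dy\in L^{2}(\Omega),
\end{equation*}
with $u_{m}^{k}=\overline{u_{m}^{-k}}$ from the conjugacy of $S_{\pm k}^{\varepsilon}U^{\varepsilon}$ and of $e_{m}^{\pm k}$.

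Third, I would derive the macroscopic system by taking $\Psi^{\varepsilon}:=\mathfrak{R}\bigl(\sum_{\sigma\in I^{k}}\sum_{p\in M_{n}^{\sigma}}\psi_{p}^{\sigma}(x)e_{p}^{\sigma}(y)\bigr)$ with $\psi_{p}^{\sigma}\in H^{1}(\Omega)$; the condition $\mathfrak{R}\Psi\in H^{1}(\Omega)\times H_{0}^{1}(\Omega)$, after using the $\sigma$-quasi-periodicity of $e_{p}^{\sigma}$, the decomposition (\ref{epsilon_m}) and $l_{\varepsilon}^{k}\to l^{k}$, becomes
\begin{equation*}
\sum_{\sigma\in I^{k}}\sum_{p\in M_{n}^{\sigma}}\psi_{p}^{\sigma}(x)\,\phi_{|p|}^{\sigma}(0)\,e^{sign(\sigma)2i\pi l^{k}x/\alpha}=0\quad\text{on }\partial\Omega .
\end{equation*}
Since $(i\mu_{0}-A_{k})e_{p}^{\sigma}=0$ for $p\in M_{n}^{\sigma}$, the order $\varepsilon^{0}$ part of (\ref{a6_4}) multiplied by $\varepsilon$ cancels; dividing by $\varepsilon$, passing to the limit with Corollary~\ref{two-scale-converge} and substituting the modal decomposition of $U^{\sigma}$ gives
\begin{equation*}
\sum_{\sigma\in I^{k}}\sum_{m,p\in M_{n}^{\sigma}}\int_{\Omega\times Y}u_{m}^{\sigma}e_{m}^{\sigma}\cdot\bigl(i\mu_{1}-B\bigr)\bigl(\psi_{p}^{\sigma}e_{p}^{\sigma}\bigr)\,dxdy=0 .
\end{equation*}
Because $a$ and $\rho$ do not depend on $x$, $B(\psi_{p}^{\sigma}e_{p}^{\sigma})=(\partial_{x}\psi_{p}^{\sigma})\,\beta(e_{p}^{\sigma})$ for an explicit pointwise operator $\beta$ swapping and rescaling the two components, and evaluating the resulting $Y$-integrals by means of the definition (\ref{a3_2}) of $e_{n}^{k}$ and of the weak Bloch identity (\ref{weak_bloch_waves}) with $v=\phi_{|m|}^{\sigma}$ recovers the coefficients $b(\sigma,|m|,|p|)$ and $c(\sigma,|m|,|p|)$ of (\ref{def of b and c}), exactly as in Section~\ref{k_non_0_single_value}; after using $\mu_{1}=\lambda^{1}/(2\mu_{0})$ these are the macroscopic equations (\ref{marco_1}) (resp.\ (\ref{marco_1_0}) for $k=0$). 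Taking first $\psi_{p}^{\sigma}$ compactly supported shows that $\sum_{m}c(\sigma,|m|,|p|)\partial_{x}u_{m}^{\sigma}\in L^{2}(\Omega)$, which with the non-degeneracy assumption on $c$ gives $u_{m}^{\sigma}\in H^{1}(\Omega)$; then, with general admissible $\psi_{p}^{\sigma}$, one integration by parts in $x$ produces a boundary term on $\partial\Omega$ that, via $c(k,n,m)=-c(-k,m,n)$ and $c(0,p,p)=0$ and the above constraint, reduces to the determinant condition $\det(C_{1}u,\varphi)=0$ of Section~\ref{k_non_0_single_value}, i.e.\ to (\ref{boundary_macro_1}) (resp.\ (\ref{boundary_macro_1_0})). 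The conjugacy $u_{m}^{k}=\overline{u_{m}^{-k}}$ is preserved throughout, and (\ref{a7_10}) follows.

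The main obstacle is the boundary analysis of the last step: only the second component $U_{2}^{\varepsilon}$ vanishes on $\partial\Omega$, so the boundary term arising from integrating $B$ by parts couples the two components of $U^{\sigma}$ and of the test functions, and one must check that the constraint on the $\psi_{p}^{\sigma}$ forces it exactly into the determinant form of Section~\ref{k_non_0_single_value}; in the process one has to keep track of the signs $s_{n}$ and of the conjugations relating $e_{n}^{-k}$ to $e_{n}^{k}$, which is routine but delicate. By contrast, the identification of the Bloch eigenmode is here cleaner than in the second order situation, the regularity of the limit being automatic from the self-adjointness of $iA_{k}$.
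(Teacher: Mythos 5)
Your proposal follows the paper's own route essentially step by step: boundedness via the isometry (\ref{Pseudo_sk}) and the uniform bound (\ref{1D-bounded-U}), extraction of weak limits $U^{\sigma}$, identification of each $U^{\sigma}$ as a Bloch eigenmode with the modal decomposition (this is exactly the paper's Lemma \ref{Bloch_first}), and then the macroscopic derivation with test functions $\Re\bigl(\sum_{\sigma}\psi^{\sigma}e_{n}^{\sigma}\bigr)$, cancellation of the $\varepsilon^{-1}$ terms because $(i\mu_{0}-A_{\sigma})\Psi^{\sigma}=0$, and the collinearity/determinant argument for the boundary conditions, as in Section \ref{Bloch2}. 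The one genuine deviation is in the second step: the paper \emph{assumes} $U^{k}\in D(A_{k})$ as a hypothesis of Lemma \ref{Bloch_first} before integrating by parts, whereas you derive this membership from the skew-adjointness of $A_{k}$, arguing that the very weak identity $\int_{\Omega\times Y}U^{k}\cdot(i\mu_{0}-A_{k})\Psi\,dxdy=0$ places $U^{k}(x,\cdot)$ in the domain of the adjoint. This is a cleaner and in principle stronger statement, but it silently requires that the smooth $k$-quasi-periodic test functions you use form a core of $D(A_{k})$ for the graph norm; with $\rho$ only in $L^{\infty}$ (so that $\phi/\sqrt{\rho}\in H_{k}^{1}$ does not imply smoothness of $\phi$) this density is not entirely free and deserves a word of justification. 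Apart from that caveat, the argument is correct and matches the paper's.
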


Therefore, the physical solution $U^{\varepsilon }$ can be approximated by
\begin{equation}
U^{\varepsilon }\left( x\right) \approx \sum_{\sigma \in
I^{k}}\sum\limits_{m\in M_{n}^{\sigma }}{u_{m}^{\sigma }\left( x\right)
e_{m}^{\sigma }\left( {\frac{x}{\varepsilon }}\right) }.
\label{Physic_approximation1}
\end{equation}

\begin{proof}
For a given $k\in Y^{\ast }$, let $U^{\varepsilon }$ be solution of \ (\ref%
{a6_2}) which is bounded in $L^{2}(\Omega )$, the property (\ref{Pseudo_sk})
yields the boundness of $\left\Vert S_{\sigma }^{\varepsilon }U^{\varepsilon
}\right\Vert _{L^{2}\left( \Omega \times Y\right) }$. So there exist $%
U^{\sigma }\in $ $L^{2}(\Omega \times Y)^{2}$ such that, up the extraction
of a subsequence, $S_{\sigma }^{\varepsilon }U^{\varepsilon }$ tends weakly
to $U^{\sigma }$ in $L^{2}(\Omega \times Y)^{2}$ and hence, $%
\sum\limits_{\sigma \in I^{k}}S_{\sigma }^{\varepsilon }U^{\varepsilon }$
converges to $G_{k}\left( x,y\right) =\sum\limits_{\sigma \in
I^{k}}U^{\sigma }\left( x,y\right) $. Using the decomposition (\ref%
{decompose_bloch}) of $U^{\sigma }$ in the forthcoming Lemma \ref%
{Bloch_first},%
\begin{equation*}
G_{k}\left( x,y\right) =\sum\limits_{\sigma \in I^{k}}\sum\limits_{m\in
M_{n}^{\sigma }}u_{m}^{\sigma }\left( x\right) e_{m}^{\sigma }\left( y\right)
\end{equation*}%
The macroscopic problem solved by the coefficients $\left( u_{m}^{\sigma
}\right) _{\sigma ,m}$ is derived in Section \ref{Bloch2}.
\end{proof}

\subsection{Model derivation}

\subsubsection{Modal decomposition on the Bloch modes \label{Bloch1}}

\begin{lemma}
\label{Bloch_first}Let a sequence $\left( \mu ^{\varepsilon },U^{\varepsilon
}\right) $ be solution of (\ref{a6_2}) and satisfies (\ref{a7_3}) with $\mu
_{0}=\mu _{n}^{k}$ for given $n\in \mathbb{Z}^{\ast }$ and $k\in Y^{\ast }$,
we extract a subsequence of $\varepsilon $, still denoted by $\varepsilon $,
such that $S_{k}^{\varepsilon }U^{\varepsilon }$ converges weakly to $U^{k}$
in $L^{2}\left( \Omega \times Y\right) ^{2}$. If $U^{k}\in D\left(
A_{k}\right) $ then $\left( \mu _{n}^{k},U^{k}\right) $ is solution of the
Bloch wave equation (\ref{Bloch wave first order}) and $U^{k}$ admits the
modal decomposition
\begin{equation}
U^{k}\left( x,y\right) =\sum_{m\in M_{n}^{k}}{u}_{m}^{k}\left( x\right)
e_{m}^{k}\left( y\right) \text{ with }u_{m}^{k}\in L^{2}\left( \Omega
\right) .  \label{decompose_bloch}
\end{equation}
\end{lemma}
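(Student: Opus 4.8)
The plan is to adapt the proof of Lemma~\ref{Lemme-decomposition-Bloch} to the first order setting, starting this time directly from the distributional formulation (\ref{a6_4}), which is already a weak form (so no preliminary integration by parts is needed). I would test (\ref{a6_4}) with $\mathfrak{R}\Psi$, where $\Psi=\Psi(x,y)$ is of class $C^{2}$, compactly supported in $x\in\Omega$, and $k$-quasi-periodic in $y$; for such $\Psi$ and each fixed $\varepsilon$ the function $\mathfrak{R}\Psi$ is an admissible test function for (\ref{a6_4}), since its second component vanishes near $\partial\Omega$ and $\sqrt{a^{\varepsilon}}$ times its first component lies in $H^{1}(\Omega)$ (using $a\in W^{1,\infty}$).

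Next, I would combine the scaling relation (\ref{a3_3}), i.e.\ $A^{\varepsilon}\mathfrak{R}\Psi=\tfrac1\varepsilon\mathfrak{R}(A_{k}\Psi)+\mathfrak{R}(B\Psi)$, with the expansion (\ref{a7_3}) of $\mu^{\varepsilon}$, which gives
\[
(i\mu^{\varepsilon}-A^{\varepsilon})\mathfrak{R}\Psi=\tfrac1\varepsilon\,\mathfrak{R}\big((i\mu_{0}-A_{k})\Psi\big)+\mathfrak{R}\big((i\mu_{1}-B)\Psi\big)+O(\varepsilon)\quad\text{in }L^{2}(\Omega).
\]
Multiplying the tested identity by $\varepsilon$ and using the uniform bound $\|U^{\varepsilon}\|_{L^{2}(\Omega)}^{2}\le M_{1}$ from (\ref{1D-bounded-U}), the $\mathfrak{R}((i\mu_{1}-B)\Psi)$ term and the remainder contribute $O(\varepsilon)$, so $\int_{\Omega}U^{\varepsilon}\cdot\mathfrak{R}((i\mu_{0}-A_{k})\Psi)\,dx\to0$. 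Since $a,\rho$ are $Y$-periodic, $(i\mu_{0}-A_{k})\Psi$ is again $C^{1}$ and $k$-quasi-periodic in $y$, so Corollary~\ref{two-scale-converge} applies along the subsequence on which $S_{k}^{\varepsilon}U^{\varepsilon}\rightharpoonup U^{k}$ and yields
\[
\int_{\Omega\times Y}U^{k}\cdot(A_{k}-i\mu_{0})\Psi\,dxdy=0 .
\]

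To localize in $x$, I would take $\Psi(x,y)=\psi(x)\Phi(y)$ with $\psi\in C_{c}^{\infty}(\Omega)$ and $\Phi$ a $C^{2}$, $k$-quasi-periodic function on $Y$; letting $\psi$ range over $C_{c}^{\infty}(\Omega)$, and then using a countable dense family of such $\Phi$, gives $\int_{Y}U^{k}(x,\cdot)\cdot(A_{k}-i\mu_{0})\Phi\,dy=0$ for a.e.\ $x$ and all $\Phi$ in a dense subset of $D(A_{k})$. For a.e.\ $x$ this is precisely the weak Bloch formulation (\ref{Bloch-weak}) at the eigenvalue $\mu_{0}=\mu_{n}^{k}$; combined with the hypothesis $U^{k}\in D(A_{k})$ (which in particular carries the $k$-quasi-periodicity of $U^{k}$ in $y$) and the equivalence between (\ref{Bloch wave first order}) and its weak form (\ref{Bloch-weak}), it gives $A_{k}U^{k}(x,\cdot)=i\mu_{n}^{k}U^{k}(x,\cdot)$ for a.e.\ $x$, i.e.\ $(\mu_{n}^{k},U^{k})$ solves (\ref{Bloch wave first order}). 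Hence, for a.e.\ $x$, $U^{k}(x,\cdot)$ lies in the finite-dimensional eigenspace $\mathrm{span}\{e_{m}^{k}:m\in M_{n}^{k}\}$, and writing $U^{k}(x,y)=\sum_{m\in M_{n}^{k}}u_{m}^{k}(x)\,e_{m}^{k}(y)$ with $u_{m}^{k}$ the coordinates in that family, the membership $U^{k}\in L^{2}(\Omega\times Y)^{2}$ and Cauchy--Schwarz give $u_{m}^{k}\in L^{2}(\Omega)$, which is (\ref{decompose_bloch}).

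As in the second order case, the main obstacle is not the limit passage but the bookkeeping around the operator $A_{k}$: one must verify that $\mathfrak{R}\Psi$ genuinely belongs to $D(A^{\varepsilon})$ for the test functions used, that the chosen smooth $k$-quasi-periodic product functions are dense enough in $D(A_{k})$ to transfer $A_{k}$ from $\Psi$ onto $U^{k}$ --- here one uses that $\partial_{y}$ is skew-adjoint on $H_{k}^{1}(Y)$ for the hermitian pairing (\ref{hermitian product}), so no $\partial Y$ boundary terms arise --- and, if one prefers not to read the $y$-quasi-periodicity off $U^{k}\in D(A_{k})$, the use of test functions not vanishing near $\partial Y$ to recover the $k$-quasi-periodicity of $U^{k}$ and $A_{k}U^{k}$.
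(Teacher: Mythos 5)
Your proposal is correct and follows essentially the same route as the paper: test (\ref{a6_4}) with $\mathfrak{R}\Psi$ for product-form $k$-quasi-periodic $\Psi$, use the scaling relation (\ref{a3_3}) and the expansion (\ref{a7_3}), multiply by $\varepsilon$, pass to the limit via Corollary \ref{two-scale-converge} to obtain the weak Bloch formulation, then use $U^{k}\in D(A_{k})$ to reach the strong form and project onto the finite-dimensional eigenspace. The only cosmetic difference is that you localize in $x$ before identifying the eigenspace, whereas the paper integrates by parts on $\Omega\times Y$ and reads the quasi-periodicity off the boundary terms; both are sound.
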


\begin{proof}
For each $k\in Y^{\ast }$, taking $\Psi \left( x,y\right) :=\theta (x)\phi
(y)$ with $\theta (x)\in C_{c}^{\infty }\left( \Omega \right) $ and $\phi
(y)\in {C^{\infty }}{\left( Y\right) ^{2}}$ $k-$quasi-periodic in $y$,
considering $\Re \Psi $ as a test functions in (\ref{a6_4}), and using (\ref%
{a3_3},\ref{a7_3}),
\begin{equation*}
\int_{\Omega }{U^{\varepsilon }\cdot \Re \left( {i\frac{{\mu _{0}}}{%
\varepsilon }+i\mu _{1}-\frac{{A_{k}}}{\varepsilon }-B}\right) \Psi
~dx+O\left( \varepsilon \right) =0}\text{.}
\end{equation*}%
Multiplying by $\varepsilon $%
\begin{equation*}
\int_{\Omega }{U^{\varepsilon }\cdot \Re \left( {i{\mu _{0}}-{A_{k}}}\right)
\Psi ~dx+O\left( \varepsilon \right) =0},
\end{equation*}%
and passing to the limit thanks to Corollary \ref{two-scale-converge},
\begin{equation*}
\frac{1}{\left\vert Y\right\vert }\int_{\Omega \times Y}{U^{k}\cdot \left( {%
i\mu _{0}-A_{k}}\right) \Psi ~dxdy=0}
\end{equation*}%
which is the weak formulation of the Bloch wave equations. If in addition ${%
U^{k}\in D}\left( A_{k}\right) ,$ integrating by parts yields
\begin{equation}
\frac{1}{\left\vert Y\right\vert }\int_{\Omega \times Y}{\left( {A_{k}-i\mu
_{0}}\right) U^{k}\cdot \Psi ~dxdy}-\frac{1}{\left\vert Y\right\vert }%
\int_{\Omega \times \partial Y}{U^{k}\cdot n_{A_{k}}\Psi ~dxdy=0}
\label{a8_3}
\end{equation}%
\ providing in turn the strong formulation,%
\begin{equation}
A_{k}U^{k}=i{\mu _{0}}U^{k}\quad \text{in}\quad \Omega \times Y.
\label{a8_2}
\end{equation}%
Since the product of a periodic function by a $k-$quasi-periodic function is
$k-$quasi-periodic then $n_{A_{k}}\Psi $ is $k-$quasi-periodic in $y$.
Therefore, $U^{k}$ is $k-$quasi-periodic$\,\ $in$\,\ y$ and finally is a
Bloch eigenvector in $y$. By projection, it can be decomposed as
\begin{equation*}
U^{k}\left( x,y\right) =\sum_{m\in M_{n}^{k}}{u}_{m}^{k}\left( x\right)
e_{m}^{k}\left( y\right) \text{ with }u_{m}^{k}=\frac{1}{b\left(
k,m,m\right) }\int_{Y}U^{k}\cdot e_{m}^{k}\text{ }dy\in L^{2}\left( \Omega
\right) \text{.}
\end{equation*}
\end{proof}

\subsubsection{Derivation of the macroscopic equation \label{Bloch2}}

The macroscopic equation is stated for each $k\in Y^{\ast }$ and each
eigenvalue $\mu _{n}^{k}$ of the Bloch wave spectral problem $\mathcal{Q(}k%
\mathcal{)}$. We pose
\begin{equation}
\kappa \left( k,n,m\right) =\frac{-ic\left( k,n,m\right) }{2\mu _{0}}\text{
for }m\in M_{n}^{k}  \label{ka}
\end{equation}%
where $c\left( k,n,m\right) $ is defined in (\ref{def of b and c}) and
notice that
\begin{gather*}
\kappa \left( k,n,m\right) =-\kappa \left( -k,m,n\right) ,\text{ }\kappa
\left( k,n,m\right) =-\overline{\kappa \left( -k,n,m\right) },\text{ } \\
\kappa \left( k,n,m\right) =-\overline{\kappa \left( k,m,n\right) },\text{
and }\kappa \left( 0,n,n\right) =0\text{.}
\end{gather*}%
For the sake of simplicity, we do the proof for $n\in
\mathbb{Z}
^{\ast +}$ only and denote by $\kappa \left( k,n\right) =\kappa \left(
k,n,n\right) $ and $\kappa \left( n,m\right) =\kappa \left( 0,n,m\right) $.
For general $n$, the proof is the same but $\phi _{n}^{k}$ is replaced by $%
\phi _{\left\vert n\right\vert }^{k}$.

\paragraph{Case $k\neq 0$}

The pairs $\left( \mu _{n}^{k},e_{n}^{k}\right) $ and $\left( \mu
_{n}^{-k},e_{n}^{-k}\right) $ are the eigenmodes of the spectral equations $%
\mathcal{Q(\pm }k\mathcal{)}$ in (\ref{Bloch wave first order})
corresponding to the eigenvalue $\mu _{0}=\mu _{n}^{k}=\mu _{n}^{-k}$. We
pose $\Psi ^{\varepsilon }=\Re \left( \Psi ^{k}+\Psi ^{-k}\right) \in $ $%
H^{1}(\Omega )\times H_{0}^{1}(\Omega )$ as a test function in the weak
formulation (\ref{a6_4}), with each $\Psi ^{\sigma }\left( x,y\right) =\psi
^{\sigma }\left( x\right) e_{n}^{\sigma }(y)$ where $\psi ^{\sigma }\in
H^{1}(\Omega )$ and satisfies the boundary conditions,%
\begin{equation*}
\sum\limits_{\sigma }\psi ^{\sigma }\left( x\right) \phi _{n}^{\sigma
}\left( \frac{x}{\varepsilon }\right) =0\text{ on }\partial \Omega .
\end{equation*}%
Notice that this condition is related to the second component of $\Psi
^{\varepsilon }$ only. Proceeding as in Section \ref{k_non_0_single_value}
yields (\ref{boundary condition on test function}). Since $\left( i\mu
_{0}-A_{\sigma }\right) \Psi ^{\varepsilon }=0$ for all $\sigma ,$ applying (%
\ref{a7_3}, \ref{a3_3}), then Equation (\ref{a6_4}) yields
\begin{equation}
\sum_{\sigma }\int_{\Omega }{U^{\varepsilon }\cdot \Re \left( {i\mu _{1}-B}%
\right) \Psi }^{\sigma }~{dx+}O\left( \varepsilon \right) =0\text{.}
\label{a8_7}
\end{equation}%
But ${\left( {i{\mu _{1}}-B}\right) {\Psi }^{\sigma }}$ is $\sigma -$%
quasi-periodic so passing to the limit thanks to Corollary \ref%
{two-scale-converge},%
\begin{equation}
\frac{1}{{\left\vert Y\right\vert }}\sum_{\sigma }\int_{\Omega \times Y}{{%
U^{\sigma }}\cdot \left( {i{\mu _{1}}-B}\right) {{\Psi }^{\sigma }}dxdy}=0.
\label{a8_9}
\end{equation}%
From Lemma \ref{Bloch_first}, $U^{\sigma }$ is decomposed as
\begin{equation*}
U^{\sigma }\left( x,y\right) ={u}_{n}^{\sigma }\left( x\right) e_{n}^{\sigma
}\left( y\right) .
\end{equation*}%
After replacement,

\begin{equation*}
\sum_{\sigma }\int_{\Omega }\left( -{i{\mu _{1}b}}\left( \sigma ,n\right) {u}%
_{n}^{\sigma }\cdot {\psi }^{\sigma }{{+\kappa \left( \sigma ,n\right)
u_{n}^{\sigma }\cdot \partial }}_{x}{\psi }^{\sigma }\right) {dx}=0
\end{equation*}%
for all ${\psi }^{\sigma }\in H^{1}\left( \Omega \right) $ fulfilling (\ref%
{boundary condition on test function}). Moreover, if ${{u_{n}^{\sigma }\in H}%
}^{1}\left( \Omega \right) $ it satisfies the strong form of the internal
equations%
\begin{equation}
{\kappa }\left( \sigma ,n\right) {{{\partial }_{x}u_{n}^{\sigma }}}-{i{\mu
_{1}}b}\left( \sigma ,n\right) {u}_{n}^{\sigma }=0\,\ \text{in }\Omega \,%
\text{\ for all }\sigma \in I^{k},  \label{macroscopic}
\end{equation}%
and the boundary conditions
\begin{equation*}
\sum_{\sigma }{\kappa }\left( \sigma ,n\right) {{u_{n}^{\sigma }\cdot }\psi }%
^{\sigma }=0\text{ on }\partial \Omega \text{.}
\end{equation*}%
Following the same calculations as in Section \ref{k_non_0_single_value},
with the matrices $C_{1}=diag\left( {\kappa }\left( \sigma ,n\right) \right)
$, $C_{2}=diag\left( {b}\left( \sigma ,n\right) \right) $ and the vectors $%
u=\left( {{{u}_{n}^{\sigma }}}\right) _{\sigma },\psi =\left( \psi ^{\sigma
}\right) _{\sigma },\varphi =\left( \phi ^{\sigma }\left( 0\right)
e^{sign\left( \sigma \right) 2i\pi x\frac{l^{k}}{\alpha }}\right) _{\sigma }$%
, (\ref{macroscopic}) is written on the matrix form%
\begin{equation*}
C_{1}{{{\partial }_{x}{u}}}={i{\mu _{1}C}}_{2}{u}\,\ \text{in }\Omega \,%
\text{,}
\end{equation*}%
with boundary condition
\begin{equation*}
C_{1}{u}\left( x\right) {.}\overline{\psi }\left( x\right) =0\,\text{\ on }%
\partial \Omega \text{ for all }\psi \text{ such that }\overline{\varphi }%
\left( x,0\right) .\overline{\psi }\left( x\right) =0\,\text{\ on }\partial
\Omega \text{.}
\end{equation*}%
Equivalently, $C{u}\left( x\right) $ is collinear with $\overline{\varphi }%
\left( x,0\right) $ yielding the boundary conditions
\begin{equation}
u_{n}^{k}\left( x\right) \phi _{n}^{k}\left( 0\right) e^{2i\pi \frac{l^{k}x}{%
\alpha }}+u_{n}^{-k}\left( x\right) \phi _{n}^{-k}\left( 0\right) e^{-2i\pi
\frac{l^{k}x}{\alpha }}=0\text{ on }\partial \Omega  \label{macro_k_1}
\end{equation}%
after remarking that ${\kappa }\left( \sigma ,n\right) \neq 0$. Finally,
with (\ref{ka}) and $\lambda ^{1}=2\mu _{0}\mu _{1}$ the macroscopic problem
(\ref{marco_1}, \ref{boundary_macro_1}) is recovered.

\paragraph{ Case $k=0$ \label{k=0_first_order}}

We adopt the same simplifications of notations that in Section \ref{Case k=0}%
. Let $e_{n}$ and $e_{m}$ be the Bloch eigenmodes of $\mathcal{Q(}0\mathcal{)%
}$ in (\ref{Bloch wave first order}) regarding the double eigenvalue\textbf{%
\ }$\mu _{0}=\mu _{n}=\mu _{m}$. In this case $M_{n}^{0}=\left\{ n,m\right\}
$. Taking $\Psi ^{\varepsilon }=\sum\limits_{p\in M_{n}^{0}}\Re \left( \Psi
_{p}\right) \in H^{1}\left( \Omega \right) \times H_{0}^{1}\left( \Omega
\right) $ as a test function with $\Psi _{p}\left( x,y\right) =\psi
_{p}\left( x\right) e_{p}(y)$ and $\psi _{p}\in H^{1}(\Omega )$. Due to the
periodicity of $\phi _{p},$ the second component of $\Psi ^{\varepsilon }$
satisfies the boundary conditions
\begin{equation}
\sum_{p\in M_{n}^{0}}\psi _{p}\left( x\right) \phi _{p}\left( 0\right) =0%
\text{ on }\partial \Omega .  \label{boundary_first_0}
\end{equation}%
Following similar calculations as for the case $k\neq 0$, the weak limit ${U}%
^{0}$ of ${S_{0}^{\varepsilon }{U^{\varepsilon }}}$ in $L^{2}(\Omega \times
Y)^{2}$ is%
\begin{equation*}
U^{0}\left( x,y\right) =\sum_{p\in M_{n}^{0}}{u}_{p}\left( x\right)
e_{p}\left( y\right)
\end{equation*}%
and $u_{p}$ is solution to the weak formulation%
\begin{equation*}
{{\sum_{q\in M_{n}^{0}}}}\int_{\Omega }-{i{\mu _{1}b}}\left( p,q\right) {u}%
_{q}\cdot {\psi }_{p}+{{\ {\kappa }\left( p,q\right) u_{q}\cdot {\partial }%
_{x}}}\psi _{p}~{dx}=0
\end{equation*}
for all ${\psi }_{p}\in H^{1}\left( \Omega \right) ~$with $p\in M_{n}^{0}$.
If $u_{q}\in H^{1}\left( \Omega \right) $ it is a solution to the internal
equations%
\begin{equation}
\sum_{q\in M_{n}^{0}}{{\ {\kappa }\left( p,q\right) {\partial }_{x}u_{q}}}-{i%
{\mu _{1}}\ {b}\left( p,q\right) u}_{q}=0\,\ \text{in }\Omega \,\text{ for }%
p\in M_{n}^{0},  \label{periodic_macro_equation_1}
\end{equation}%
and to the boundary conditions%
\begin{equation*}
\int_{\partial \Omega }\sum_{p,q\in M_{n}^{0}}{{\ {\kappa }\left( p,q\right)
u}}_{q}{{\cdot }\psi }_{p}~dx=0.
\end{equation*}%
Here, with $C_{1}=\left( {{\ {\kappa }\left( p,q\right) }}\right) _{p,q}$, $%
C_{2}=\left( {\ b\left( p,q\right) }\right) _{p,q}$, $u=\left( {{{u}_{p}}}%
\right) _{p},~\psi =\left( {\psi }_{p}\right) _{p},~\phi =\left( \phi
_{p}\right) _{p}$,%
\begin{equation*}
C_{1}{{{\partial }_{x}{u}}}={i{\mu _{1}C}}_{2}{u}\,\ \text{in }\Omega \,%
\text{,}
\end{equation*}%
\begin{equation*}
\text{and }C{u}\left( x\right) {.}\overline{\psi }\left( x\right) =0\,\
\text{on }\partial \Omega \text{ for all }\psi \text{ such that }\phi \left(
0\right) .\overline{\psi }\left( x\right) =0\text{ on }\partial \Omega .
\end{equation*}%
But {{${\kappa }\left( p,p\right) $}}${=0}$, therefore
\begin{equation}
u_{n}\left( x\right) \phi _{n}\left( 0\right) +u_{m}\left( x\right) \phi
_{m}\left( 0\right) =0\text{ on }\partial \Omega .
\label{periodic_boundary_!}
\end{equation}%
As for $k\neq 0$, these macroscopic equations are equivalent to (\ref%
{marco_1_0}, \ref{boundary_macro_1_0}).

\section{Numerical simulations\label{numerical_simulation}}

We report simulations regarding comparisons of physical eigenmodes and their
approximation by two-scale modes for $\rho =1$. In Subsection \ref{problem1}%
, for each given high frequency physical eigenelement a two-scale
eigenelement realizing a good approximation is identified. This shows that
the two-scale model can actually be used as an approximation of the complete
high-frequency spectra. Conversely, Subsection \ref{problem2} addresses the
modeling problem i.e. it introduces a way to generate approximations of
high-frequency spectra from the two-scale model only. Finally, in \ref%
{problem3} the order of convergence with respect to $\varepsilon $ is
analyzed. The next section describes the main simulation parameters.

\subsection{Simulation methods and conditions}

Both, the physical spectral problem and the Bloch wave spectral problem are
discretized by a quadratic finite element method. The number of elements are
respectively denoted $N_{phys}$ and $N_{bloch}$. The implementation of the $%
k-$quasi-periodic boundary condition is achieved by elimination of the last
degree of freedom. More precisely, for $n\in \left\{
1,...,2N_{bloch}+1\right\} $ the node indices, $\phi _{n}$ a degree of
freedom of $\phi \ $a Bloch eigenmode and $\varphi _{n}$ the corresponding
quadratic Lagrange interpolation function,
\begin{equation*}
\phi \left( y\right) \simeq \sum\limits_{n=2}^{2N_{bloch}}\phi _{n}\varphi
_{n}+\phi _{1}\varphi _{1}+\phi _{2N_{bloch}+1}\varphi _{2N_{bloch}+1}\text{.%
}
\end{equation*}%
Using the relation $\phi \left( 1\right) =e^{2i\pi k}\phi \left( 0\right) $
and taking $\varphi _{1}+e^{2i\pi k}\varphi _{2N_{bloch}+1}$ as the first
base function allows to eliminate $\phi _{2N_{bloch}+1}$,%
\begin{equation*}
\phi \left( y\right) \simeq \sum\limits_{n=2}^{2N_{bloch}}\phi _{n}\varphi
_{n}+\phi _{1}\left( \varphi _{1}+e^{2i\pi k}\varphi _{2N_{bloch}+1}\right)
\text{.}
\end{equation*}%
The sets\ of indices considered in the simulations of high frequency
physical modes and Bloch modes are denoted by $\mathcal{J}^{\varepsilon }$
and $J^{k}$, the former being generally included in $(\alpha /2\varepsilon
,N_{phys}/2)$. The Bloch modes are calculated for $k\geq 0$ only, and the
other cases can be deduced by conjugation. For each Bloch eigenmode $\left(
\lambda _{n}^{k},\phi _{n}^{k}\right) $, the macroscopic solutions $\left(
\lambda ^{1,\ell },u_{m,\ell }^{k}\right) _{m,\ell }$ are given in Section %
\ref{macro-solution} with $\delta =1$ and $d_{2}=\phi _{m}^{0}\left(
0\right) $ for any $m$ such that $\lambda _{m}^{k}=\lambda _{n}^{k}$ and $%
\ell \in
\mathbb{Z}
$. In fact, according to Remark \ref{interval for lambda^1} the index $\ell $
should vary in $J_{n}^{k}=\left[ \frac{2k}{\varepsilon }\right] +\left\{
-r,...,r\right\} ,$ for a small integer $r$, so that only the first
macroscopic eigenmodes be taken into account. In the next discussions, we
use the following notations for the two-scale approximations of the
eigenvalues and eigenmodes exhibiting clearly their parameters $\varepsilon
,k,n$ and $\ell $,%
\begin{equation}
\gamma _{n,\ell }^{\varepsilon ,k}:=\lambda _{n}^{k}+\varepsilon \lambda
^{1,\ell }\text{\ and }\psi _{n,\ell }^{\varepsilon ,k}\left( x\right)
:=\sum_{\sigma \in I^{k}}\sum\limits_{m}{u_{m,\ell }^{\sigma }\left(
x\right) \phi _{m}^{\sigma }\left( {\frac{x}{\varepsilon }}\right) }\text{
for\ }\ell \in J_{n}^{k}\text{, }n\in J^{k}.  \label{num_3}
\end{equation}%
In the simulations reported in Sections \ref{problem1} and \ref{problem2}
only one physical problem is used, namely $\Omega =\left( 0,1\right) $, $%
a^{\varepsilon }\left( x\right) =\sin \left( 2\pi x/\varepsilon \right) +2$,
50 cells (i.e. $\varepsilon =1/50$), and $N_{phys}=2,000$. Other number of
cells are used in Section \ref{problem3} for the convergence analysis.
Consequently, the coefficient of the Bloch wave spectral problem is $a\left(
y\right) =\sin \left( 2\pi y\right) +2$. The set $Y^{\ast }$ of positive
wave numbers in $Y^{\ast }$ is discretized by $L_{125}^{\ast +}=\left\{
0,...,62/125\right\} $ with step $\Delta _{k}=1/125$ and $N_{bloch}=50$. The
subset of macroscopic eigenvalues is restricted by $r=15$.

The first ten graphs $(k\mapsto \lambda _{n}^{k})_{n=1,...,10}$ of Bloch
eigenvalues are described in Figure \ref{eigenvalue}. The graphs\ are
symmetric about the axis $k=0$ which confirms that $\lambda _{n}^{k}=\lambda
_{n}^{-k}$ as remarked in Notation \ref{Conjugate of a Bloch mode}.
Moreover, all eigenvalues $\lambda _{n}^{k}$ are simple for $k\neq 0$ and
double for $k\in \left\{ 0,\pm \frac{1}{2}\right\} $.
\begin{figure}[h]
\centering
\includegraphics[width=7cm]{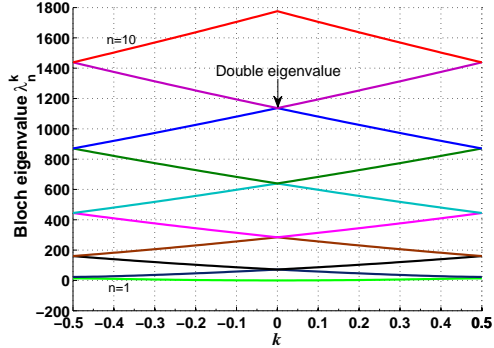}
\caption{First ten eigenvalues of the Bloch wave spectral problem.}
\label{eigenvalue}
\end{figure}

\subsection{Approximation of physical modes by two-scale modes \label%
{problem1}}

We discuss the approximation of a given solution $\left( \lambda
_{p}^{\varepsilon },w_{p}^{\varepsilon }\right) $ of Equation (\ref{a2_1})
for a given value of $\varepsilon $. From Remark \ref{exact_solution} we
expect to show numerically that there exists a suitable pair $(k,n)$ such
that the equality $\left( \lambda _{p}^{\varepsilon },w_{p}^{\varepsilon
}\right) =(\gamma _{n,\ell }^{\varepsilon ,k},\psi _{n,\ell }^{\varepsilon
,k})$ is exact with $(\gamma _{n,\ell }^{\varepsilon ,k},\psi _{n,\ell
}^{\varepsilon ,k})$ defined in (\ref{num_3}) and $\lambda ^{1,\ell }=0$.
Moreover, in the perspective of Remark \ref{Application to the wave equation}%
, $k$ varies in $L_{125}^{\ast +}$ only and approximations with $\lambda
^{1,\ell }\neq 0$ are expected. Whatever if $\lambda ^{1,\ell }$ vanishes or
not, we expect to search approximations for both eigenvalues and
eigenvectors which turns to be an multi-objective optimization problem that
might be solved by a dedicated method. However, to reduce the computational
cost, we propose an alternate approach consisting in minimizing the error on
eigenvalues in the approximation (\ref{eigenvalue_decomposition}),%
\begin{equation}
er_{value}\left( k\right) =\min_{n\in \mathbb{N},\text{ }\ell \in
J_{n}^{k}}\left\vert \frac{\varepsilon ^{2}\lambda _{p}^{\varepsilon
}-\gamma _{n,\ell }^{\varepsilon ,k}}{\varepsilon ^{2}\lambda
_{p}^{\varepsilon }}\right\vert ,  \label{num4}
\end{equation}%
for each $k\in L_{125}^{\ast +}$, and then in finding which one minimizes%
\begin{equation*}
er_{vector}\left( k\right) =\frac{\left\Vert w_{p}^{\varepsilon }-\psi
_{n_{k},\ell _{k}}^{\varepsilon ,k}\right\Vert _{L^{2}\left( \Omega \right) }%
}{\left\Vert w_{p}^{\varepsilon }\right\Vert _{L^{\infty }(\Omega )}}
\end{equation*}%
the error on eigenvectors in the approximation (\ref{Physic_approximation})
where $\ell _{k},$ $n_{k}$ are the optimal arguments in (\ref{num4}). The
optimal error on eigenvectors is then%
\begin{equation}
er_{vector}=\min_{k\in L_{125}^{\ast +}}er_{vector}\left( k\right) .
\label{min_error_fixed_p}
\end{equation}%
\begin{figure}[h]
\par
\begin{center}
\subfigure{
                 \includegraphics[width=8cm]{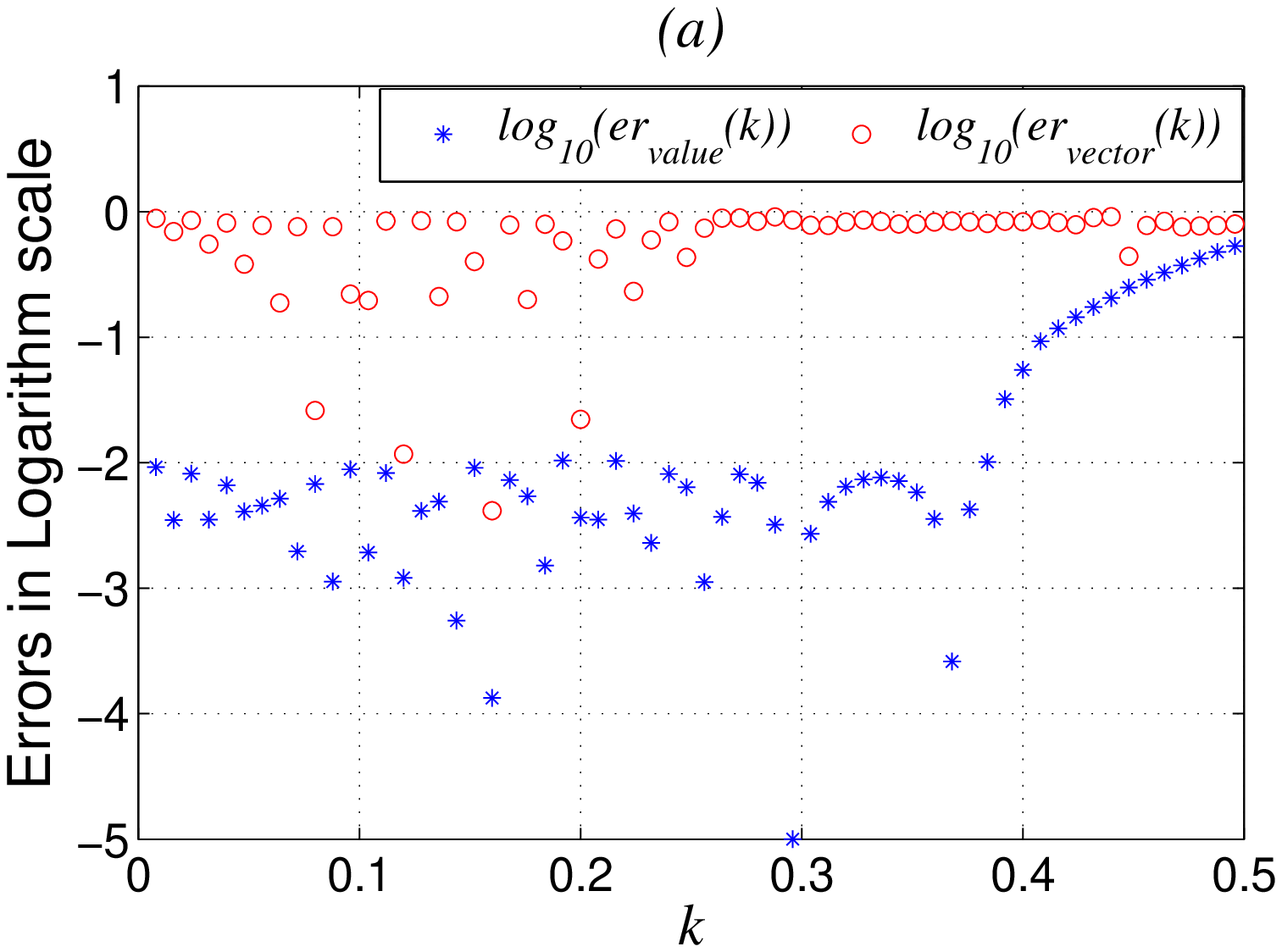}}
\subfigure{
                 \includegraphics[width=8cm]{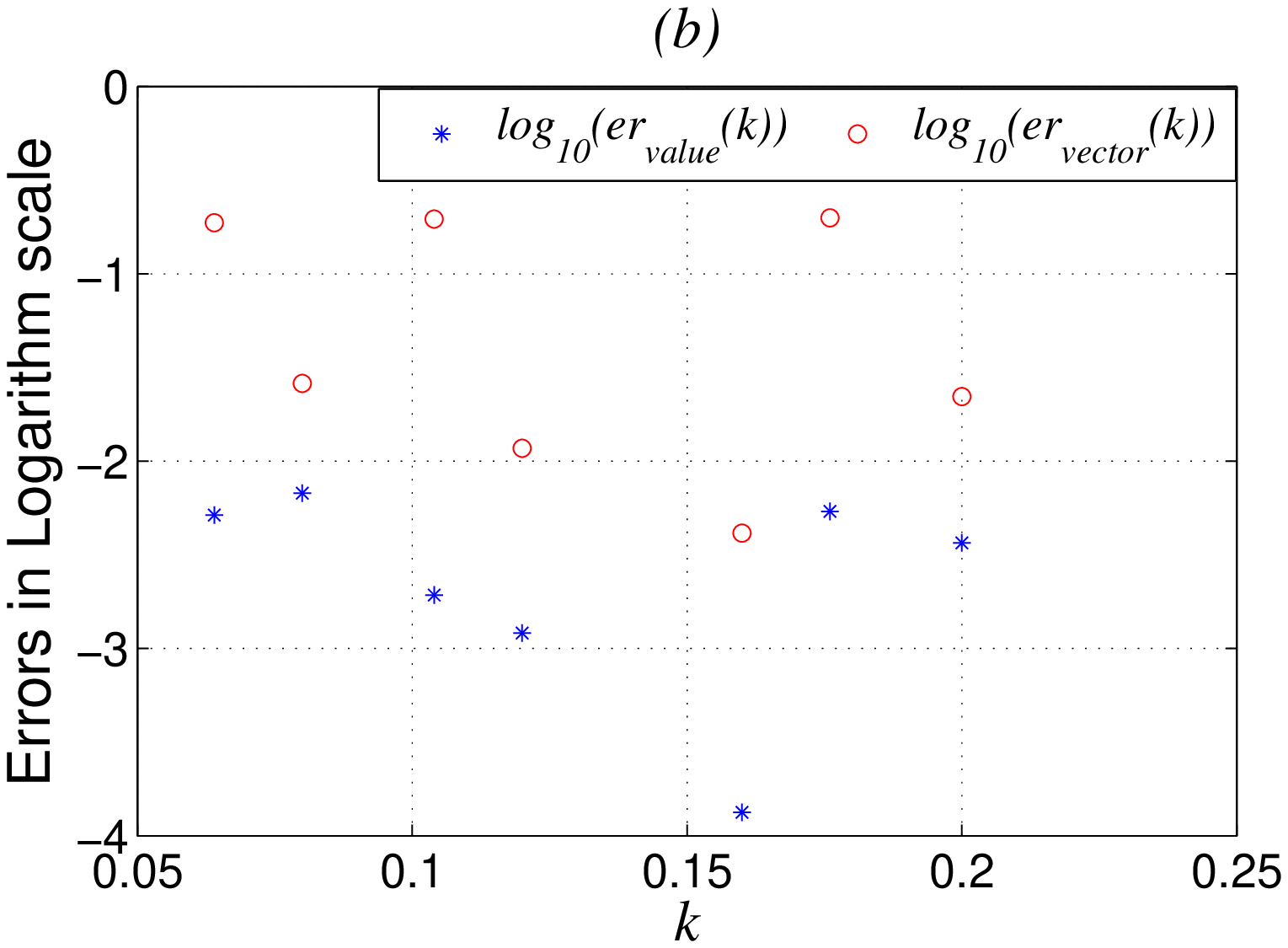}}
\end{center}
\caption{(a) Errors for $p=85$ and $k\in L_{125}^{\ast +}$. (b) Errors for a
selection of $k$ s.t.\textit{\ }$er_{vector}(k)\leq 0.2$. }
\label{m=85-eigenvector1}
\end{figure}

\begin{figure}[h]
\par
\begin{center}
\subfigure{
                 \includegraphics[width=8cm]{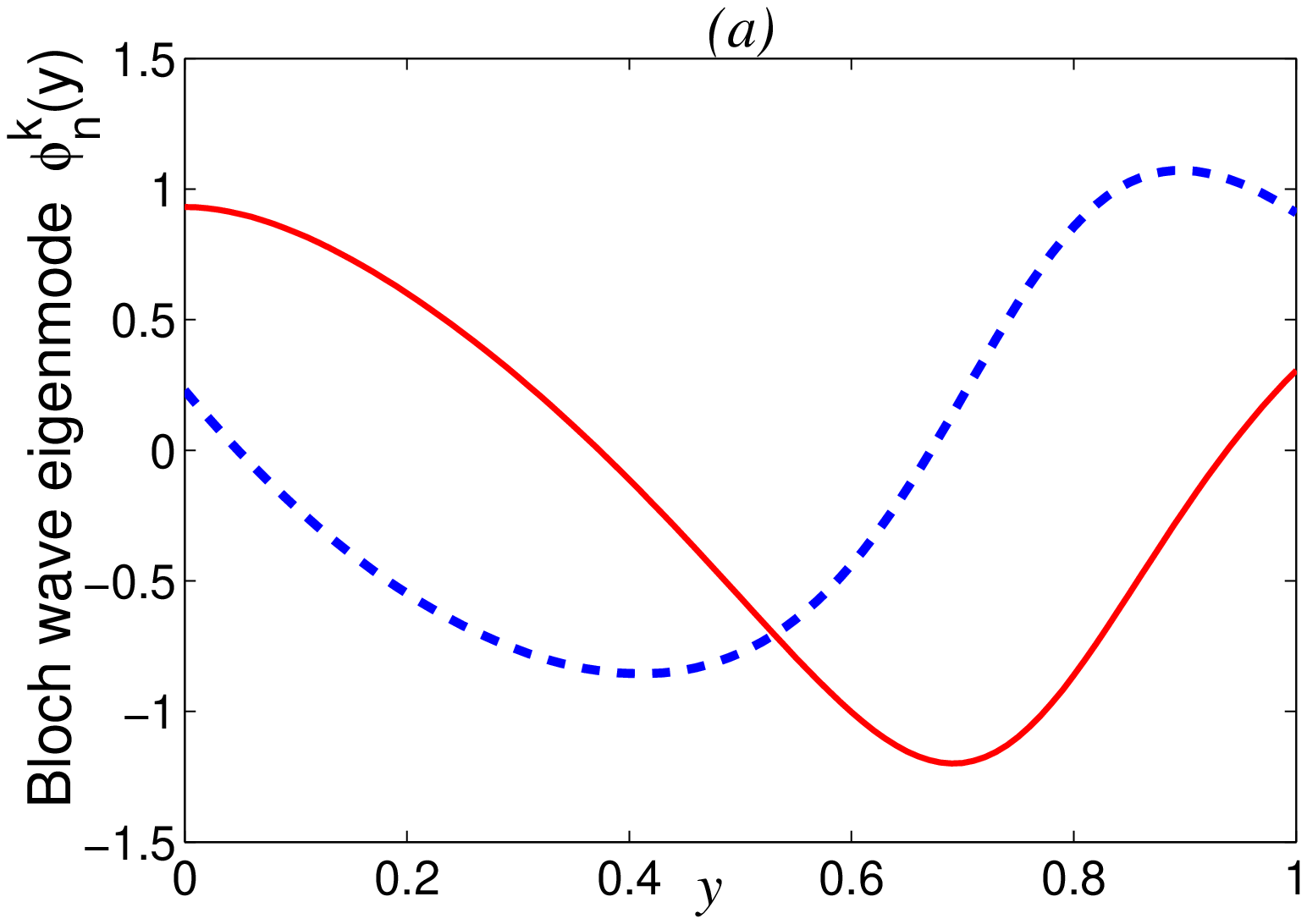}}
\subfigure{
                 \includegraphics[width=8cm]{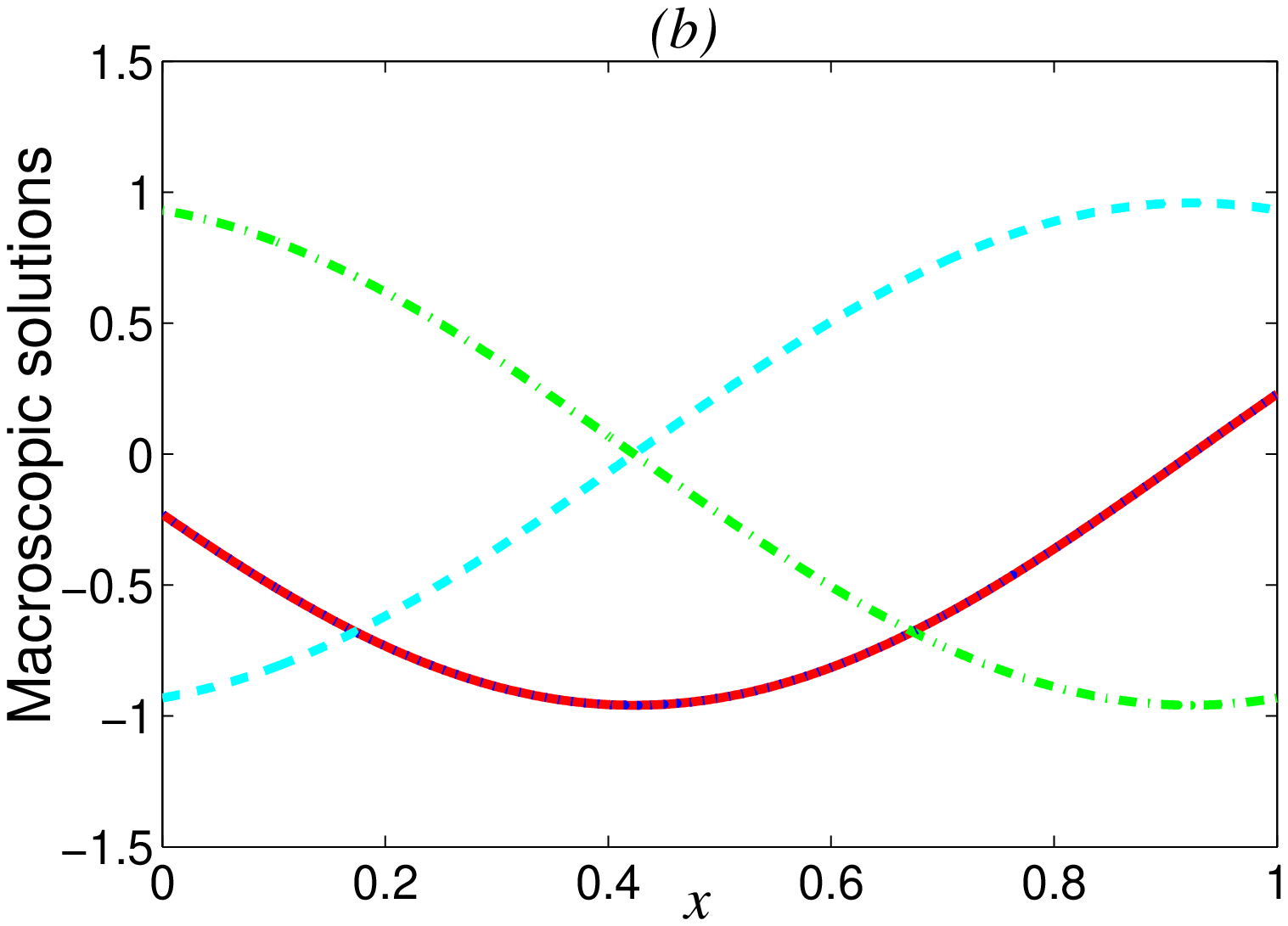}}
\end{center}
\caption{(a) Bloch wave solution $\protect\phi _{n}^{k}$. (b) Macroscopic
solutions $u_{n,\ell }^{k}$ and $u_{n,\ell }^{-k}$.}
\label{m=85}
\end{figure}
Figure \ref{m=85-eigenvector1} (a) shows the distributions of errors $%
er_{value}(k)$ and $er_{vector}(k)$ in logarithmic scale for the index $p=85$
of physical eigenmode with respect to $k$ varying in $L_{125}^{\ast +}$.%
\textbf{\ }The minimal error is reached for $k=0.16,$ $n=2,$ $\ell =17$, $%
\lambda _{n}^{k}=51.1$ and $\lambda ^{1,\ell }=58.9$ yielding the errors $%
er_{value}=10^{-4}$ and $er_{vector}=4.10^{-3}$. Figure \ref%
{m=85-eigenvector1} (b) focuses on values of $k$ such that $%
er_{vector}(k)\leq 0.2$. In Figure \ref{m=85} (a) the real (dashed line) and
the imaginary (solid line) parts of the Bloch wave $\phi _{n}^{k}$ are shown
when Figure \ref{m=85} (b) presents the real (solid line) and the imaginary
(dashed-dotted line) parts of $u_{n,\ell }^{k}$ and also the real (dotted
line) and the imaginary (dashed line) parts of $u_{n,\ell }^{-k}$. In
addition, the physical eigenmode $w_{p}^{\varepsilon }$ and the relative
error vector between $w_{p}^{\varepsilon }$ and $\psi _{n,\ell
}^{\varepsilon ,k}$ are plotted in Figure \ref{m=85-eigenvector} (a) and
(b).
\begin{figure}[h]
\par
\begin{center}
\subfigure{
                 \includegraphics[width=8cm]{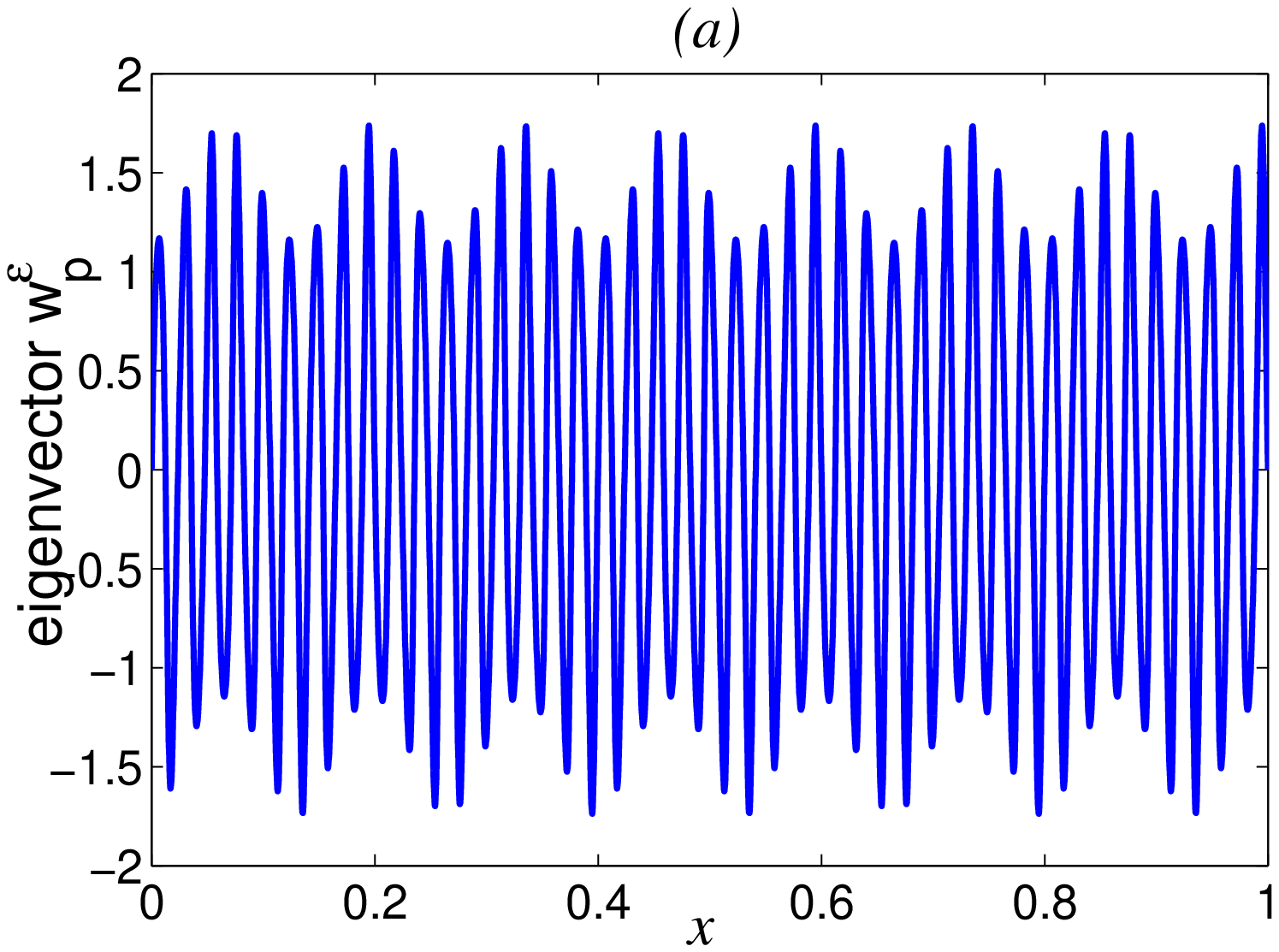}}
\subfigure{
                 \includegraphics[width=8cm]{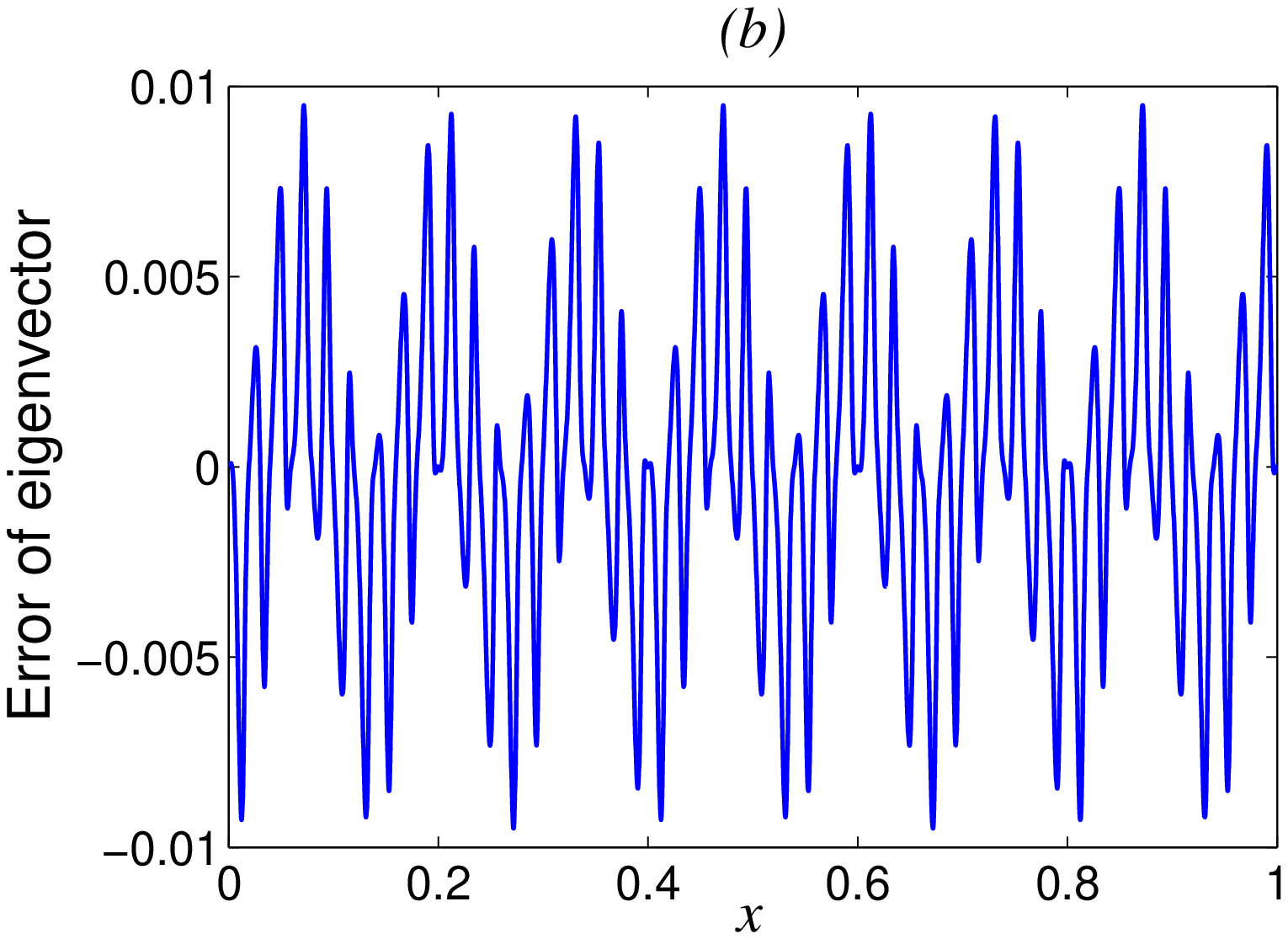}}
\end{center}
\caption{(a) Physical eigenmode $w_{p}^{\protect\varepsilon }$. (b) Relative
error between between $w_{p}^{\protect\varepsilon }$ and $\protect\psi %
_{n,\ell }^{\protect\varepsilon ,k}$. }
\label{m=85-eigenvector}
\end{figure}

\bigskip

\noindent After presenting a detailed study of the approximation of a given
physical mode, i.e. for a single physical mode index $p$, we report
approximation results for the list $\mathcal{J}_0^{\varepsilon }=\left\{
40,...,150\right\} \backslash \left\{ 50\right\} $ of consecutive physical
mode indices. The list starts at $p=40$ corresponding to an intermediary
mode between the low frequency modes approximated by the classical
homogenized method and the high frequency modes considered in this paper.
The index $p=50$ is excluded from the list since the corresponding
eigenvector is evanescent, and as such corresponds to an element of the
boundary spectrum. The previous optimization has been applied to each $p$
yielding errors plotted in logarithm scale in Figure \ref%
{errors-for-a-sequence-of-p} (a).\ The error bounds are $er_{value}\leq
6.10^{-3}$ and $er_{vector}\leq 8.10^{-2}$.

\begin{figure}[h]
\par
\begin{center}
\subfigure{
                \includegraphics[width=8cm]{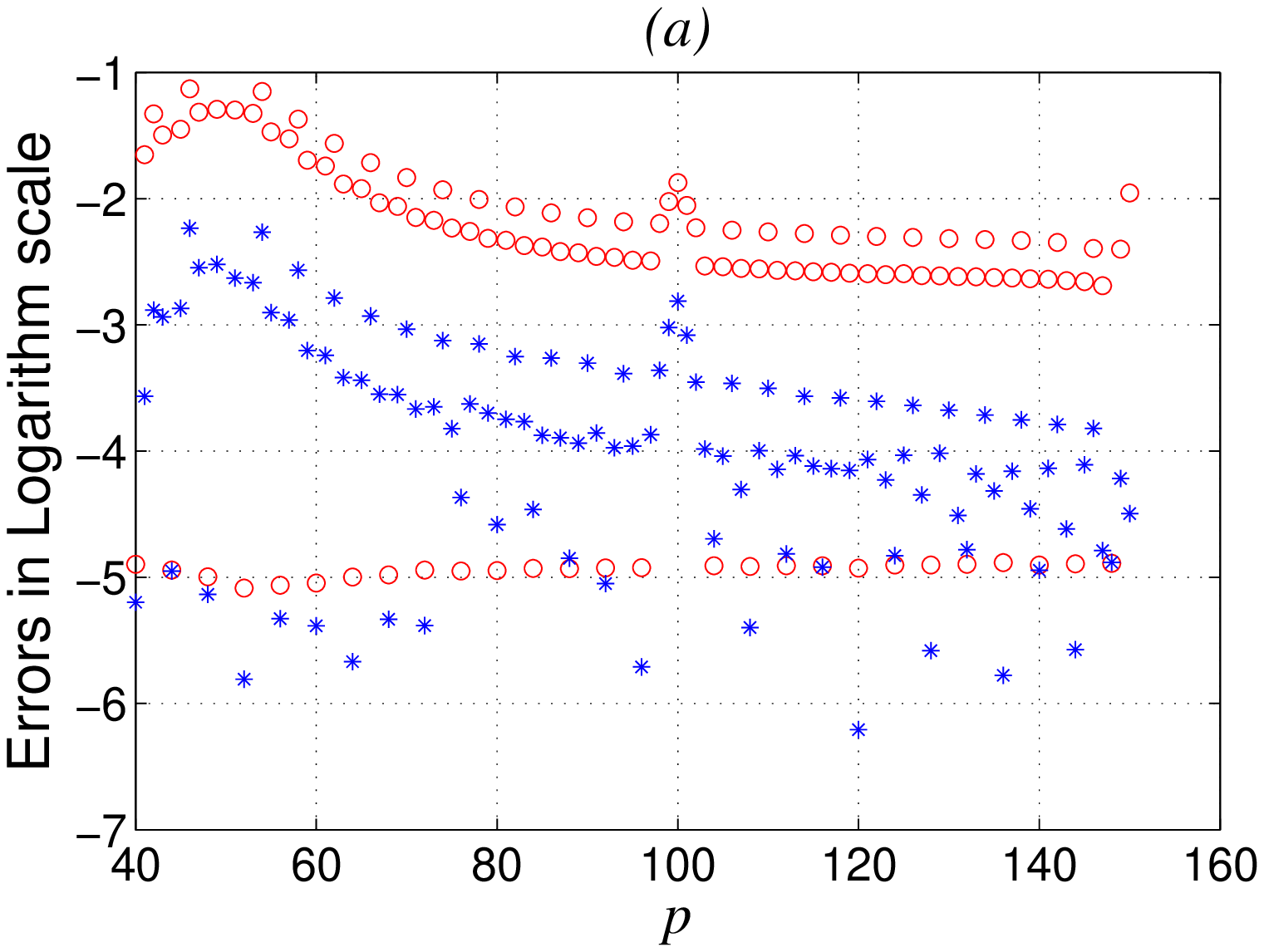}}
\subfigure{
                \includegraphics[width=8cm]{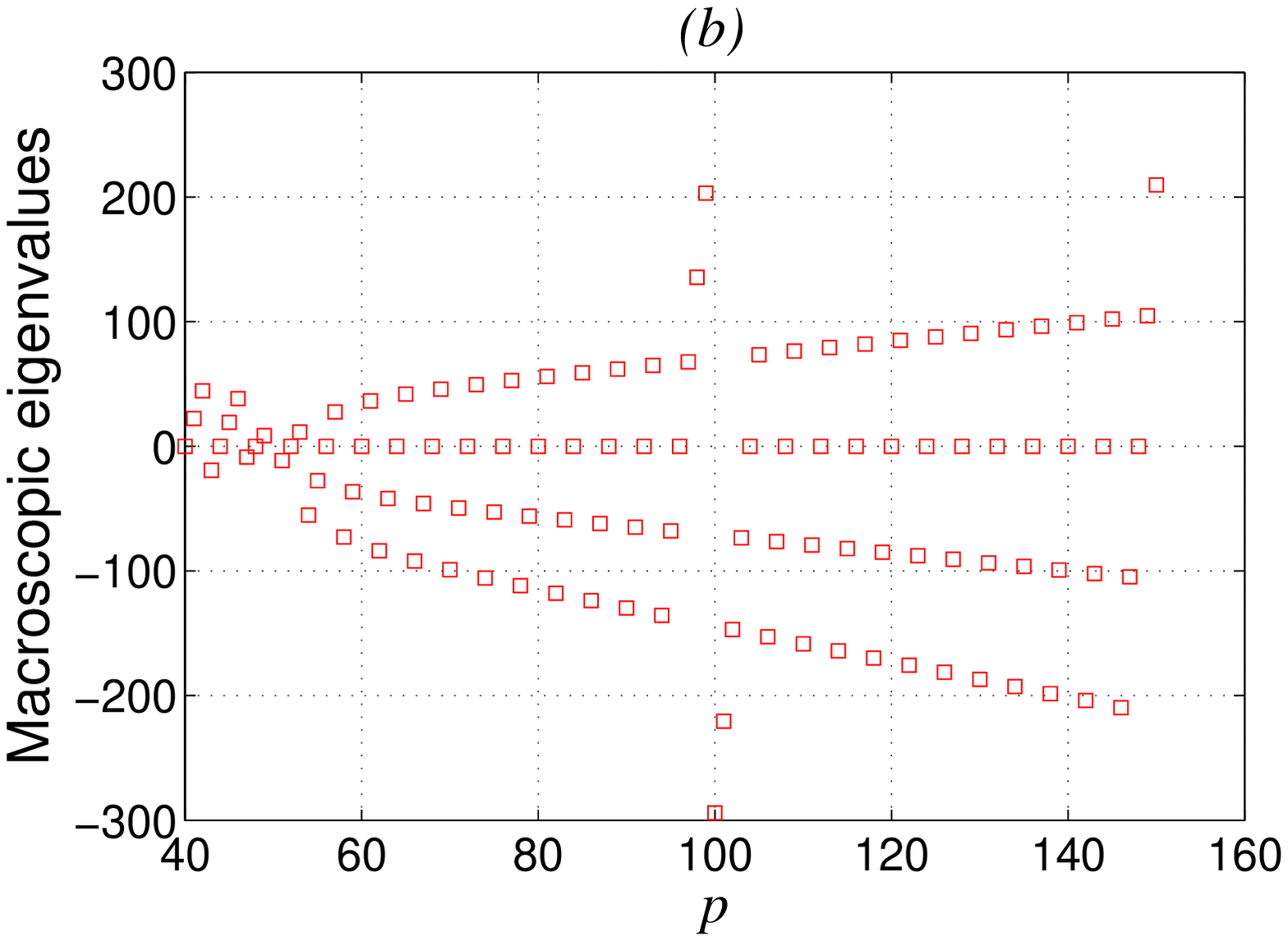}}
\end{center}
\caption{(a) Errors for $p$ varying in $\mathcal{J}_0^\protect\varepsilon$.
(b) Macroscopic eigenvalues.}
\label{errors-for-a-sequence-of-p}
\end{figure}
Globally, the errors start by growing before to decrease except around $%
p=100 $ where they exhibit a peak that we do not explain.\ Figure \ref%
{errors-for-a-sequence-of-p} (b) reports the corresponding macroscopic
eigenvalues $\lambda ^{1,\ell }$. Some of them are close to pairs $(k,n)$
such that $\lambda ^{1,\ell }$ vanishes as discussed in Remark \ref%
{exact_solution}; their relative errors on eigenvalues are in the order of $%
10^{-5}$. A way to answer the question in Remark \ref{exact_solution} is to
decrease the step $\Delta _{k}$ and see if all error decrease. A detailed
presentation is made in the table below for two indices, namely $p=66$
related to an eigenvalue in the beginning of the high frequency spectrum and
$p=102$ corresponding to one of the large errors. In both cases, the error
diminishes as the step $\Delta _{k}$ is reduced from 8e-3 to 3e-3.

\begin{center}
\begin{tabular}{|l|l|l|l|l|l|l|}
\hline
$\Delta _{k}$ & $p$ & $k$ & $n$ & $\lambda ^{1,\ell }$ & $er_{value}$ & $%
er_{vector}$ \\ \hline
8.0e-3 & 66 & 2.16e-1 & 2 & -92 & 1.2e-3 & 1.9e-2 \\ \hline
3.0e-3 & 66 & 3.4e-1 & 2 & 21.7 & 9.0e-5 & 5.3e-3 \\ \hline
8.0e-3 & 102 & 4.0e-2 & 3 & -147 & 4.0e-4 & 5.8e-3 \\ \hline
3.0e-3 & 102 & 1.5e-2 & 3 & 35.9 & 3.0e-5 & 1.4e-3 \\ \hline
\end{tabular}

Table 1: Errors for $\Delta _{k}=8.e-3$ and $3e-3$.
\end{center}

\noindent Figure \ref{improve} (a) is a global view of the errors in
logarithm scale when $\Delta _{k}=8.e-3$ for $90\leq p\leq 110$. It shows
that for this $k$-step a large part of the errors on eigenvalues is in the
range of 1.0e-5 i.e. almost the roundoff error. A measure of the error
reduction is provided in Figure \ref{improve} (b) where the two ratios%
\begin{equation*}
E_{value}=\frac{er_{value}^{\Delta _{k}=3.e-3}}{er_{value}^{\Delta
_{k}=8.e-3}}\text{ and }E_{vector}=\frac{er_{value}^{\Delta _{k}=3.e-3}}{%
er_{vector}^{\Delta _{k}=8.e-3}}
\end{equation*}%
of error reduction are represented in logarithmic scale.

\begin{figure}[h]
\par
\begin{center}
\subfigure{
                \includegraphics[width=8cm]{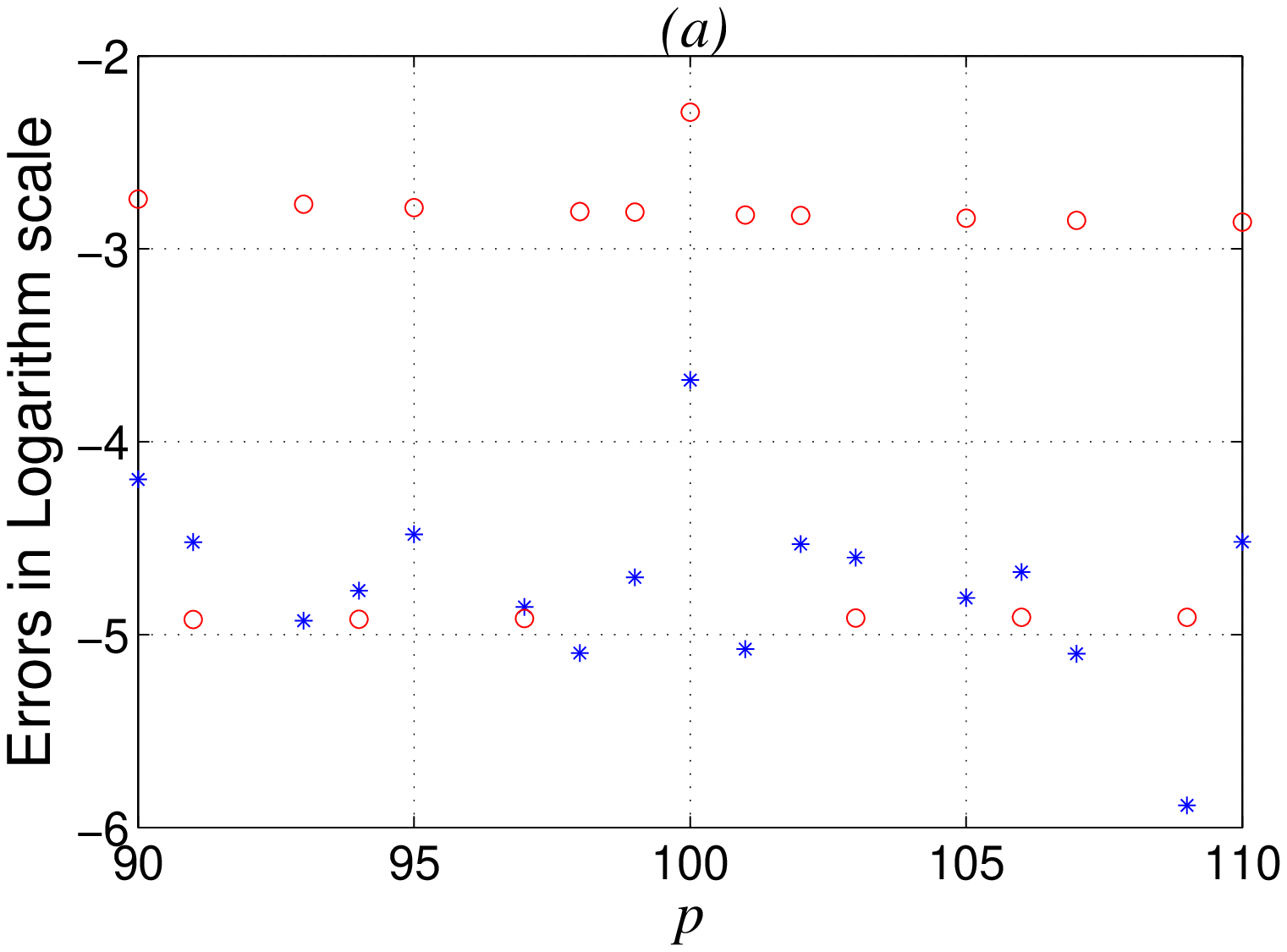}}
\subfigure{
                \includegraphics[width=8cm]{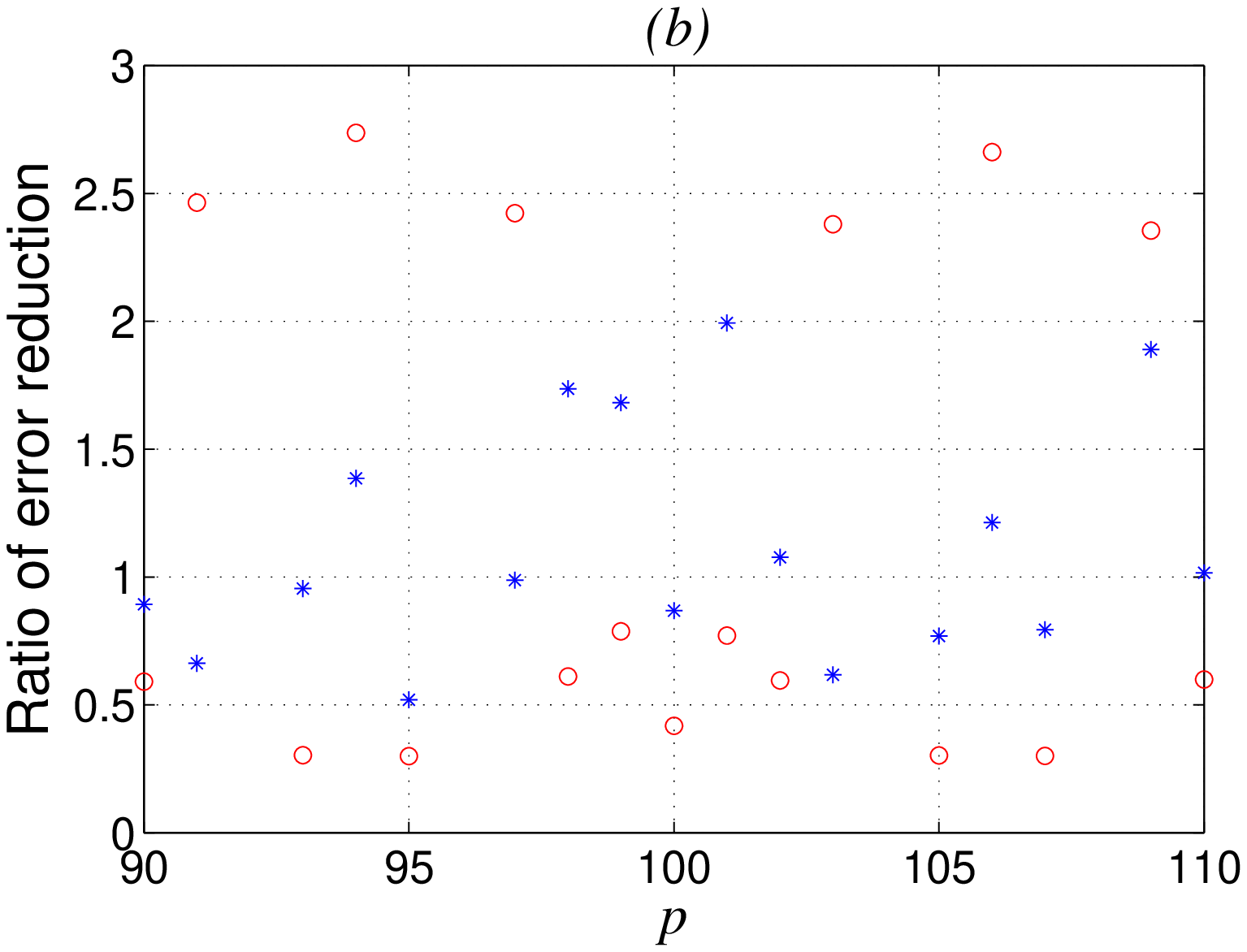}}
\end{center}
\caption{(a) Error of approximation for $\Delta _{k}=3.0e-3$. (b) Ratios $%
E_{value}$ and $E_{vector}$ of error reduction.}
\label{improve}
\end{figure}

\subsection{The modeling problem\label{problem2}}

The modeling problem is reciprocal to the previous one. It consists in
fixing a period $\varepsilon $ as well as the parameters $(k,n)$ of a Bloch
mode and to search if there exists $\ell \in J_{n}^{k}$ such that $(\gamma
_{n,\ell }^{\varepsilon ,k},\psi _{n,\ell }^{\varepsilon ,k})$ is close from
a physical mode or in other words if it is almost a solution to the physical
spectral problem i.e. if%
\begin{equation}
\varepsilon ^{2}P^{\varepsilon }\psi _{n,\ell }^{\varepsilon ,k}-\gamma
_{n,\ell }^{\varepsilon ,k}\psi _{n,\ell }^{\varepsilon ,k}=O(\varepsilon )%
\text{ in \ }\Omega .  \label{num-phys}
\end{equation}%
Posing for $\ell \in J_{n}^{k}$,%
\begin{equation}
F_{n}^{\varepsilon ,k}(\ell )=\frac{\left\Vert \varepsilon
^{2}P^{\varepsilon }\psi _{n,\ell }^{\varepsilon ,k}-\gamma _{n,\ell
}^{\varepsilon ,k}\psi _{n,\ell }^{\varepsilon ,k}\right\Vert _{L^{2}\left(
\Omega \right) }}{\left\Vert \gamma _{n,\ell }^{\varepsilon ,k}\psi _{n,\ell
}^{\varepsilon ,k}\right\Vert _{L^{2}\left( \Omega \right) }}
\label{relative_error}
\end{equation}%
the modeling problem relies to the minimization problem $F_{n}^{\varepsilon
,k}(\ell _{0})=\min\limits_{_{\ell \in J_{n}^{k}}}F_{n}^{\varepsilon
,k}(\ell )$. If the minimum is small enough, $(\gamma _{n,\ell
_{0}}^{\varepsilon ,k},\psi _{n,\ell _{0}}^{\varepsilon ,k})$ is close from
a physical eigenelement and it is a solution to the modeling problem. A
subsequent problem is to identify the corresponding physical eigenelement.
This is done be minimizing the errors $er_{value}$ and $er_{vector}$
introduced in the previous section but considered as depending on the
parameter $p\in \mathcal{J}^{\varepsilon }$ instead of $k$. Two illustrative
examples are reported in the table below, one yielding $\lambda ^{1,\ell }=0$
and the other $\lambda ^{1,\ell }\neq 0$. The solution $\psi _{n,\ell
}^{\varepsilon ,k}$ and the relative error between $\psi _{n,\ell
}^{\varepsilon ,k}$ and $w_{p}^{\varepsilon }$ are reported in Figures \ref%
{problem2-1} (a) and (b).

\begin{center}
\begin{tabular}{|l|l|l|l|l|l|l|l|}
\hline
$k$ & $n$ & $\lambda _{n}^{k}$ & $F_{n}^{\varepsilon ,k}(\ell )$ & $\lambda
^{1,\ell }$ & $p$ & $er_{value}$ & $er_{value}$ \\ \hline
1.6e-1 & 2 & 5.11e1 & 8.9e-3 & 0 & 84 & 3.4e-5 & 2.1e-5 \\ \hline
3.52e-1 & 2 & 3.14e1 & 4.5e-2 & -8.55 & 65 & 1.5e-2 & 4.3e-3 \\ \hline
\end{tabular}

Table 2: Results for the modeling problem
\end{center}

\begin{figure}[h]
\par
\begin{center}
\subfigure{
                 \includegraphics[width=8cm]{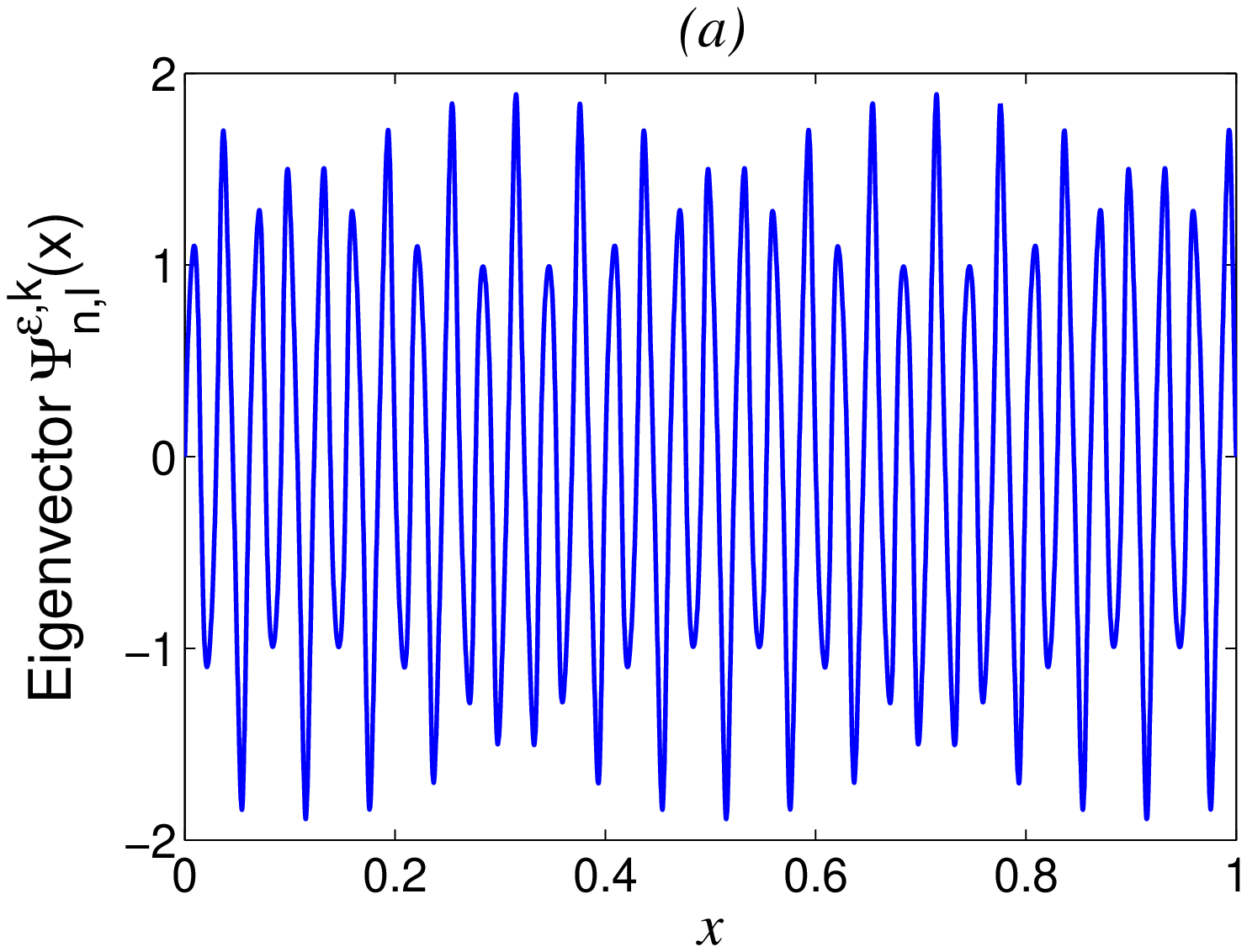}}
\subfigure{
                \includegraphics[width=8cm]{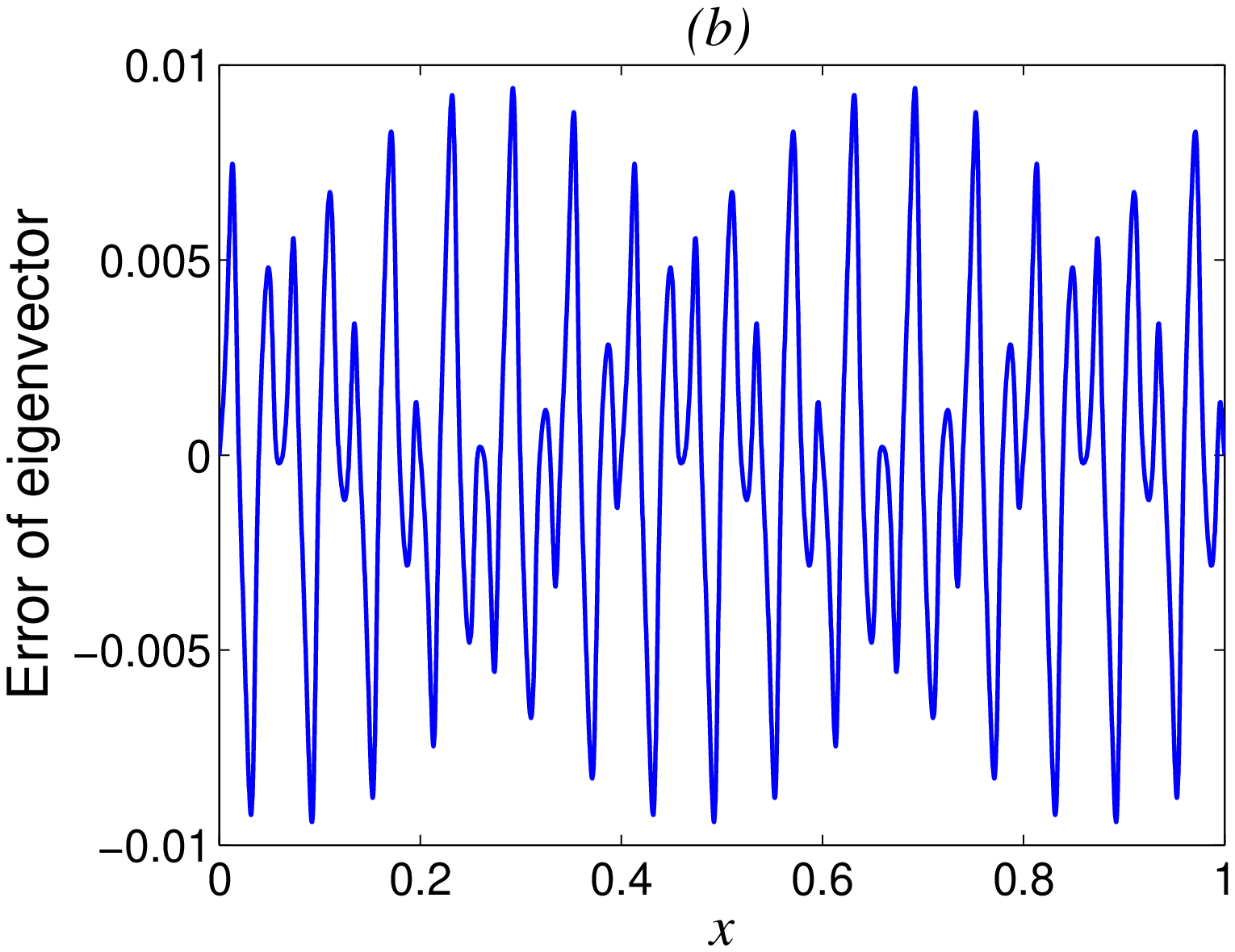}}
\end{center}
\caption{\ (a) Two-scale eigenmode $\protect\psi _{n,\ell }^{\protect%
\varepsilon ,k}$. (b) Relative error vector between $\protect\psi _{n,\ell
}^{\protect\varepsilon ,k}$ and $w_{p}^{\protect\varepsilon }$.}
\label{problem2-1}
\end{figure}
\noindent Additional results for $k=3.52e-1$ with $n=\left\{
1,...,15\right\} $ are reported in Figures \ref{problem2-3} (a) and (b)
showing $\lambda ^{1,\ell }$ and $\gamma _{n,\ell }^{k}$ respectively.

\begin{figure}[h]
\par
\begin{center}
\subfigure{
            \includegraphics[width=8cm]{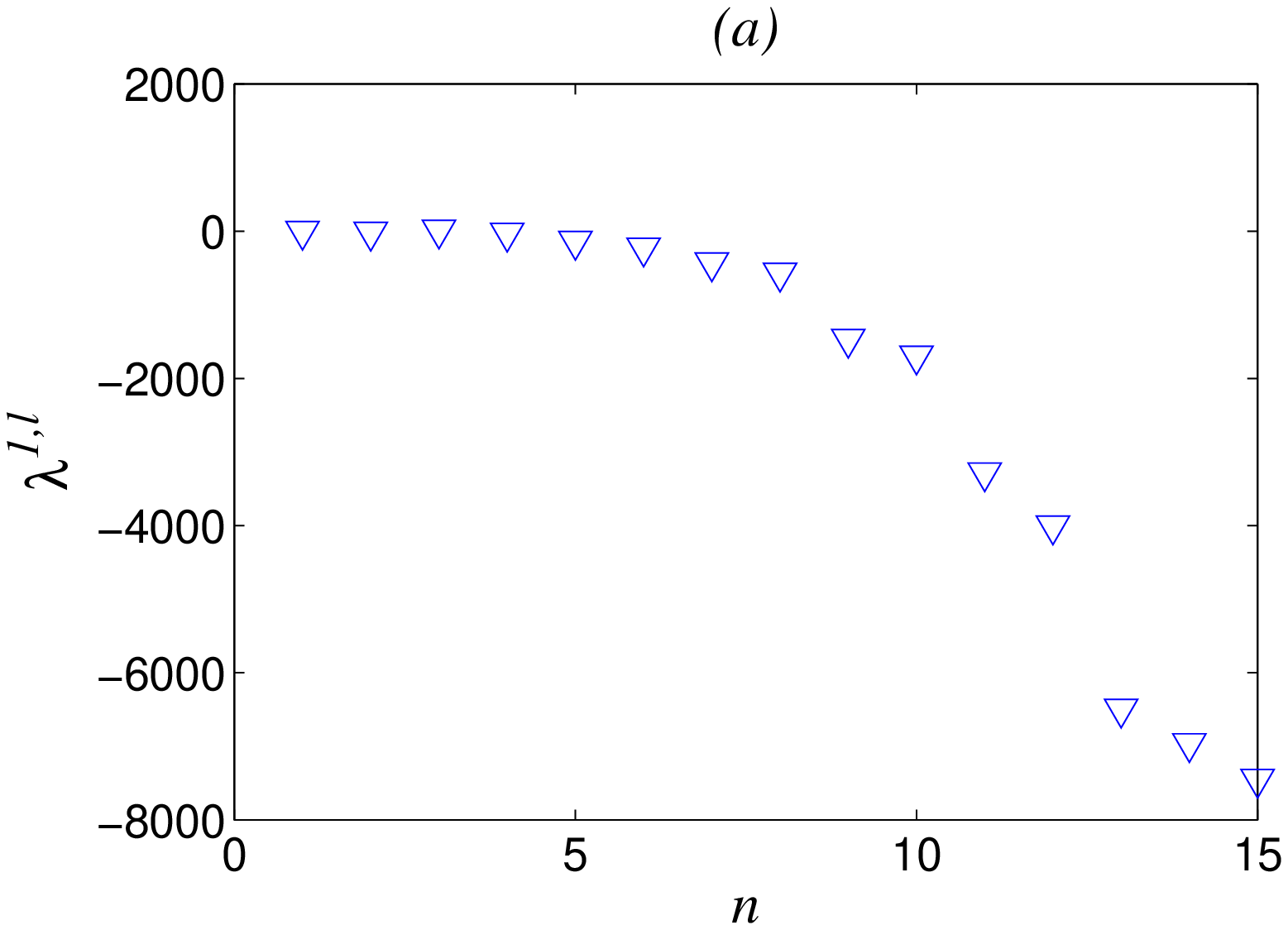}}
\subfigure{
            \includegraphics[width=8cm]{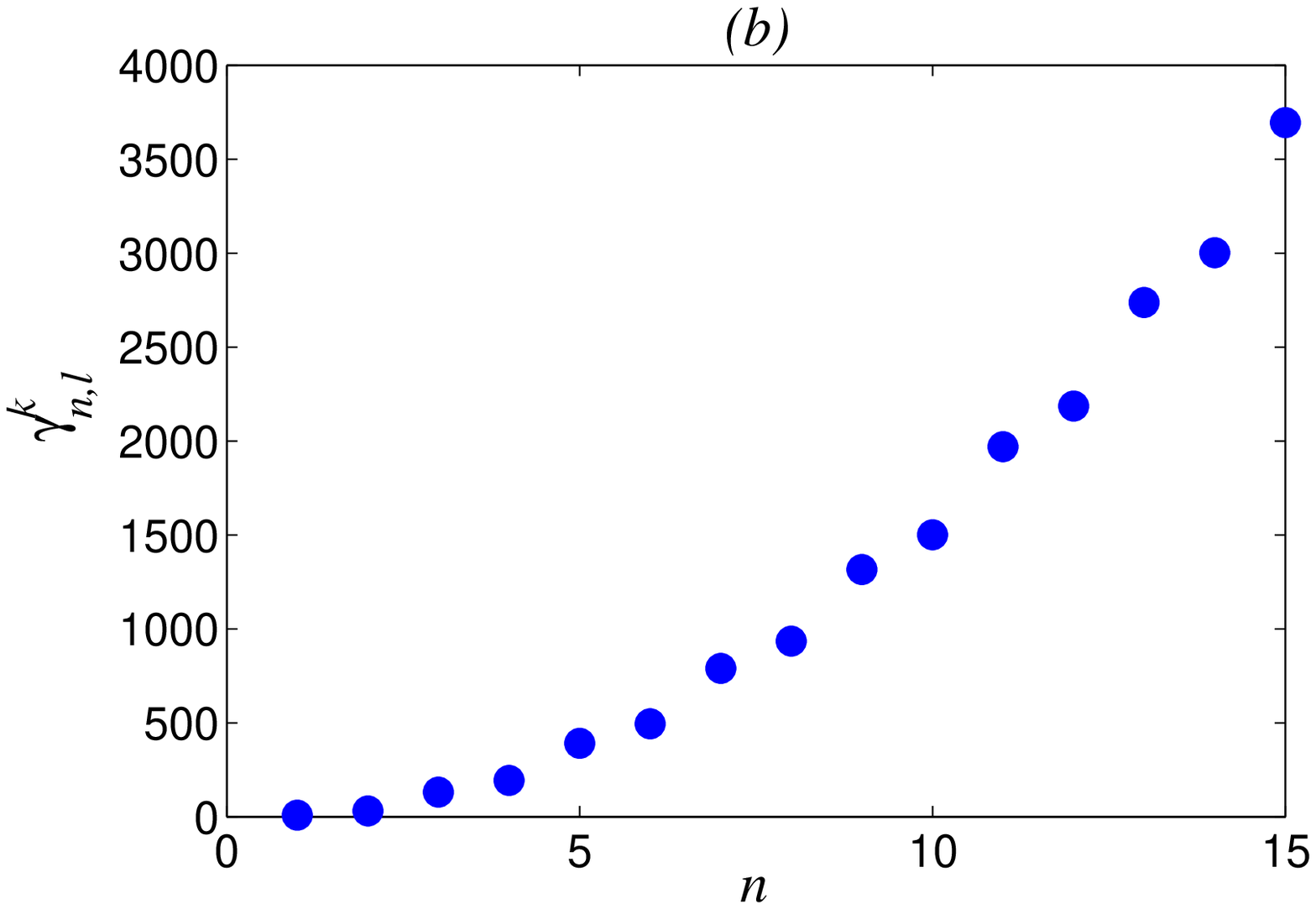}}
\end{center}
\caption{(a) $\protect\lambda ^{1,\ell }$ with respect to $n$. (b) $\protect%
\gamma _{n,\ell }^{k}$ with respect to $n$.}
\label{problem2-3}
\end{figure}

\subsection{Order of convergence\label{problem3}}

For a given pair $k$ and $n\in J^{k}$, we investigate the order of
convergence of the errors $er_{value}$ and $er_{vector}$ when the number of
cells increases. To follow the convergence result, the sequence of periods $%
\varepsilon $ is in fact a subsequence $\varepsilon _{h}$ satisfying%
\begin{equation*}
\frac{1}{\varepsilon _{h}}=\frac{h+l}{k}\in \mathbb{N}^{\ast }
\end{equation*}%
with $l\in \left[ 0,1\right) $ and for a sequence of $h\in
\mathbb{N}
^{\ast }$. Table 3 summarizes the results for $k=0.3$, $l=0.6$ and $h\in
\left\{ 3,9,15,21\right\} $.

\begin{center}
$%
\begin{tabular}{|c|c|c|c|c|}
\hline
$h$ & $\varepsilon _{h}$ & $er_{value}^{_{h,\ell }}$ & $er_{vector}^{h,l}$ &
$p$ \\ \hline
$3$ & $8.3e-2$ & $4.3e-2$ & $6.3e-3$ & $17$ \\ \hline
$9$ & $3.1e-2$ & $1.6e-2$ & $2.4e-3$ & $45$ \\ \hline
$15$ & $1.9\,1e-2$ & $1.0e-2$ & $1.5e-3$ & $73$ \\ \hline
$21$ & $1.4e-2$ & $7.0e-3$ & $1.0e-3$ & $101$ \\ \hline
\end{tabular}%
$

Table 3: Errors for a decreasing subsequence $\varepsilon _{h}$
\end{center}

\noindent To evaluate the decay rate of the errors, we pose $%
er_{value}^{_{h,\ell }}=c_{value}\left( \varepsilon _{h}\right) ^{q_{value}}$
and $er_{vector}^{_{h,\ell }}=c_{vector}\left( \varepsilon _{h}\right)
^{q_{vector}}$, so the decay rates satisfy%
\begin{equation*}
q_{value}=\frac{\log \left( er_{value}^{_{h,\ell }}/er_{value}^{_{h^{\prime
},\ell }}\right) }{\log \left( \varepsilon _{h}/\varepsilon _{h^{\prime
}}\right) }\text{ and }q_{vector}=\frac{\log \left( er_{vector}^{_{h,\ell
}}/er_{vector}^{_{h^{\prime },\ell }}\right) }{\log \left( \varepsilon
_{h}/\varepsilon _{h^{\prime }}\right) }\text{.}
\end{equation*}%
Using successive results for $h$ and $h^{\prime }$, yields%
\begin{equation*}
q_{value}=\left\{ 0.988\text{, }0.995\text{, }0.985\right\} \approx 1\text{
and }q_{vector}=\left\{ 0.985\text{, }0.993\text{, }0.994\right\} \approx 1
\end{equation*}%
with coefficients%
\begin{equation*}
c_{value}=\left\{ 0.504\text{, }0.518\text{, }0.497\right\} \approx 0.5\text{
and }c_{vector}=\left\{ 0.0734\text{, }0.0755\text{, }0.0757\right\} \approx
0.07\text{.}
\end{equation*}

\bibliographystyle{plain}
\bibliography{Bloch_wave}

\end{document}